\newcommand\Vector[1]{\mathbf{#1}}
\newcommand\va{{\Vector{a}}}
\newcommand\vb{{\Vector{b}}}
\newcommand\ve{{\Vector{e}}}
\newcommand\vg{{\Vector{g}}}
\newcommand\vs{{\Vector{s}}}
\newcommand\vu{{\Vector{u}}}
\newcommand\vv{{\Vector{v}}}
\newcommand\vw{{\Vector{w}}}
\newcommand\vx{{\Vector{x}}}
\newcommand\vy{{\Vector{y}}}
\newcommand\vz{{\Vector{z}}}
\newcommand\MATRIX[1]{\mathbf{#1}}
\newcommand\mA{{\MATRIX{A}}}
\newcommand\mH{{\MATRIX{H}}}
\newcommand\mI{{\MATRIX{I}}}
\newcommand\mP{{\MATRIX{P}}}
\newcommand\mV{{\MATRIX{V}}}
\newcommand\bigO{\mathcal{O}}
\DeclareMathOperator*{\E}{\mathbb{E}}
\newcommand\op{{\mathrm{op}}}
\DeclareMathOperator*{\argmin}{arg\,min}
\newcommand{\reals}{\mathbb{R}}
\newcommand{\calO}{\mathcal{O}}
\newcommand{\calV}{\mathcal{V}}
\newcommand{\calK}{\mathcal{K}}
\newtheorem{assumption}{Assumption}
\newtheorem{theorem}{Theorem}
\newtheorem{lemma}[theorem]{Lemma}
\newtheorem{proposition}[theorem]{Proposition}
\newtheorem{remark}{Remark}
\numberwithin{assumption}{section}
\numberwithin{definition}{section}
\numberwithin{theorem}{section}
\numberwithin{remark}{section}
\begin{document}

\title{Krylov Cubic Regularized Newton: {A Subspace Second-Order Method with Dimension-Free Convergence Rate}\let\thefootnote\relax\footnotetext{The work of RJ was partially done while interning at Amazon Web Services. SS, MH, and VC hold concurrent appointments as an Amazon Scholar and as a faculty at Technion, University of Minnesota, and EPFL, respectively. This paper describes their work performed at Amazon.}}

\author{Ruichen Jiang\thanks{Department of Electrical and Computer Engineering, The University of Texas at Austin\ \{rjiang@utexas.edu, mokhtari@austin.utexas.edu\}} 
\and 
        Parameswaran Raman\thanks{Amazon Web Services \ \{prraman@amazon.com\}}%
\and 
        Shoham Sabach\thanks{Faculty of Data and Decision Sciences, Technion -- Israel Institute of Technology \ \{ssabach@technion.ac.il\}}
\and
        Aryan Mokhtari$^*$
\and 
        Mingyi Hong\thanks{Department of Electrical and Computer Engineering, University of Minnesota \ \{mhong@umn.edu\}}
\and    
        Volkan Cevher\thanks{LIONS, IEM, STI, Ecole Polytechnique Fédérale de Lausanne \ \{volkan.cevher@epfl.ch\}}
}

\date{}

\maketitle

\begin{abstract}  
Second-order optimization methods, such as cubic regularized Newton methods, are known for their rapid convergence rates; nevertheless, they become impractical in high-dimensional problems due to their substantial memory requirements and computational costs. One promising approach is to execute second-order updates within a lower-dimensional subspace, giving rise to \textit{subspace second-order} methods. However, the majority of existing subspace second-order methods randomly select subspaces, consequently resulting in slower convergence rates depending on the problem's dimension  $d$. 
In this paper, we introduce a novel subspace cubic regularized Newton method that achieves a dimension-independent global convergence rate of $\bigO\left(\frac{1}{mk}+\frac{1}{k^2}\right)$ for solving convex optimization problems. Here, $m$ represents the subspace dimension, which can be significantly smaller than $d$. Instead of adopting a random subspace, our primary innovation involves performing the cubic regularized Newton update within the \emph{Krylov subspace} associated with the Hessian and the gradient of the objective function. This result marks the first instance of a dimension-independent convergence rate for a subspace second-order method. Furthermore, when specific spectral conditions of the Hessian are met, our method recovers the convergence rate of a full-dimensional cubic regularized Newton method.   Numerical experiments show our method converges faster than existing random subspace methods, especially for high-dimensional problems.
\end{abstract}

\newpage
\newpage

\section{Introduction}

In this paper, we consider the following unconstrained minimization problem 
\begin{equation*}
  \min_{\vx \in \mathbb{R}^d} f(\vx),
\end{equation*}
where $f: \reals^d \rightarrow \reals$ is convex and twice continuously  differentiable.
We focus on the use of second-order methods for solving this problem, particularly the Cubic Regularized Newton (CRN) method \citep{griewank1981modification,nesterov2006cubic}. %
The CRN method stands out for its fast convergence rate. Specifically, when dealing with convex functions, it exhibits a global convergence rate of $\mathcal{O}(1/k^2)$, with $k$ denoting the number of iterations. Furthermore, in cases where $f$ is strongly convex, it attains a superlinear convergence rate~{\citep{nesterov2006cubic}}. Nevertheless, the main drawback associated with the CRN method is its substantial computational cost per iteration, particularly when the problem's dimensionality $d$ is high, leading to unfavorable scaling.
 This arises from the necessity to solve a cubic subproblem at each iteration, demanding a minimum of $\calO(d^3)$ arithmetic operations. As a result, CRN becomes impractical for optimization problems with high dimensions, a common scenario in modern machine learning applications.

{To reduce the computational cost, \citet{Hanzely2020} proposed the stochastic subspace cubic Newton (SSCN) method, which can be regarded as a subspace variant of CRN.   
Inspired by first-order coordinate descent methods~\citep{luo1992convergence,nesterov2012efficiency,wright2015coordinate}, their key proposition is to solve the cubic subproblem over a random low-dimensional subspace of dimension $m \ll d$, instead of the full-dimensional space $\mathbb{R}^d$. As a result, this strategy effectively reduces the dimension of the cubic subproblem to $m$, and thus it can be solved efficiently using $\bigO(m^3)$ arithmetic operations. However, this efficiency comes at a cost: SSCN suffers from a slower convergence rate of $\bigO\left(\frac{d-m}{m}\cdot\frac{1}{k}+ \left(\frac{d}{m}\right)^2 \cdot \frac{1}{k^2}\right)$ that scales with the problem's dimension $d$. Additionally, formulating the low-dimensional cubic subproblem requires computing the gradient and the Hessian of the objective over the chosen subspace, which also needs to be taken into consideration. 
In certain special cases, such as generalized linear models, they can be computed with a complexity of $\calO(m)$ and $\calO(m^2)$, respectively, as demonstrated by \citep{Hanzely2020}. In this case, the dominant cost comes from solving the cubic subproblem, leading to an overall complexity of $\calO(m^3)$. In Table~\ref{tab:second_order}, we report the per-iteration cost of SSCN for such favorable cases.
In general, however, the cost of computing the subspace gradient and the subspace Hessian could be dependent on the problem's dimension~$d$. {For instance,  \citet{gower2019rsn} proposed computing the subspace Hessian via $m$ back-propagation passes. Since each back-propagation requires $\calO(d)$ arithmetic operations, in this case the per-iteration cost could be $\bigO(md)$.  }%

Given the discussions above, we are motivated by the following question: ``\textit{Can we improve the dimensional dependence of subspace second-order methods?}''  Intuitively, this problem stems from the fact that the subspace is chosen uniformly random at each iteration, oblivious to the objective function we aim to optimize. As a result, such a random subspace is unlikely to contain a ``good'' descent direction of the objective, hindering the convergence of the subspace method. 
In this paper, we argue that employing a subspace customized to the local geometry of the objective yields a convergence rate that is independent of the dimensionality.
Specifically, we propose the Krylov CRN method, where we perform the CRN update over the \emph{Krylov subspace} associated with the Hessian and the gradient of the objective function. 
Our contributions are summarized as follows: 
\begin{itemize}
    \item In the convex case, we prove a dimension-free convergence rate of $\calO(\frac{1}{mk}+\frac{1}{k^2})$ for our proposed method, where $m$ is the subspace dimension and $k$ is the number of iterations. 
    In particular, we shave a factor of $\calO(d)$ from the iteration complexity of SSCN when $m \ll d$ (see Table~\ref{tab:second_order}). {Additionally, in the strongly convex case, our proposed method achieves a linear rate of convergence, and we again shave a $\calO(d)$ factor from the iteration complexity of SSCN when $m \ll d$.}
    Furthermore, we show that our method can be implemented using the Lanczos method, requiring one gradient evaluation and $m$ Hessian-vector products per iteration, and the resulting cubic subproblem can be solved using $\calO(m)$ arithmetic operations. {{Hence, in the worst case, the per-iteration cost of our method is $\calO(md)$, resulting from the computation of $m$ Hessian-vector products.}}

    \item We show that our method is capable of exploiting the spectral structure of the objective's Hessian. Specifically, we characterize the convergence rate of our method in terms of a spectral quantity of the objective's Hessian (cf.~Theorem~\ref{thm:main}), which could lead to faster convergence rates when the Hessian possesses some structure. 
    For instance, if the Hessian has at most $m$ distinct eigenvalues, then our method recovers the $\bigO(1/k^{2})$ rate of the full-dimensional CRN method.

    \item To demonstrate the efficacy of our algorithm, we perform numerical experiments on high-dimensional logistic regression problems, as exemplified by Figure~\ref{fig:intro}. Specifically, we observe from Figure~\subref{fig:intro_a} that our proposed Krylov CRN method {makes much more progress than SSCN in each iteration and closely follows the loss curve of the full-dimensional CRN, even with a modest subspace dimension of $m=10$. Moreover, given the same amount of computation time, Figure~\subref{fig:intro_b} shows that} our proposed method can converge much faster than both SSCN and the full-dimensional CRN.   
\end{itemize}

\begin{table}[t!]\scriptsize
  \renewcommand{\arraystretch}{1.0}
  \centering
  \caption{The comparison of CRN, SSCN, and our method in terms of per-iteration cost and convergence rate.  $^*$We assume that 
  the cost of computing the subspace gradient and the subspace Hessian is $\calO(m)$ and $\calO(m^2)$, respectively. $^{**}$We assume that the cost of Hessian-vector product evaluations is $\calO(d)$.} \vspace{1mm}
  \label{tab:second_order}
   {%
      \begin{tabular}{cccc}
          \toprule
{\qquad Methods}   & Per-iteration cost    & Convergence rate    \\ \midrule 
{CRN \citep{nesterov2006cubic}} & 
$\calO(d^3)$
& $\calO(\frac{1}{k^2})$  \\ \midrule
SSCN \citep{Hanzely2020} &   {$\calO( m^3)$$^*$}    & $\calO(\frac{d-m}{m}\cdot\frac{1}{k}+\frac{d^2}{m^2}\cdot\frac{1}{k^2})$ \\ \midrule
\textbf{Krylov CRN ({ours})} & $\calO(md)^{**}$
& $\calO(\frac{1}{mk} + \frac{1}{k^2})$  \\ \bottomrule
      \end{tabular}%
  }%
\end{table}

\noindent\textbf{Additional related work.} %
The idea of using the Krylov subspace method for solving the subproblem in CRN has been previously explored by \citet{cartis2011adaptive} and \citet{carmon2018analysis}. However, there are two key distinctions between their work and ours. Firstly, they focus on solving nonconvex optimization problems and establish results for finding approximate stationary points, whereas our focus is on the convex and strongly convex settings. Secondly, and more importantly, their analysis requires the Krylov subspace solution to be an approximate minimizer of the full-dimensional cubic subproblem, and hence the subspace dimension $m$ needs to depend on the final target accuracy $\epsilon$. 
{{Consequently, to attain a final accuracy of $\epsilon$, they require the subspace dimension to be $m = \tilde{\calO}(\epsilon^{-\frac{1}{4}})$ \citep{Carmon2020}.}}
In contrast, our results hold for any constant value of $m$, which can be independent of the iteration index $k$ or the target accuracy $\epsilon$.

In addition to CRN, another classical second-order method is the damped Newton's method, and its subspace variants have also been considered by \citet{gower2019rsn,hanzely2023sketch}. Since their convergence rates are shown under a different set of assumptions, their results are not directly comparable with ours. In addition, we note that their convergence rates suffer from the same issue of dimensional dependence as they also rely on random subspaces. 

\section{Preliminaries}\label{sec:prelim}
\vspace{-.2em}
\begin{figure}[!t]
\vspace{-1em}
    \centering
    \subfloat[]
    {\includegraphics[width=0.45\linewidth]{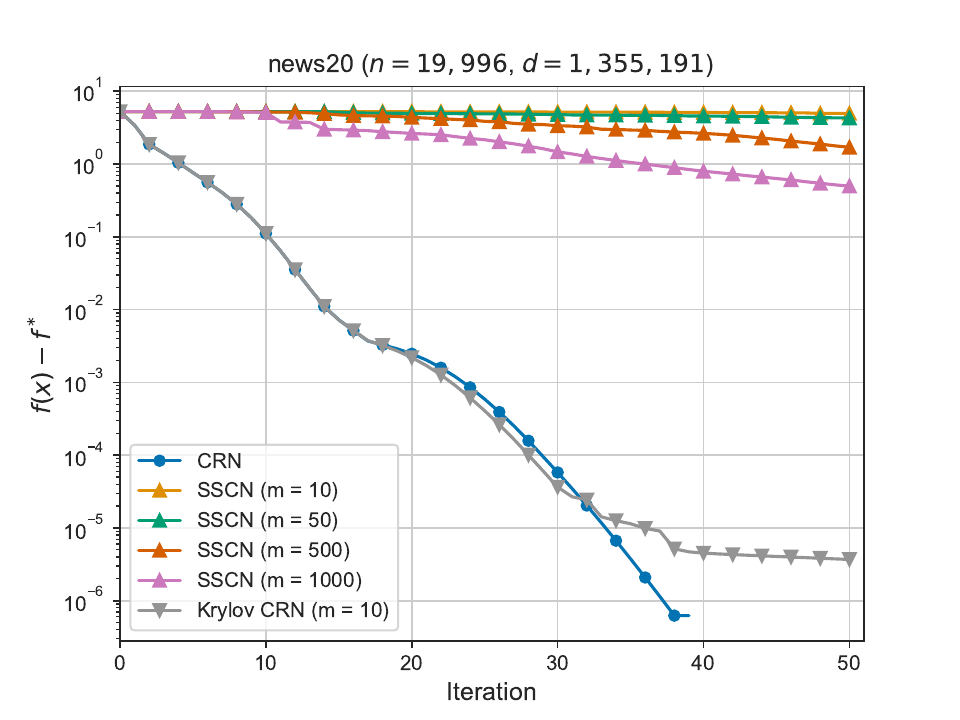}\label{fig:intro_a}}
    \subfloat[]
    {\includegraphics[width=0.45\linewidth]{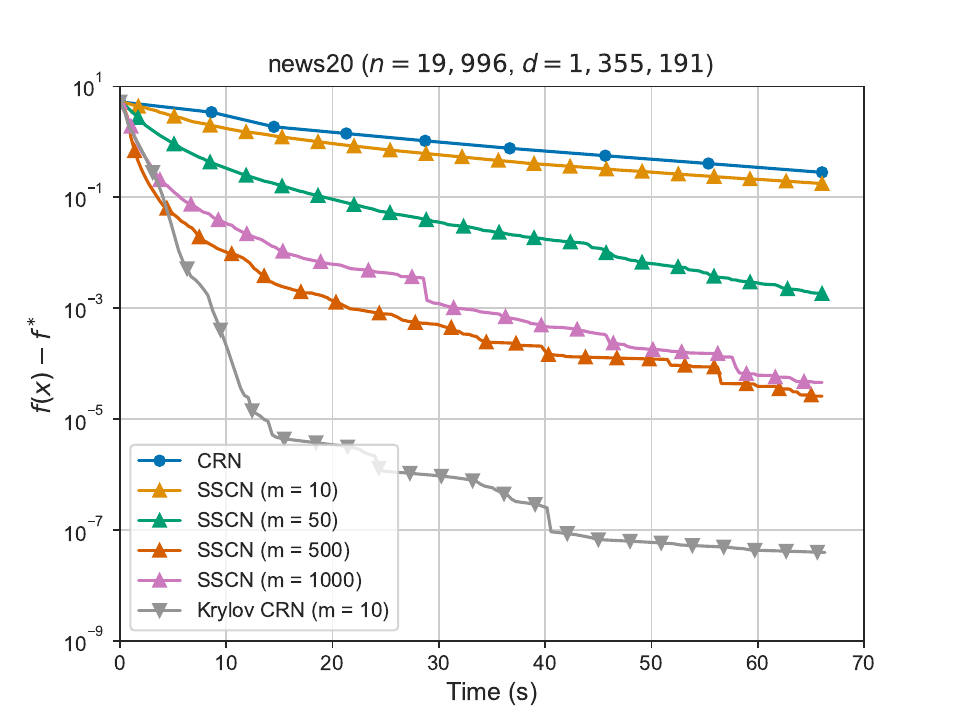}\label{fig:intro_b}}
    \caption{Numerical comparisons of CRN, SSCN, and our proposed Krylov CRN method on a logistic regression problem. Here, $m$ denotes the dimension of the subspace, $n$ is the number of samples, and $d$ is the number of parameters. See Section~\ref{sec:numerical} for full details.}
    \label{fig:intro}
    \vspace{-.3em}
\end{figure}

In this section, we first introduce the necessary assumptions and highlight the auxiliary results arising from these conditions. Then, in Section~\ref{subsec:subspace_newton}, we discuss the original cubic regularized Newton (CRN) method as well as its subspace variants studied in the literature.
Note that our required assumptions are all common in the analysis of CRN-type methods~\citep{nesterov2006cubic}. 
\begin{assumption}\label{assum:convex}
  The function $f: \reals^d \rightarrow \reals$ is convex. %
\end{assumption}

\begin{assumption}\label{assum:bounded_sublevel}
    The function $f$ is bounded from below and has bounded level-sets. %

\end{assumption}

\begin{assumption}\label{assum:hessian}
  The Hessian $\nabla^2 f$ is $L_2$-Lipschitz: $\|\nabla^2 f(\vx) - \nabla^2 f(\vy) \| \leq L_2 \|\vx-\vy\|$, for all $\vx,\vy \in \mathbb{R}^{d}$. 
\end{assumption}

Utilizing conventional arguments (see, e.g., \cite[Lemma 1.2.4]{Nesterov2018}), one key implication of  Assumption~\ref{assum:hessian} is that we can control the difference between the function $f(\vx)$ and its quadratic approximation, as demonstrated in the following proposition.

\begin{proposition}\label{prop:taylor}
  If Assumption \ref{assum:hessian} holds, then for any $\vx,\vx' \in \mathbb{R}^d$, we have $$\left|f(\vx)-f(\vx')-\nabla f(\vx')^\top (\vx-\vx')-\frac{1}{2} (\vx-\vx')^\top \nabla^2 f(\vx')(\vx-\vx')\right| \leq \frac{L_2}{6}\|\vx-\vx'\|^3.$$ 
\end{proposition}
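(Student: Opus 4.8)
The plan is to reduce everything to a one-dimensional argument along the segment joining $\vx'$ to $\vx$, and then invoke the Lipschitz bound on the Hessian inside an integral remainder. First I would set $\vh \coloneqq \vx - \vx'$ and introduce the scalar function $t \mapsto f(\vx' + t\vh)$ on $[0,1]$. Applying the fundamental theorem of calculus once gives $f(\vx) - f(\vx') = \int_0^1 \nabla f(\vx' + t\vh)^\top \vh \, dt$, and applying it again to $\nabla f(\vx' + t\vh)$ yields $\nabla f(\vx' + t\vh) = \nabla f(\vx') + \int_0^t \nabla^2 f(\vx' + s\vh)\,\vh \, ds$. Substituting the second expression into the first produces the representation
\begin{equation*}
  f(\vx) - f(\vx') = \nabla f(\vx')^\top \vh + \int_0^1 \int_0^t \vh^\top \nabla^2 f(\vx' + s\vh)\,\vh \, ds \, dt.
\end{equation*}

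The key bookkeeping step is to recognize that the quadratic term of the Taylor expansion admits the identical double-integral form with a frozen Hessian. Since $\int_0^1 \int_0^t 1 \, ds \, dt = \tfrac{1}{2}$, I can write $\tfrac{1}{2}\vh^\top \nabla^2 f(\vx')\,\vh = \int_0^1 \int_0^t \vh^\top \nabla^2 f(\vx')\,\vh \, ds \, dt$. Subtracting this from the representation above collapses the left-hand side of the proposition into a single double integral of the Hessian \emph{difference}:
\begin{equation*}
  f(\vx) - f(\vx') - \nabla f(\vx')^\top \vh - \tfrac{1}{2}\vh^\top \nabla^2 f(\vx')\,\vh = \int_0^1 \int_0^t \vh^\top \bigl[\nabla^2 f(\vx' + s\vh) - \nabla^2 f(\vx')\bigr]\vh \, ds \, dt.
\end{equation*}

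The final step is purely a bound. I would pass the absolute value inside the integral, then control the integrand by the operator norm: $\bigl|\vh^\top[\nabla^2 f(\vx'+s\vh) - \nabla^2 f(\vx')]\vh\bigr| \le \|\nabla^2 f(\vx'+s\vh) - \nabla^2 f(\vx')\| \, \|\vh\|^2$. Here Assumption~\ref{assum:hessian} supplies $\|\nabla^2 f(\vx'+s\vh) - \nabla^2 f(\vx')\| \le L_2 \|s\vh\| = L_2 s \|\vh\|$, so the integrand is at most $L_2 s \|\vh\|^3$. Evaluating $\int_0^1 \int_0^t L_2 s \|\vh\|^3 \, ds \, dt = L_2 \|\vh\|^3 \int_0^1 \tfrac{t^2}{2}\,dt = \tfrac{L_2}{6}\|\vh\|^3$ delivers exactly the claimed constant.

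I do not anticipate a genuine obstacle here; this is the classical integral-remainder argument (Nesterov's Lemma~1.2.4). The only points requiring care are algebraic: getting the nested limits of the double integral right so that the constant $\tfrac{1}{6}$ emerges cleanly, and ensuring the quadratic-form bound uses the operator (spectral) norm that matches the Lipschitz hypothesis in Assumption~\ref{assum:hessian}. If one prefers to avoid the double integral, an equivalent route is to define $\phi(t) = f(\vx' + t\vh) - f(\vx') - t\,\nabla f(\vx')^\top\vh - \tfrac{t^2}{2}\vh^\top\nabla^2 f(\vx')\vh$, bound $|\phi''(t)| \le L_2 t \|\vh\|^3$ directly from the Lipschitz property, and integrate twice from $0$ to $1$ using $\phi(0)=\phi'(0)=0$; this yields the same estimate with identical effort.
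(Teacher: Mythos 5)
Your argument is correct and is exactly the ``conventional argument'' the paper invokes by citing \citet[Lemma 1.2.4]{Nesterov2018}: the double-integral (or equivalently, twice-integrated scalar function) remainder representation, followed by the operator-norm bound from the Hessian Lipschitz condition, with the constant $\tfrac{1}{6}$ arising from $\int_0^1\int_0^t s\,ds\,dt$. The paper gives no separate proof, so your write-up matches the intended derivation.
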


\subsection{Subspace Cubic Regularized Newton}\label{subsec:subspace_newton}
In this part, we start by providing an overview of the classic full-dimensional CRN method \citep{nesterov2006cubic} as well as its subspace variant. 
Recall that the CRN method is motivated by the following simple observation:  using Proposition~\ref{prop:taylor}, we can upper bound the function $f$ by its quadratic approximation with a cubic regularizer. Specifically, 
let $\vg_k$ and $\mH_k$ denote the vector $\nabla f(\vx_k)$ and the matrix $\nabla^2 f(\vx_k)$, respectively; we will use these notations throughout the paper. 
Then, given a regularization parameter $M \geq L_2$ and the current iterate $\vx_k$,  %
we have 
\begin{equation}
  f(\vx) \leq f(\vx_k) + \vg_k^\top (\vx-\vx_k) + \frac{1}{2}(\vx-\vx_k)^\top \mH_k (\vx-\vx_k) + \frac{M}{6}\|\vx-\vx_k\|^3. \label{eq:cubic_upper_bnd} 
\end{equation}
Thus, a reasonable choice is to select the new iterate $\vx_{k+1}$ as the minimizer of the upper bound of $f(\vx)$ that is given in the right-hand side of \eqref{eq:cubic_upper_bnd}. 
Specifically, starting with any $\vx_{0} \in \mathbb{R}^{d}$, the update rule of CRN can be written as 
\begin{align}
    \vs_{k} & := \argmin_{\vs \in \mathbb{R}^d} \Bigl\{ \vg_k^\top \vs + \frac{1}{2}\vs^\top \mH_k \vs + \frac{M}{6}\|\vs\|^3 \Bigr\} \label{eq:cubic_newton} \\ 
    \vx_{k+1} & = \vx_k + \vs_k. \nonumber  
\end{align}  
It is well-known that the CRN method achieves a fast convergence rate  
 of $\calO(1/k^2)$ if Assumptions~\ref{assum:convex}-\ref{assum:hessian} hold \citep{nesterov2006cubic}. However, solving the cubic subproblem in \eqref{eq:cubic_newton} could be computationally prohibitive, particularly in a high-dimensional setting where $d$ is large. To better elaborate on this issue, note that the standard approach for solving the above subproblem (as discussed in \citep{conn2000trust,cartis2011adaptive}) is to reformulate \eqref{eq:cubic_newton} as a one-dimensional nonlinear equation in $\lambda$, which is given by 
$
\lambda = \frac{M}{2}\|(\mH_k + \lambda \mI)^{-1} \vg_k\|
$.
By applying Newton's method to find its root, to be denoted by $\lambda^*$, the solution of \eqref{eq:cubic_newton} can then be determined as $\vs_k = -(\mH_k + \lambda^* \mI)^{-1} \vg_k$. Therefore, finding $\vs_k$ at each iteration demands %
solving multiple linear systems of equations of dimension $d$, which requires $\mathcal{O}(d^3)$ arithmetic operations. 
Hence, the cost of finding the solution of \eqref{eq:cubic_newton} in CRN scales poorly with the dimension $d$.

To mitigate this issue, subspace variants of the CRN method have been proposed in the literature by  \citet{doikov2018randomized} and \citet{Hanzely2020}.
Their main idea is to restrict the variable $\vs$ in the subproblem in \eqref{eq:cubic_newton} to a low-dimensional subspace $\mathcal{V}_k$, thus reducing the effective dimension of the subproblem. 
Concretely, let $\mV_k \in \mathbb{R}^{d\times m} $ be a tall matrix whose columns form an orthogonal basis for $\mathcal{V}_k$, where $m$ is the subspace dimension. Then, solving the cubic subproblem in \eqref{eq:cubic_newton} over the subspace $\mathcal{V}_k$ is equivalent to
\begin{align}
  \vz_{k} &:= \argmin_{\vz \in \mathbb{R}^m } \Bigl\{ \tilde{\vg}_k^\top \vz  + \frac{1}{2}\vz^\top \tilde{\mH}_k \vz + \frac{M}{6}\|\vz\|^3 \Bigr\}, \label{eq:general_subspace}\\
  \vs_k &:= \mV_k \vz_k, \nonumber
\end{align} 
where $\tilde{\vg}_k := \mV_k^\top \vg_k \in \mathbb{R}^m$ and $\tilde{\mH}_k := \mV_k^\top \mH_k \mV_k \in \mathbb{R}^{m\times m}$ are the gradient and Hessian in the subspace, respectively. Note that the effective dimension of the subproblem in \eqref{eq:general_subspace} is $m$  and hence it can be solved in $O(m^3)$ time. %
In particular, \citet{doikov2018randomized} considered objective functions with a block separable structure, and their method defines the subspace $\mathcal{V}_k$ and its corresponding projection matrix $\mV_k$ by sampling a random block of coordinates at each iteration.
Later, \citet{Hanzely2020} extended this idea of the subspace CRN method to general objective functions and general random subspaces satisfying the assumption $\E[\mV_k \mV_k^\top ]= \frac{m}{d}\mI$ for all $k\geq 0$.

Although the proposed randomized subspace CRN methods in \citep{doikov2018randomized, Hanzely2020} successfully reduce the cost of solving the subproblem in the CRN method, their convergence rate depends on the problem's dimension $d$, as highlighted in Table~\ref{tab:second_order}. 
{Moreover, in addition to Assumptions~\ref{assum:convex}-\ref{assum:hessian}, they also require an extra assumption that the objective function Hessian has bounded eigenvalues, i.e.,  $\nabla^2 f(\vx) \preceq L_1 \mI$ for all $\vx\in \reals^d$.
Under these assumptions, SSCN is shown to achieve a rate of $\calO(\frac{d-m}{m}\cdot\frac{L_1}{k}+\frac{d^2}{m^2}\cdot\frac{1}{k^2})$.}
{This issue raises the question of whether there is a better choice of the subspace so that the convergence rate of the subspace CRN method is independent of the dimension and has a better dependence on $L_1$}, while ensuring that the cost of solving the subproblem remains affordable. 
In the next section, we show that our proposed Krylov subspace CRN method can achieve this goal.

\section{The Proposed Algorithm}
In this section, we first explain the rationale behind the use of the Krylov subspace before introducing our proposed Krylov CRN method.

\subsection{Rationale behind the Krylov Subspace}\label{subsec:rationale}
To motivate the use of the Krylov subspace, let us examine how introducing the subspace $\mathcal{V}_k$ would change the convergence analysis and under what conditions on $\calV_k$ the subspace method can still retain the fast convergence rate of $\calO(1/k^{2})$. 
In the analysis of the CRN method in \citep{nesterov2006cubic}, given the current iterate $\vx_{k}$ and the next iterate $\vx_{k + 1}$ which is computed based on \eqref{eq:cubic_newton}, we use the following crucial inequality
\begin{equation}\label{eq:key_inequality}
  f(\vx_{k+1}) \leq f(\vx_k) +  \vg_k^\top \vs  + \frac{1}{2}\vs^\top \mH_k \vs + \frac{M}{6}\|\vs\|^3,
\end{equation}
which is true for any $M \geq L_2$ and for all $\vs\in \mathbb{R}^d$. Importantly, \eqref{eq:key_inequality} provides an upper bound on $f(\vx_{k+1})$ for \emph{any} $\vs\in \mathbb{R}^d$, which gives us the freedom to choose a well-designed $\vs$ related to the optimal solution $\vx^*$ in the convergence analysis. 

Given that the inequality in \eqref{eq:key_inequality} plays a significant role in demonstrating the $\bigO(1/k^2)$ convergence rate of CRN, we need to investigate the equivalent inequality that can be established for the subspace variant of the CRN method. This inequality may assist us in making well-informed choices for the subspace $\mathcal{V}_k$ to minimize its impact on the convergence rate. More precisely, if we follow the update in~\eqref{eq:general_subspace}, then we can show that 
for any $\vs\in \mathbb{R}^d$, we have 
\begin{equation}
  f(\vx_{k+1}) \leq f(\vx_k) +\vg_k^\top \mP_k \vs  + \frac{1}{2}(\mP_k\vs)^\top \mH_k \mP_k\vs + \frac{M}{6}\|\mP_k\vs\|^3, \label{eq:key_inequality_subspace}
\end{equation}
where $\mP_k := \mV_k \mV_k^\top$ is the orthogonal projection matrix associated to the subspace $\mathcal{V}_k$ (for more details check the proof of  Lemma~\ref{lem:one_step_krylov} in the Appendix}). By comparing \eqref{eq:key_inequality_subspace} and \eqref{eq:key_inequality} term by term, we observe that \eqref{eq:key_inequality} still holds true if  $\mP_k$ is selected such that the following conditions hold for all $\vs\in \mathbb{R}^d$:
\begin{enumerate}[(A), leftmargin=1cm]
  \item $ \vg_k^\top \mP_k \vs \leq  \vg_k^\top \vs $.
  \item $ (\mP_k\vs)^\top \mH_k \mP_k\vs \leq \vs^\top \mH_k \vs$.
  \item $\|\mP_k \vs\| \leq \|\vs\|$.
\end{enumerate} 
Note that Condition (C) holds automatically since $\mP_k$ is an orthogonal projection matrix. On the other hand, the first two conditions impose restrictions on the matrix $\mP_k$, which in turn constrains the subspace $\mathcal{V}_k$ that we can choose. Below, we will outline the conditions under which the requirements in (A) and (B) would be met. 
\begin{proposition}\label{prop:maximal_krylov}
  Let $\mP_k$ be the orthogonal projection matrix associated to the subspace $\mathcal{V}_k$. 
  \begin{enumerate}[(i), leftmargin=1cm]
\vspace{-2mm}
    \item Condition (A) holds true if and only if $\vg_k \in \mathcal{V}_k$. 
    \item Condition (B) holds true if and only if $\mathcal{V}_k$ is an invariant subspace with respect to the matrix $\mH_k$, i.e., $\mH_k \vs \in \mathcal{V}_k$ for any $\vs\in \mathcal{V}_k$. 
  \end{enumerate}
\end{proposition}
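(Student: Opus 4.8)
The plan is to handle the two equivalences separately, in each case collapsing the universal quantifier ``for all $\vs$'' into an algebraic condition on $\mP_k$ by exploiting the symmetry of the orthogonal projector and the decomposition $\reals^d = \mathcal{V}_k \oplus \mathcal{V}_k^\perp$.

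For part (i), I would first note that Condition (A), namely $\vg_k^\top \mP_k \vs \le \vg_k^\top \vs$, is homogeneous and linear in $\vs$; applying it to both $\vs$ and $-\vs$ upgrades it to the equality $\vg_k^\top \mP_k \vs = \vg_k^\top \vs$ for all $\vs$, i.e. $(\mP_k \vg_k - \vg_k)^\top \vs = 0$ for all $\vs$ (using $\mP_k^\top = \mP_k$). This holds iff $\mP_k \vg_k = \vg_k$, which by the defining property of the orthogonal projector onto $\mathcal{V}_k$ is exactly $\vg_k \in \mathcal{V}_k$. This direction is essentially immediate once the linear inequality is promoted to an equality.

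For part (ii), I would write an arbitrary $\vs = \vu + \vw$ with $\vu := \mP_k \vs \in \mathcal{V}_k$ and $\vw := (\mI - \mP_k)\vs \in \mathcal{V}_k^\perp$. Using symmetry of $\mH_k$, expand $\vs^\top \mH_k \vs = \vu^\top \mH_k \vu + 2\vu^\top \mH_k \vw + \vw^\top \mH_k \vw$ while $(\mP_k\vs)^\top \mH_k \mP_k\vs = \vu^\top \mH_k \vu$, so Condition (B) becomes $2\vu^\top \mH_k \vw + \vw^\top \mH_k \vw \ge 0$, required for every $\vu \in \mathcal{V}_k$ and $\vw \in \mathcal{V}_k^\perp$ (as $\vs$ ranges over $\reals^d$, the pair $(\vu,\vw)$ ranges over all of $\mathcal{V}_k \times \mathcal{V}_k^\perp$). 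For the ``if'' direction, invariance gives $\mH_k \vu \in \mathcal{V}_k$, hence $\vu^\top \mH_k \vw = (\mH_k \vu)^\top \vw = 0$ since $\vw \perp \mathcal{V}_k$, and the leftover term $\vw^\top \mH_k \vw \ge 0$ because convexity (Assumption~\ref{assum:convex}) yields $\mH_k \succeq 0$. For the ``only if'' direction I would replace $\vw$ by $t\vw$ and read $2t\,\vu^\top \mH_k \vw + t^2\,\vw^\top \mH_k \vw \ge 0$ as a quadratic in $t$ whose nonnegativity near $t=0$ on both sides forces the linear coefficient $\vu^\top \mH_k \vw$ to vanish; thus $\mH_k \vu \perp \mathcal{V}_k^\perp$, i.e. $\mH_k \vu \in \mathcal{V}_k$ for all $\vu \in \mathcal{V}_k$, which is precisely invariance.

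The main obstacle is the ``only if'' direction of (ii): extracting subspace invariance from a single scalar inequality requires the scaling/linearization trick to isolate the cross term, and one must check that letting $\vs$ range freely genuinely gives independent control of $\vu$ and $\vw$. I would also emphasize that the ``if'' direction relies essentially on $\mH_k \succeq 0$, so convexity is not decorative here — absent positive semidefiniteness, invariance alone would fail to guarantee (B).
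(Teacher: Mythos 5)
Your proof is correct and follows essentially the same route as the paper: part (i) by forcing the linear inequality into an equality to conclude $\mP_k\vg_k=\vg_k$, and part (ii) via the decomposition $\vs=\mP_k\vs+(\mI-\mP_k)\vs$ followed by a scaling argument that kills the cross term $\vu^\top\mH_k\vw$. The only cosmetic differences are that the paper tests Condition (A) against $\vs=\mP_k\vg_k-\vg_k$ rather than against $\pm\vs$, and in (ii) scales the $\mathcal{V}_k$-component by $\lambda\to-\infty$ rather than the $\mathcal{V}_k^\perp$-component by $t$; your explicit appeal to $\mH_k\succeq 0$ for the ``if'' direction of (ii) is a detail the paper leaves implicit, and is a welcome addition.
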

From Proposition \ref{prop:maximal_krylov}, we immediately obtain that $\mH_k \vg_k \in \mathcal{V}_k$. Moreover, by repeatedly applying~(ii), it follows from induction that $\mH_k^i \vg_k \in \mathcal{V}_k$ for any $i \geq 0$. Thus, we conclude that the minimal subspace satisfying both conditions (A) and (B) is given by the linear span of $\{\mH_k^i \vg_k\}_{i=0}^{\infty}$, %
which is exactly the maximal \emph{Krylov subspace} generated by $\mH_k$ and $\vg_k$. Formally, 
the $j$-th Krylov subspace generated by a symmetric matrix $\mA$ and a vector $\vb$ is defined as
\begin{equation*}
    \mathcal{K}_j (\mA, \vb) = \mathrm{span}\{\vb, \mA \vb, \dots, \mA^{j-1} \vb\}. 
\end{equation*} 
Moreover, it can be shown that \citep[Chapter 6]{saad2011numerical} there exists an integer $r_0$ such that $\mathcal{K}_{j}(\mA, \vb) = \mathcal{K}_{r_0}(\mA, \vb)$ for all $j \geq r_0$, and we call $\mathcal{K}_{r_0}(\mA, \vb)$ as the maximal Krylov subspace. {In this case, the dimension of the maximal Krylov subspace is $r_0$.}

Now given these definitions, if we select $\mathcal{V}_k$ in \eqref{eq:general_subspace} as the maximal Krylov subspace generated by $\mH_k$ and $\vg_k$ denoted by $\calK_{r_0}(\mH_k,\vg_k)$, then the key inequality in \eqref{eq:key_inequality} remains valid and we retain the same convergence rate as the full-dimensional CRN method. However, the dimension of the maximal Krylov subspace can be as large as $d$, which contradicts our goal of dimension reduction. To solve this problem, we propose using the Krylov subspace up to dimension $m$, which is $\calK_m(\mH_k,\vg_k)$. In this case, Condition (A) still holds exactly since $\vg_k \in \calK_m(\mH_k,\vg_k)$ for any $m\geq 1$, and the only violated condition due to this approximation is Condition (B). In fact, as we shall show in Section~\ref{sec:convergence}, the approximation error corresponding to Condition (B) would be independent of the problem's dimension $d$. In comparison, when using random subspace selection as in SSCN, both conditions in (A) and (B) only hold approximately and the induced approximation error depends on the ratio $m/d$, resulting in a convergence rate that inevitably depends on $d$.

\subsection{Krylov Cubic Regularized Newton}
Next, we present our proposed Krylov CRN method, which is a particular instance of the subspace CRN method in \eqref{eq:general_subspace}. Specifically, we choose the subspace~$\calV_k$ to be the $m$-th Krylov subspace $\calK_m(\mH_k,\vg_k)$ and let $\mV_k \in \mathbb{R}^{d\times m}$ be the matrix formed by its orthonormal basis. 
Our method is summarized in Algorithm~\ref{alg:subspace_cubic}. 

\begin{algorithm}[t!]
  \caption{Krylov cubic regularized Newton}\label{alg:subspace_cubic}
  \begin{algorithmic}[1]
  \State \textbf{Input:}  Initial point $\vx_0 \in \mathbb{R}^d$, subspace dimension $m$, regularization parameter $M>0$
  \For{$k=0,1,\dots,$}
  \State Set $(\mV_k, \tilde{\vg}_k, \tilde{\mH}_k) = \textsc{Lanczos}(\mH_k, \vg_k;m)$ %
  \State 
  Solve the cubic subproblem 
  \begin{equation*}
      \vz_{k} = \argmin_{\vz \in \mathbb{R}^m}\; \Bigl\{ \tilde{\vg}_k^\top \vz  + \frac{1}{2} \vz^\top \tilde{\mH}_k \vz  + \frac{M}{6}\|\vz\|^3 \Bigr\},
  \end{equation*}
  \State Update $\vx_{k+1} = \vx_k + \mV_k \vz_k$
  \EndFor
  \end{algorithmic}
  \end{algorithm}

The only missing piece in Algorithm~\ref{alg:subspace_cubic} is how to compute the orthonormal basis of $\calV_k$ to form the matrix $\mV_k$, as well as the subspace gradient $\tilde{\vg}_k$ and the subspace Hessian $\tilde{\mH}_k$. 
To achieve this goal, we use the Lanczos method as shown in Algorithm~\ref{alg:Lanczos}, which is a commonly used method for computing an orthonormal basis for the Krylov subspace \citep{lanczos1950iteration}. 
  \begin{algorithm}[t!]
    \caption{$(\mV,\tilde{\vb},\tilde{\mA} )=\textsc{Lanczos}(\mA, \vb;m)$}\label{alg:Lanczos}
\begin{algorithmic}[1]
\State \textbf{Input:} $\mA \in \mathbb{R}^{d \times d}$, $\vb \in \mathbb{R}^d$, and the dimension $m$
\State \textbf{Initialize:} $\vv_1 = {\vb}/{\|\vb\|}$, $\beta_1 = 0$, $\vv_0 = 0$ 
\For{$j=1,2,\dots,m$}
    \State $\vw_j \leftarrow \mA \vv_j - \beta_j \vv_{j-1}$
    \State $\alpha_j \leftarrow  \vw_j^\top \vv_j  $
    \State $\vw_j \leftarrow \vw_j - \alpha_j \vv_j $ 
    \State $\beta_{j+1} \leftarrow \|\vw_j\|_2$ 
    \State $\vv_{j+1} \leftarrow \vw_j / \beta_{j+1}$
\EndFor
\State \textbf{Output:} $\mV = [\vv_1, \vv_2, \dots, \vv_m]$, $\tilde{\vb} = \|\vb\| \ve_1$, and $\tilde{\mA} = \mathrm{tridiag}(\{\beta_j\}_{j=2}^{m}, \{\alpha_j\}_{j=1}^m, \{\beta_j\}_{j=2}^{m})$
\end{algorithmic}
\end{algorithm} 
Specifically, given an input matrix $\mA$, an input vector $\vb$ and the target dimension $m$, 
Algorithm~\ref{alg:Lanczos} iteratively generates a sequence of orthonormal vectors $\{\vv_j\}_{j=1}^m$ that spans the Krylov subspace, as well as 
two auxiliary scalar sequences $\{\alpha_j\}_{j=1}^m$ and $\{\beta_j\}_{j=2}^{m+1}$.

In the analysis, we will heavily rely on some useful properties of the Lanczos vectors $\{\vv_j\}_{j=1}^m$, which we summarize in Proposition~\ref{prop:Lanczos} for convenience. 
\begin{proposition}\label{prop:Lanczos}
    The following statements hold true. 
    \begin{enumerate}[(i),leftmargin=1cm]
    \vspace{-2mm}
      \item $\mathcal{K}_j(\mA, \vb) = \mathrm{span}\{\vv_1, \dots, \vv_j\}$, for any $j\geq 1$.
      \item $\mA \vv_j = \beta_{j+1} \vv_{j+1} +\alpha_j \vv_j + \beta_j \vv_{j-1}$, for any $j \geq 1$. 
      \item Let $\mV^{(j)} = [\vv_1, \vv_2, \dots, \vv_j]\in \mathbb{R}^{d\times j}$ and $\tilde{\mA}^{(j)} = \mathrm{tridiag}(\{\beta_l\}_{l=2}^{j}, \{\alpha_l\}_{l=1}^{j}, \{\beta_l\}_{l=2}^{j})$ defined as
        \begin{equation*} 
            \begin{bmatrix}
                \alpha_1    & \beta_2   &  \\
                \beta_2     & \alpha_2  & \beta_3 \\
                            & \beta_3   & \alpha_3    & \ddots \\
                            &           & \ddots    & \ddots    & \beta_{j-1} \\
                            &           &           & \beta_{j-1}   & \alpha_{j-1}  & \beta_j \\
                            &   &   &   & \beta_j   & \alpha_j
            \end{bmatrix}.
        \end{equation*}
        Then, we have that $(\mV^{(j)})^\top \mA \mV^{(j)} = \tilde{\mA}^{(j)}$ and $(\mV^{(j)})^\top \vb = \|\vb\|\ve^{(j)}_1$, where $\ve^{(j)}_1$ denotes the first standard basis vector in $\mathbb{R}^j$.  
    \end{enumerate}
\end{proposition}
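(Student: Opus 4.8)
The plan is to establish the three claims in an order that respects their logical dependencies, since each builds on the previous one. I would prove (ii) first, as it follows almost immediately from the update rules of Algorithm~\ref{alg:Lanczos}: collecting the two assignments to $\vw_j$ in lines 4 and 6, at the end of the loop we have $\vw_j = \mA\vv_j - \alpha_j\vv_j - \beta_j\vv_{j-1}$, and since $\vv_{j+1} = \vw_j/\beta_{j+1}$, rearranging yields exactly $\mA\vv_j = \beta_{j+1}\vv_{j+1} + \alpha_j\vv_j + \beta_j\vv_{j-1}$. This identity, together with the symmetry of $\mA$, is the engine for everything that follows.

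The heart of the argument, and the step I expect to be the main obstacle, is proving that the vectors $\{\vv_j\}$ are orthonormal, which I would do by strong induction on $j$. Norm-one is immediate from the normalization $\vv_{j+1} = \vw_j/\|\vw_j\|$. For orthogonality, I would take the inner product of the recurrence $\beta_{j+1}\vv_{j+1} = \mA\vv_j - \alpha_j\vv_j - \beta_j\vv_{j-1}$ with each $\vv_i$ for $i \leq j$ and show it vanishes. The case $i=j$ uses that $\alpha_j$ was defined precisely as $\vv_j^\top\mA\vv_j$ (after checking $\vw_j^\top\vv_j = \vv_j^\top\mA\vv_j$ via $\vv_{j-1}\perp\vv_j$). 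The subtle cases are $i=j-1$ and $i \leq j-2$, where the cross term $\vv_i^\top\mA\vv_j$ must be evaluated; the key trick is to move $\mA$ onto $\vv_i$ by symmetry and expand $\mA\vv_i$ through the recurrence (ii) at index $i$. For $i \leq j-2$ every resulting term is orthogonal to $\vv_j$ by the inductive hypothesis, so the cross term is zero; for $i=j-1$ it equals $\beta_j$, which cancels exactly against the $-\beta_j\vv_{j-1}$ contribution. This cancellation is precisely the phenomenon by which a symmetric matrix collapses the full Gram--Schmidt orthogonalization into a three-term recurrence, and getting the index bookkeeping right is the delicate part of the whole proposition.

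Once orthonormality is in hand, claim (i) follows by induction: the inclusion $\mathrm{span}\{\vv_1,\dots,\vv_j\} \subseteq \calK_j(\mA,\vb)$ is immediate from (ii), since $\vv_{j+1}$ is a linear combination of $\mA\vv_j$, $\vv_j$, and $\vv_{j-1}$, all lying in $\calK_{j+1}(\mA,\vb)$; equality then follows by dimension counting, as the $j$ orthonormal vectors are linearly independent while $\calK_j(\mA,\vb)$ has dimension at most $j$. Finally, claim (iii) is a direct computation from (ii) and orthonormality: the $(l,i)$ entry of $(\mV^{(j)})^\top\mA\mV^{(j)}$ equals $\vv_l^\top\mA\vv_i = \beta_{i+1}\delta_{l,i+1} + \alpha_i\delta_{l,i} + \beta_i\delta_{l,i-1}$, which places $\alpha_i$ on the diagonal and the appropriate $\beta$'s on the two off-diagonals, matching $\tilde{\mA}^{(j)}$ exactly; and $(\mV^{(j)})^\top\vb = \|\vb\|(\mV^{(j)})^\top\vv_1 = \|\vb\|\ve_1^{(j)}$ since $\vb = \|\vb\|\vv_1$ and the columns are orthonormal. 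Throughout, I would implicitly assume the algorithm does not break down before step $m$, i.e.\ $\beta_{j+1}\neq 0$ for $j<m$, which holds whenever $m$ does not exceed the dimension of the maximal Krylov subspace generated by $\mA$ and $\vb$.
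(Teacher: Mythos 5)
Your proof is correct, and it covers strictly more ground than the paper's own argument. The paper disposes of item (i) by citing \citet[Proposition 6.5]{saad2011numerical} and simply \emph{assumes} the orthonormality of the Lanczos vectors as a standard fact, whereas you prove orthonormality from scratch by strong induction on the three-term recurrence, using the symmetry of $\mA$ to move it onto $\vv_i$ and showing the cross terms $\vv_i^\top \mA \vv_j$ vanish for $i \le j-2$ and cancel against $-\beta_j \vv_{j-1}$ for $i = j-1$; this is the standard textbook derivation and your index bookkeeping is right, including the correct identification $\alpha_j = \vv_j^\top \mA \vv_j$ after using $\vv_{j-1} \perp \vv_j$. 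You are also right to flag the no-breakdown condition $\beta_{j+1} \neq 0$, which the paper leaves implicit. For item (iii) the two arguments are equivalent but packaged differently: the paper first proves the matrix identity $\mA \mV^{(j)} = \mV^{(j)} \tilde{\mA}^{(j)} + \beta_{j+1} \vv_{j+1} (\ve_j^{(j)})^\top$ (Lemma~\ref{lem:recursive_matrix_form}) and then left-multiplies by $(\mV^{(j)})^\top$, while you compute the $(l,i)$ entry $\vv_l^\top \mA \vv_i$ directly from the recurrence; both reduce to the same use of item (ii) plus orthonormality. The paper's route is shorter because it leans on the literature; yours buys self-containedness at the cost of the inductive orthogonality argument, which is indeed the only genuinely delicate step.
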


Given the results of Proposition~\ref{prop:Lanczos}, by applying Algorithm~\ref{alg:Lanczos} with $\mH_k$ and $\vg_k$ as the inputs, we can obtain the orthogonal basis matrix $\mV_k$ for the Krylov subspace $\calK_m(\mH_k,\vg_k)$. As a byproduct, we also obtain the subspace gradient $\tilde{\vg}_k = \mV_k^\top \vg_k$ and the subspace Hessian $\tilde{\mH}_k = \mV_k^\top \mH_k \mV_k$ by (iii) in Proposition~\ref{prop:Lanczos}. Moreover, utilizing the Krylov subspace has the additional benefit of having $\tilde{\mH}_k$ as a tridiagonal matrix, which simplifies solving the cubic subproblem in \eqref{eq:general_subspace} greatly. To elaborate, following the standard approach as described in Section~\ref{subsec:subspace_newton}, each Newton step requires solving a tridiagonal system of linear equations, which can be solved in $\calO(m)$ operations. Thus, given $\tilde{\vg}_k$ and $\tilde{\mH}_k $, the cost of solving the cubic subproblem for our method is $\calO(m)$.

\begin{remark}[Computational cost of Algorithm~\ref{alg:subspace_cubic}]
 It is worth noting that in Algorithm~\ref{alg:Lanczos}, we only need to evaluate matrix-vector products of the matrix $\mA$. Therefore, in the implementation of Algorithm~\ref{alg:subspace_cubic}, we do not have to store the Hessian matrix $\mH_k$ explicitly, but only need to compute $m$ Hessian-vector products. This computation can be done efficiently via back-propagation with a computational cost similar to gradient computation \citep{pearlmutter1994fast}, which typically requires $\calO(d)$ arithmetic operations.  Hence, the total cost per iteration of our method is $\calO(md)$.
 \end{remark}

\section{Convergence Analysis}\label{sec:convergence}
In this section, we analyze the convergence rate of our proposed method in Algorithm~\ref{alg:subspace_cubic}. We first characterize the additional approximation error when solving the cubic subproblem over a Krylov subspace, as opposed to using the full-dimensional space $\reals^d$ as in \eqref{eq:cubic_newton}. This error analysis is key to our convergence proofs and is fully discussed in Section~\ref{subsec:error_krylov}. We then present our main theorems in Section~\ref{subsec:main}, covering both the cases where $f$ is convex and strongly convex. Finally, in Section~\ref{subsec:hessian_structure}, we highlight the connection between our convergence bounds and the eigenspectrum of the Hessian matrices. In particular, we show that our convergence rate can be further improved when the Hessian matrices admit specific spectral structures.

\subsection{Approximation Error of Krylov Subspace}\label{subsec:error_krylov}
As we discussed in Section~\ref{subsec:rationale}, the crucial step in the convergence analysis is to establish a similar upper bound on $f(\vx_{k+1})$ as in \eqref{eq:key_inequality}. We already showed that this key inequality will remain valid if Conditions (A), (B), and (C) are all satisfied, which is the case when the subspace $\calV_k$ in Algorithm~\ref{alg:subspace_cubic} is chosen as the maximal Krylov subspace.
However, since we employ a Krylov subspace only up to dimension $m$ in Algorithm~\ref{alg:subspace_cubic}, Condition (B) is potentially violated and this is the main source of 
the approximation error.

As it turns out, in our convergence analysis, it is enough to control the ``minimal violation'' of Condition~(B) among all the Krylov subspaces up to the dimension $m$. To formalize this, let us define $\mV_k^{(j)} \in \reals^{d \times j}$ as the matrix that consists of the first $j$ Lanczos vectors (cf. item (iii) of Proposition~\ref{prop:maximal_krylov}). Then, we can define $\mP_k^{(j)} = \mV_k^{(j)} \mV_k^{(j)\top}$, which is the orthogonal projection matrix of the subspace $\mathcal{K}_j(\mH_k,\vg_k)$. %
In the following lemma, we provide the key inequality for Algorithm~\ref{alg:subspace_cubic} that serves a similar role as the one in \eqref{eq:key_inequality}.   
\begin{lemma}\label{lem:intermediate_step}
    Let $\{\vx_k\}_{k\geq 0}$ be the sequence generated by Algorithm~\ref{alg:subspace_cubic} and suppose that Assumptions~\ref{assum:convex}, \ref{assum:bounded_sublevel} and \ref{assum:hessian} hold true. Suppose $M\geq L_2$. For any $\vs \in \mathbb{R}^d$, we have 
    \begin{equation*}
        f(\vx_{k+1}) \leq f(\vx_k) +  \vg_k^\top \vs  + \frac{1}{2}  \vs^\top \mH_k  \vs + \frac{M}{6}\| \vs\|^3+\frac{1}{2}\min_{j\in\{ 1,\dots,m\}} \left\{ (\mP_k^{(j)}\vs)^\top \mH_k \mP_k^{(j)}\vs -\vs^\top \mH_k \vs \right\}. \nonumber%
    \end{equation*}
\end{lemma}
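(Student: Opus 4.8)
The plan is to start from the quadratic-plus-cubic upper bound together with the optimality of the subspace update, collapse everything into a single ``master'' inequality that is valid over the whole Krylov subspace, and then instantiate that master inequality cleverly at the nested projections of an arbitrary direction $\vs$.

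First I would apply Proposition~\ref{prop:taylor} with $\vx' = \vx_k$ and $\vx = \vx_{k+1} = \vx_k + \mV_k\vz_k$; since $M \ge L_2$ this yields $f(\vx_{k+1}) \le f(\vx_k) + \vg_k^\top(\mV_k\vz_k) + \frac{1}{2}(\mV_k\vz_k)^\top \mH_k (\mV_k\vz_k) + \frac{M}{6}\|\mV_k\vz_k\|^3$. Next I would use $\tilde{\vg}_k = \mV_k^\top\vg_k$ and $\tilde{\mH}_k = \mV_k^\top\mH_k\mV_k$ from Proposition~\ref{prop:Lanczos}(iii), together with the orthonormality of the columns of $\mV_k$ (so that $\|\mV_k\vz\| = \|\vz\|$), to observe that the subspace cubic model minimized in Algorithm~\ref{alg:subspace_cubic} coincides with the full cubic model evaluated at $\mV_k\vz$. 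Hence the optimality of $\vz_k$ gives, for every $\vz \in \reals^m$, the bound $f(\vx_{k+1}) \le f(\vx_k) + \vg_k^\top(\mV_k\vz) + \frac{1}{2}(\mV_k\vz)^\top\mH_k(\mV_k\vz) + \frac{M}{6}\|\mV_k\vz\|^3$. Writing $\vu = \mV_k\vz$, this is the master inequality $f(\vx_{k+1}) \le f(\vx_k) + \vg_k^\top\vu + \frac{1}{2}\vu^\top\mH_k\vu + \frac{M}{6}\|\vu\|^3$, valid for \emph{every} $\vu$ in the Krylov subspace $\calV_k = \calK_m(\mH_k,\vg_k)$.

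The key step is then to instantiate this master inequality at $\vu = \mP_k^{(j)}\vs$ for each fixed $j \in \{1,\dots,m\}$ and an arbitrary $\vs \in \reals^d$; this is legitimate because $\mP_k^{(j)}\vs \in \calK_j(\mH_k,\vg_k) \subseteq \calK_m(\mH_k,\vg_k) = \calV_k$. For the linear term, since $\vg_k \in \calK_1(\mH_k,\vg_k) \subseteq \calK_j(\mH_k,\vg_k)$ for every $j \ge 1$, the projection fixes $\vg_k$, i.e. $\mP_k^{(j)}\vg_k = \vg_k$; combined with the symmetry of $\mP_k^{(j)}$ this gives $\vg_k^\top\mP_k^{(j)}\vs = (\mP_k^{(j)}\vg_k)^\top\vs = \vg_k^\top\vs$, which is Condition~(A) holding with equality. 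For the cubic term, since $\mP_k^{(j)}$ is an orthogonal projection we have $\|\mP_k^{(j)}\vs\| \le \|\vs\|$ (Condition~(C)), so $\frac{M}{6}\|\mP_k^{(j)}\vs\|^3 \le \frac{M}{6}\|\vs\|^3$. Only the quadratic term is left untouched, and I would simply rewrite $\frac{1}{2}(\mP_k^{(j)}\vs)^\top\mH_k\mP_k^{(j)}\vs = \frac{1}{2}\vs^\top\mH_k\vs + \frac{1}{2}\bigl[(\mP_k^{(j)}\vs)^\top\mH_k\mP_k^{(j)}\vs - \vs^\top\mH_k\vs\bigr]$, isolating the violation of Condition~(B).

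Substituting these three facts into the master inequality yields, for each $j$, the bound $f(\vx_{k+1}) \le f(\vx_k) + \vg_k^\top\vs + \frac{1}{2}\vs^\top\mH_k\vs + \frac{M}{6}\|\vs\|^3 + \frac{1}{2}\bigl[(\mP_k^{(j)}\vs)^\top\mH_k\mP_k^{(j)}\vs - \vs^\top\mH_k\vs\bigr]$. Since the left-hand side and the first four terms on the right are independent of $j$, I can take the minimum over $j \in \{1,\dots,m\}$ of the remaining bracketed term, which gives exactly the claimed inequality. I expect the only real subtlety — rather than any heavy computation — to be the conceptual point that the master inequality is available separately for \emph{every} nested projection $\mP_k^{(j)}\vs$, so that the minimum over $j$ comes for free; in particular one must check carefully that $\vg_k \in \calK_j(\mH_k,\vg_k)$ for all $j \ge 1$ (so the linear term collapses exactly) and that each $\mP_k^{(j)}\vs$ is feasible for the master inequality because $\calK_j \subseteq \calK_m$.
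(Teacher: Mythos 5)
Your proposal is correct and follows essentially the same route as the paper: the ``master inequality'' you derive is exactly the paper's intermediate Lemma~\ref{lem:one_step_krylov}, and the subsequent instantiation at the nested projections $\mP_k^{(j)}\vs$, the exact collapse of the linear term via $\mP_k^{(j)}\vg_k = \vg_k$, the contraction of the cubic term, the isolation of the Condition~(B) violation in the quadratic term, and the final minimum over $j$ all match the paper's argument step for step.
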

In light of Lemma~\ref{lem:intermediate_step}, 
our aim is to bound the additional error term $\min_{j\in \{1,\dots,m\}} \{ (\mP_k^{(j)}\vs)^\top \mH_k \mP_k^{(j)}\vs -\vs^\top \mH_k \vs \}$ for any $\vs\in \mathbb{R}^d$.
Before stating our result in Lemma~\ref{lem:bessel_main}, we introduce a quantity that 
plays a major role in our error analysis. 
Specifically, for a symmetric matrix $\mA\in \mathbb{R}^{d\times d}$ and a vector $\vb \in \mathbb{R}^d$, we define 
\begin{equation*}
  \sigma^{(j)}(\mA,\vb) := \max_{\vw \in \calK^{(j)}, \|\vw\|=1} {\mathrm{dist}(\mA \vw, \calK^{(j)})}
\end{equation*}
for $j=1,\dots,m$, where $\calK^{(j)}$ denotes $ \calK_j(\mA,\vb)$
and $\mathrm{dist}(\vu, \calV) := \min_{\vv \in \calV} \|\vu-\vv\|$ is the distance between a vector $\vu$ and a subspace $\calV$. Intuitively, $\sigma^{(j)}(\mA,\vb)$ characterizes how far a vector $\vw$ in $\calK^{(j)}$ can be pushed away from the subspace under the linear mapping $\mA$, and in particular we have $\sigma^{(j)}(\mA,\vb)=0$ if and only if $\calK_j(\mA,\vb)$ is an invariant subspace with respect to the matrix $\mA$. Furthermore, we define 
\begin{equation}\label{eq:def_rho}
        \rho^{(m)}(\mA,\vb) := 
        \biggl(\prod_{j=1}^m \sigma^{(j)}(\mA,\vb)\biggr)^{\frac{1}{m}}.
\end{equation}
As we shall discuss in Section~\ref{subsec:hessian_structure}, $\rho^{(m)}(\mA,\vb)$ is closely related to the eigenspectrum of the matrix $\mA$. In particular, it can be upper bounded by the largest eigenvalue of $\mA$ in the worst case, as shown in Lemma~\ref{lem:L1_bound}. 
\begin{lemma}\label{lem:L1_bound}
  Assume that $0 \preceq \mA \preceq L_1 \mI$. Then, for any $\vb\in \reals^d$ we have $\rho^{(m)}(\mA,\vb) \leq 2^{1/m} L_1/4$. 
\end{lemma}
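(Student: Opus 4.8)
The plan is to reduce $\rho^{(m)}(\mA,\vb)$ to a product of the off-diagonal Lanczos coefficients $\beta_{j+1}$, reinterpret that product through the spectral measure of $\mA$ at $\vb$, and conclude with the classical Chebyshev extremal bound. Since $\sigma^{(j)}$ and $\rho^{(m)}$ depend only on the Krylov subspaces $\calK_j(\mA,\vb)$, they are invariant under rescaling of $\vb$, so I may assume $\|\vb\|=1$.

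First I would give an exact formula for $\sigma^{(j)}$. Writing any unit $\vw \in \calK^{(j)} = \mathrm{span}\{\vv_1,\dots,\vv_j\}$ as $\vw = \sum_{i=1}^{j} c_i \vv_i$ and applying the three-term recurrence of Proposition~\ref{prop:Lanczos}(ii), every term $\mA\vv_i$ with $i \le j-1$ stays inside $\calK^{(j)}$, while $\mA\vv_j$ contributes the single out-of-subspace component $\beta_{j+1}\vv_{j+1}$ with $\vv_{j+1}\perp \calK^{(j)}$. Hence $\mathrm{dist}(\mA\vw,\calK^{(j)}) = |c_j|\,\beta_{j+1}$, and maximizing over $\sum_i c_i^2 = 1$ yields $\sigma^{(j)}(\mA,\vb)=\beta_{j+1}$, so that $\prod_{j=1}^{m}\sigma^{(j)}(\mA,\vb) = \prod_{j=1}^{m}\beta_{j+1}$. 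If the Krylov space saturates before dimension $m+1$, some $\beta_{j+1}=0$, the product vanishes, and the bound is trivial; so I only need the nondegenerate case.

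Next I would pass to orthogonal polynomials. Let $\mu$ be the spectral measure of $\mA$ at $\vb$, i.e.\ the probability measure with $\int x^k\,d\mu = \vb^\top \mA^k \vb$; since $0 \preceq \mA \preceq L_1\mI$, $\mu$ is supported in $[0,L_1]$. The Lanczos vectors satisfy $\vv_{j+1}\propto \pi_j(\mA)\vb$, where $\pi_j$ is the degree-$j$ monic orthogonal polynomial for $\mu$, and the coefficient telescopes as $\beta_{j+1} = \|\pi_j(\mA)\vb\|/\|\pi_{j-1}(\mA)\vb\|$ with $\pi_0 = 1$. Therefore $\prod_{j=1}^{m}\beta_{j+1} = \|\pi_m(\mA)\vb\| = \|\pi_m\|_{L^2(\mu)}$, using the spectral identity $\|\pi_m(\mA)\vb\|^2 = \int \pi_m(x)^2\,d\mu(x)$. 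Finally I would invoke extremality: by the defining minimality of monic orthogonal polynomials and the fact that $\mu$ is a probability measure on $[0,L_1]$, we have $\|\pi_m\|_{L^2(\mu)} = \min_{p\ \mathrm{monic},\,\deg p = m}\|p\|_{L^2(\mu)} \le \min_{p}\|p\|_{L^\infty([0,L_1])} = 2\,(L_1/4)^m$, the last equality being the minimal sup-norm of a monic degree-$m$ polynomial on an interval of length $L_1$, attained by the rescaled Chebyshev polynomial. Taking $m$-th roots gives $\rho^{(m)}(\mA,\vb)\le 2^{1/m}L_1/4$.

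The main obstacle is the middle step: cleanly establishing $\prod_{j}\beta_{j+1}=\|\pi_m\|_{L^2(\mu)}$, which requires setting up the spectral measure correctly and verifying that the Lanczos off-diagonal coefficients are exactly the monic orthogonal-polynomial norm ratios, including the treatment of the breakdown (degenerate) case. The first and last steps are comparatively routine — a direct computation with the three-term recurrence and the standard Chebyshev extremal estimate, respectively.
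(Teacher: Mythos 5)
Your proof is correct and follows essentially the same route as the paper: the identity $\sigma^{(j)}(\mA,\vb)=\beta_{j+1}$ and the telescoping $\prod_{j=1}^m\beta_{j+1}=\min_{p\in\mathcal{M}_m}\|p(\mA)\vb\|/\|\vb\|$ are exactly the content of Lemma~\ref{lem:beta_product} and Lemma~\ref{lem:matrix_polynomial}, and the final estimate comes from the same rescaled Chebyshev polynomial $2(L_1/4)^m\,T_m(2\lambda/L_1-1)$ bounded in sup-norm on $[0,L_1]$. Your spectral-measure/monic-orthogonal-polynomial packaging of the middle step is an equivalent (standard) restatement of the paper's subspace-distance telescoping, so the ``main obstacle'' you flag is precisely what Lemma~\ref{lem:beta_product}(iii) supplies.
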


As a corollary, 
we emphasize that 
$\rho^{(m)}(\mA,\vb)$ is independent of the problem's dimension and can be upper-bounded by the largest eigenvalue of $\mA$. Moreover, this upper bound can be further improved if the matrix $\mA$ exhibits specific spectral structures. We defer the discussions regarding these special cases to Section~\ref{subsec:hessian_structure}.

Next we leverage the definition in \eqref{eq:def_rho} to establish an upper bound on the approximation error caused by using a Krylov subspace of dimension $m$. 
\begin{lemma}\label{lem:bessel_main}
For any $k\geq 0$ and any $\vs\in \mathbb{R}^d$, we have 
  \begin{equation*}
      \min_{j \in \{1,\dots, m\}} \left\{ (\mP_k^{(j)}\vs)^\top \mH_k \mP_k^{(j)}\vs -\vs^\top \mH_k \vs \right\}
      \leq  \frac{2\rho^{(m)}(\mH_k,\vg_k)}{m}\|\vs\|^2.
  \end{equation*}
\end{lemma}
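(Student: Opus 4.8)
The plan is to pass everything into the Lanczos coordinates, where $\mH_k$ acts tridiagonally, and then extract the crucial $1/m$ factor from a weighted AM--GM inequality combined with Bessel's inequality. Let $\vv_1,\dots,\vv_{m+1}$ be the orthonormal vectors produced by Algorithm~\ref{alg:Lanczos} on input $(\mH_k,\vg_k)$, and set $c_i := \vv_i^\top\vs$. The first fact I would record is a closed form for the spectral quantity: for $\vw=\sum_{i=1}^j w_i\vv_i \in \calK^{(j)}$, Proposition~\ref{prop:Lanczos}(ii) gives $\mH_k\vw=\sum_{i=1}^j w_i(\beta_{i+1}\vv_{i+1}+\alpha_i\vv_i+\beta_i\vv_{i-1})$, whose only component outside $\calK^{(j)}=\mathrm{span}\{\vv_1,\dots,\vv_j\}$ is $w_j\beta_{j+1}\vv_{j+1}$. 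Maximizing $|w_j|\beta_{j+1}$ over $\|\vw\|=1$ gives $\sigma^{(j)}(\mH_k,\vg_k)=\beta_{j+1}$, hence $\rho^{(m)}(\mH_k,\vg_k)=\bigl(\prod_{j=1}^m\beta_{j+1}\bigr)^{1/m}$.

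Next I would expand the target. Writing $\vs_j:=\mP_k^{(j)}\vs=\sum_{i\le j}c_i\vv_i$ and $\vs_j^\perp:=(\mI-\mP_k^{(j)})\vs$, orthogonality yields
\[
E_j := (\mP_k^{(j)}\vs)^\top\mH_k\mP_k^{(j)}\vs-\vs^\top\mH_k\vs = -2\,\vs_j^\top\mH_k\vs_j^\perp-(\vs_j^\perp)^\top\mH_k\vs_j^\perp .
\]
By the same tridiagonal action, the only component of $\mH_k\vs_j$ lying outside $\calK^{(j)}$ is $c_j\beta_{j+1}\vv_{j+1}$, so $\vs_j^\top\mH_k\vs_j^\perp=\beta_{j+1}c_jc_{j+1}$. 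Since $f$ is convex we have $\mH_k\succeq 0$, so the last term is nonpositive, leaving the pointwise bound $E_j\le 2\beta_{j+1}|c_jc_{j+1}|$ for every $j\in\{1,\dots,m\}$ (the degenerate case $\beta_{j+1}=0$, where $\calK^{(j)}$ is $\mH_k$-invariant and $E_j\le 0$, is consistent with this).

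Finally I would turn the pointwise bounds into the stated geometric-mean bound. As the minimum of nonnegative numbers is at most their geometric mean,
\[
\min_{1\le j\le m}E_j \le \Bigl(\prod_{j=1}^m 2\beta_{j+1}|c_jc_{j+1}|\Bigr)^{1/m}=2\Bigl(\prod_{j=1}^m\beta_{j+1}\Bigr)^{1/m}\Bigl(\prod_{j=1}^m|c_jc_{j+1}|\Bigr)^{1/m}.
\]
The first factor is exactly $\rho^{(m)}(\mH_k,\vg_k)$, so it remains to prove $\bigl(\prod_{j=1}^m|c_jc_{j+1}|\bigr)^{1/m}\le\|\vs\|^2/m$; this is the heart of the argument and the only genuinely delicate step, since it is what generates the dimension-free $1/m$. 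The telescoping product equals $|c_1||c_{m+1}|\prod_{i=2}^m c_i^2$, which I would view as the weighted geometric mean of $\{c_i^2\}_{i=1}^{m+1}$ with weights $\tfrac1{2m},\tfrac1m,\dots,\tfrac1m,\tfrac1{2m}$ (summing to one). Weighted AM--GM then bounds it by $\tfrac1{2m}c_1^2+\tfrac1m\sum_{i=2}^m c_i^2+\tfrac1{2m}c_{m+1}^2\le\tfrac1m\sum_{i=1}^{m+1}c_i^2$, and Bessel's inequality $\sum_{i=1}^{m+1}c_i^2\le\|\vs\|^2$ closes the gap. Combining the two factors gives $\min_j E_j\le \tfrac{2\rho^{(m)}(\mH_k,\vg_k)}{m}\|\vs\|^2$, as claimed. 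The main obstacle is exactly recognizing the overlapping-product structure that allows weighted AM--GM plus Bessel to yield the extra $1/m$: the naive estimate $\min_j E_j\le 2(\min_j\sigma^{(j)})\|\vs\|^2\le 2\rho^{(m)}\|\vs\|^2$ discards this factor and would be too weak for the dimension-free rate.
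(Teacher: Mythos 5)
Your proposal is correct and follows essentially the same route as the paper: the same decomposition of $E_j$ with the PSD term dropped, the identification $(\mI-\mP_k^{(j)})\mH_k\mP_k^{(j)}=\beta_{j+1}\vv_{j+1}\vv_j^\top$ so that $E_j\le 2\beta_{j+1}|\vv_j^\top\vs||\vv_{j+1}^\top\vs|$, the bound of the minimum by the geometric mean, the identity $\rho^{(m)}=\bigl(\prod_{j}\beta_{j+1}\bigr)^{1/m}$, and AM--GM plus Bessel to extract the $1/m$ factor (the paper uses plain AM--GM followed by $|c_jc_{j+1}|\le\tfrac12(c_j^2+c_{j+1}^2)$ rather than your weighted variant, but these are interchangeable).
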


By combining Lemma~\ref{lem:intermediate_step}
and Lemma~\ref{lem:bessel_main}, we obtain 
\begin{equation}
    f(\vx_{k+1}) \leq f(\vx_k) +  \vg_k^\top \vs  + \frac{1}{2}  \vs^\top \mH_k  \vs+ \frac{M}{6}\| \vs\|^3 + \frac{\rho^{(m)}(\mH_k,\vg_k)}{m}\|\vs\|^2 . \label{eq:key_inequality_krylov}
\end{equation}
Compared with \eqref{eq:key_inequality}, the inequality in \eqref{eq:key_inequality_krylov} has an additional error term $\rho^{(m)}(\mH_k,\vg_k)\|\vs\|^2/m$. 
Given that $\rho^{(m)}(\mH_k,\vg_k)$ can be upper bounded by a constant as shown in Lemma~\ref{lem:L1_bound}, we observe that the error term decays at the rate of $\bigO(1/m)$ as the subspace dimension $m$ increases. Thus, as we should expect, a larger subspace dimension results in a smaller approximation error, which then leads to a faster convergence rate.  %

\subsection{Main Results}\label{subsec:main}
In this section, we will leverage the upper bounds established in the previous section to present the convergence rate of our proposed method in both convex and strongly convex settings. To this end, {let $\vx^*$ be an optimal solution of $f$ and}  define $D$ as
\begin{equation}\label{eq:def_D}
  D := \sup \{\|\vx-\vx^*\|:\; \vx\in \reals^d, \;f(\vx) \leq f(\vx_0)\},
\end{equation}
which is finite since 
the level-set $\{\vx\in \mathbb{R}^d: f(\vx) \leq f(\vx_0)\}$ is bounded due to Assumption \ref{assum:bounded_sublevel}.  In addition, the following quantity will be essential in measuring the convergence rate of our proposed algorithm
    \begin{equation}\label{eq:def_rho_max}
        \rho^{(m)}_{\max{}} = \max_{i\in\{{0,1,\dots,k-1}\}}\{\rho^{(m)}(\mH_{i}, \vg_i)\},
    \end{equation}
    as it provides a universal upper bound on the error corresponding to Condition (B). 
    {By Lemma~\ref{lem:L1_bound}, if we assume that $\nabla^2 f(\vx_k) \preceq L_1 \mI$ as in \citep{Hanzely2020}, then we have $\rho^{(m)}_{\max{}} \leq 2^{1/m} L_1/4$. On the other hand, $\rho^{(m)}_{\max{}}$ can be much smaller than $L_1$ in some special cases as we discuss in Section~\ref{subsec:hessian_structure}.} 
    Next we present the convergence rate of our method in the convex setting. 
\begin{theorem}\label{thm:main}
    Let $\{\vx_k\}_{k\geq 0}$ be the sequence generated by Algorithm~\ref{alg:subspace_cubic} and suppose that Assumptions~\ref{assum:convex}, \ref{assum:bounded_sublevel} and \ref{assum:hessian} hold true. Then, for any $k \geq 0$, we have
    \begin{equation*}
        f(\vx_k) - f(\vx^*) \leq \frac{9 \rho^{(m)}_{\max{}} D^2}{2m} \left(\frac{1}{k}+\frac{1}{k^2}\right) + \frac{9(L_2+M)D^3}{2k^2}.\!
    \end{equation*}
\end{theorem}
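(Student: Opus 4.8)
The plan is to mirror the classical convergence analysis of the cubic regularized Newton method (as in \citet{nesterov2006cubic}), while carrying along the extra error term $\frac{\rho^{(m)}(\mH_k,\vg_k)}{m}\|\vs\|^2$ that distinguishes the Krylov key inequality \eqref{eq:key_inequality_krylov} from the full-dimensional one. First I would record that Algorithm~\ref{alg:subspace_cubic} is monotone: taking $\vz=\vzero$ in the cubic subproblem shows its optimal value is nonpositive, so $f(\vx_{k+1})\leq f(\vx_k)$. Consequently every iterate remains in the sublevel set $\{\vx\in\reals^d:f(\vx)\leq f(\vx_0)\}$, and by the definition of $D$ in \eqref{eq:def_D} we get $\|\vx_k-\vx^*\|\leq D$ for all $k$. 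Throughout I would write $r_k:=f(\vx_k)-f(\vx^*)$.

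The second step is to reduce everything to a scalar one-step recursion. Fix $\alpha\in[0,1]$ and substitute the test direction $\vs=\alpha(\vx^*-\vx_k)$ into \eqref{eq:key_inequality_krylov}. The linear-plus-quadratic part $\alpha\vg_k^\top(\vx^*-\vx_k)+\tfrac{\alpha^2}{2}(\vx^*-\vx_k)^\top\mH_k(\vx^*-\vx_k)$ is precisely the combination that can be controlled by pairing the lower Taylor estimate of Proposition~\ref{prop:taylor} at the point $\vx_k+\alpha(\vx^*-\vx_k)$ with the convexity bound $f\bigl((1-\alpha)\vx_k+\alpha\vx^*\bigr)\leq(1-\alpha)f(\vx_k)+\alpha f(\vx^*)$; this yields $\alpha\vg_k^\top(\vx^*-\vx_k)+\tfrac{\alpha^2}{2}(\vx^*-\vx_k)^\top\mH_k(\vx^*-\vx_k)\leq -\alpha r_k+\tfrac{L_2}{6}\alpha^3\|\vx^*-\vx_k\|^3$. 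Inserting this, and then bounding $\|\vx^*-\vx_k\|\leq D$ together with $\rho^{(m)}(\mH_k,\vg_k)\leq\rho^{(m)}_{\max}$ (legitimate for every index used, by \eqref{eq:def_rho_max}), produces
\begin{equation*}
  r_{k+1}\leq(1-\alpha)r_k+\frac{(L_2+M)D^3}{6}\,\alpha^3+\frac{\rho^{(m)}_{\max}D^2}{m}\,\alpha^2,\qquad\forall\,\alpha\in[0,1].
\end{equation*}

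The third step is to solve this recursion. Abbreviating $A:=\tfrac{(L_2+M)D^3}{6}$ and $B:=\tfrac{\rho^{(m)}_{\max}D^2}{m}$, the target bound is exactly $r_k\leq\tfrac{9B}{2}\bigl(\tfrac1k+\tfrac1{k^2}\bigr)+27A/k^2$, i.e. $r_k\leq\tfrac{C_1}{k}+\tfrac{C_2}{k^2}$ with $C_1=\tfrac{9B}{2}$ and $C_2=\tfrac{9B}{2}+27A$. I would prove this by induction on $k$, using a diminishing stepsize of order $\alpha_k\asymp 1/k$ (for instance $\alpha_k=\tfrac{3}{k+3}$ or a similarly tuned schedule); heuristically, the cubic term $A\alpha^3$ generates the $\calO(1/k^2)$ contribution and the quadratic term $B\alpha^2$ the $\calO(1/(mk))$ contribution, which is why $m$ appears only in front of the slower term. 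The base case follows from the recursion with $\alpha=1$, giving $r_1\leq A+B$, comfortably below $C_1+C_2$.

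I expect the inductive step to be the main obstacle: feeding the hypothesis $r_k\leq\tfrac{C_1}{k}+\tfrac{C_2}{k^2}$ into $(1-\alpha_k)\bigl(\tfrac{C_1}{k}+\tfrac{C_2}{k^2}\bigr)+A\alpha_k^3+B\alpha_k^2$ and verifying it is dominated by $\tfrac{C_1}{k+1}+\tfrac{C_2}{(k+1)^2}$ forces one to match the constants $C_1,C_2$ against the chosen schedule through elementary but delicate estimates (of the type $\tfrac{1-\alpha_k}{k}\le\tfrac{1}{k+1}$ after accounting for the second-order corrections). Because the two residual terms decay at different rates in $k$, the crux is selecting the schedule so that \emph{both} are simultaneously absorbed with the stated coefficients; once the schedule is fixed, the verification is routine algebra that I would relegate to the appendix.
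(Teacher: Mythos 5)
Your proposal is correct and follows essentially the same route as the paper: the monotonicity/level-set argument giving $\|\vx_k-\vx^*\|\le D$, the substitution $\vs=\alpha(\vx^*-\vx_k)$ into \eqref{eq:key_inequality_krylov} combined with Proposition~\ref{prop:taylor} and convexity, and the resulting scalar recursion $r_{k+1}\le(1-\alpha)r_k+A\alpha^3+B\alpha^2$ are exactly what appear in Appendix~\ref{appen:main_theorem}. The only difference is bookkeeping: where you propose a delicate induction with a tuned schedule $\alpha_k\asymp 3/k$, the paper takes $\alpha_{k+1}=a_{k+1}/A_{k+1}$ with $a_k=k^2$ and $A_k=\sum_{j\le k}j^2\ge k^3/3$, multiplies the recursion by $A_{k+1}$ so that the $(1-\alpha_{k+1})$ factor collapses to $A_k/A_{k+1}$ and everything telescopes --- this sidesteps the constant-matching you identify as the main obstacle and is the step you would still need to carry out to complete your version.
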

Theorem~\ref{thm:main} shows that $f(\vx_k)-f(\vx^*)$ converges at a global convergence rate of $\calO(\frac{1}{mk}+\frac{1}{k^2})$. 
As a corollary, to reach an error of $\epsilon>0$, the number of required iterations can be upper bounded by $\calO(\frac{1}{m\epsilon}+\frac{1}{\sqrt{\epsilon}})$, which is indeed independent of the problem's dimension $d$. To the best of our knowledge, this is the first subspace second-order method that attains a dimension-independent convergence rate. {{In comparison, SSCN in \citep{Hanzely2020} achieves an iteration complexity of $\calO(\frac{d-m}{m}\cdot\frac{1}{\epsilon}+\frac{d}{m}\cdot\frac{1}{\sqrt{\epsilon}})$.
{Note that when we deal with subspace second-order methods, we are particularly interested in scenarios where the subspace dimension $m$ is a fixed constant much smaller than the problem dimension $d$, i.e., $m\ll d$. This is driven by a practical necessity to ensure a scalable per-iteration computational cost as the problem dimension increases. As our convergence bounds suggest, in the regime where $m \ll d$, our iteration complexity is lower than the one for SSCN by a factor of $d$.}
 }}

Moreover, a better complexity can be achieved if the function $f$ is strongly convex with parameter $\mu>0$, that is, $\nabla^2 f(\vx) \succeq \mu \mI$ for all $\vx\in \reals^d$. As we show in Theorem~\ref{thm:strongly-convex}, in this setting Algorithm~\ref{alg:subspace_cubic} can achieve a linear rate of convergence. 

\begin{theorem}\label{thm:strongly-convex}
    Let $\{\vx_k\}_{k\geq 0}$ be the sequence generated by Algorithm~\ref{alg:subspace_cubic} and suppose Assumptions~\ref{assum:convex}, \ref{assum:bounded_sublevel}, and \ref{assum:hessian} hold true. Moreover, assume that $f$ is $\mu$-strongly convex. Then, the number of iterations required to reach $\delta_{k} := f(\vx_k)-f(\vx^*) \leq \epsilon$ can be upper bounded by 
    \begin{equation*}
        k = \bigO\Biggl(\biggl(\frac{\rho_{\max}^{(m)}}{m \mu}+1\biggr)\log\frac{\delta_{0}}{\epsilon} + \frac{\sqrt{L_2+M}\delta_{0}^{0.25}}{\mu^{0.75}}\Biggr).
    \end{equation*}
\end{theorem}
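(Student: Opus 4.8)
The plan is to run a two-regime analysis separated by a threshold $\bar\delta \asymp \mu^3/(L_2+M)^2$: a \emph{far phase} where $\delta_k > \bar\delta$, which I expect to contribute the additive term $\sqrt{L_2+M}\,\delta_0^{0.25}/\mu^{0.75}$, and a \emph{near phase} where $\delta_k \le \bar\delta$, which will contribute the $\bigl(\rho^{(m)}_{\max{}}/(m\mu)+1\bigr)\log(\delta_0/\epsilon)$ term. Throughout I use that the method is monotone, $f(\vx_{k+1})\le f(\vx_k)$ (take $\vs=\vzero$ in \eqref{eq:key_inequality_krylov}), so every iterate stays in the level set of $\vx_0$; hence $\|\vx_k-\vx^*\|\le D$ and, by Assumption~\ref{assum:hessian}, $\|\mH_k\|\le \|\nabla^2 f(\vx^*)\|+L_2 D =: L_1<\infty$. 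I also use $\mu$-strong convexity in two forms: $\tfrac\mu2\|\vx_k-\vx^*\|^2\le \delta_k$ and the gradient-domination (Polyak--\L{}ojasiewicz) bound $\|\vg_k\|^2\ge 2\mu\,\delta_k$.

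I will need two building blocks. First, an interpolation inequality: taking $\vs=\alpha(\vx^*-\vx_k)$ with $\alpha\in[0,1]$ in \eqref{eq:key_inequality_krylov}, using convexity of the scalar map $\alpha\mapsto \alpha\vg_k^\top\vu+\tfrac{\alpha^2}2\vu^\top\mH_k\vu$ (with $\vu=\vx^*-\vx_k$) together with the lower Taylor bound of Proposition~\ref{prop:taylor}, and bounding $\alpha^3\le\alpha$, I obtain
\begin{equation}
\delta_{k+1}\le(1-\alpha)\delta_k+\frac{\alpha(L_2+M)}{6}\|\vx_k-\vx^*\|^3+\frac{\rho^{(m)}_{\max{}}\alpha^2}{m}\|\vx_k-\vx^*\|^2. \tag{$\star$}
\end{equation}
Second, two facts from the cubic step itself: since $\vs_k=\mV_k\vz_k$ minimizes the reduced cubic model and $\vs=\vzero$ is feasible, stationarity of $\vz_k$ and $\mH_k\succeq\vzero$ give the decrease $\delta_k-\delta_{k+1}\ge \tfrac{M}{12}\|\vs_k\|^3$, while the reduced optimality condition $\tilde{\vg}_k+\tilde{\mH}_k\vz_k+\tfrac M2\|\vz_k\|\vz_k=\vzero$ yields $\|\tilde{\vg}_k\|\le\|\tilde{\mH}_k\|\,\|\vs_k\|+\tfrac M2\|\vs_k\|^2\le L_1\|\vs_k\|+\tfrac M2\|\vs_k\|^2$.

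The main obstacle is the far phase, where the cubic term in $(\star)$ overwhelms $\delta_k$ and no contraction can be extracted from $(\star)$ alone; there I must instead mimic Nesterov's direct cubic argument. In the full-dimensional method one controls $\|\nabla f(\vx_{k+1})\|$, but after a subspace step the full gradient need not be small in the orthogonal complement, so that route is unavailable. This is precisely where the Krylov construction is essential: the subspace contains the gradient, $\vg_k\in\calK_m(\mH_k,\vg_k)$, hence $\mV_k\mV_k^\top\vg_k=\vg_k$ and $\|\tilde{\vg}_k\|=\|\vg_k\|$ (cf.\ Proposition~\ref{prop:Lanczos}). Combining this identity with the reduced optimality bound and the gradient-domination inequality gives $\sqrt{2\mu\,\delta_k}\le L_1\|\vs_k\|+\tfrac M2\|\vs_k\|^2$; for $\delta_k>\bar\delta$ the quadratic term dominates, so $\|\vs_k\|^2\gtrsim \sqrt{\mu\delta_k}/M$, and feeding this into $\delta_k-\delta_{k+1}\ge\tfrac M{12}\|\vs_k\|^3$ produces a bound of the form $\delta_k-\delta_{k+1}\ge c\,\mu^{3/4}(L_2+M)^{-1/2}\delta_k^{3/4}$. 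Telescoping $\delta_k^{1/4}$ (which then decreases by at least a fixed amount each step) shows the far phase lasts at most $\bigO\bigl(\sqrt{L_2+M}\,\delta_0^{0.25}/\mu^{0.75}\bigr)$ iterations before $\delta_k\le\bar\delta$.

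Finally, in the near phase I return to $(\star)$ and use $\|\vx_k-\vx^*\|^2\le 2\delta_k/\mu$. The choice $\bar\delta\asymp\mu^3/(L_2+M)^2$ makes the cubic term at most $\tfrac\alpha2\delta_k$, so $(\star)$ reduces to $\delta_{k+1}\le\bigl(1-\tfrac\alpha2+\tfrac{2\rho^{(m)}_{\max{}}\alpha^2}{m\mu}\bigr)\delta_k$; minimizing the scalar factor over $\alpha\in[0,1]$ with $\alpha=\min\{1,\,m\mu/(8\rho^{(m)}_{\max{}})\}$ gives a contraction factor $1-c\min\{1,\,m\mu/\rho^{(m)}_{\max{}}\}$, i.e.\ a linear rate with effective condition number $\rho^{(m)}_{\max{}}/(m\mu)$. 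This yields $\bigO\bigl((\rho^{(m)}_{\max{}}/(m\mu)+1)\log(\bar\delta/\epsilon)\bigr)$ near-phase iterations, and since $\bar\delta\le\delta_0$ this is at most $\bigO\bigl((\rho^{(m)}_{\max{}}/(m\mu)+1)\log(\delta_0/\epsilon)\bigr)$. Summing the two phase counts gives the claim. The only delicate points are (i) confirming the quadratic term dominates throughout the far phase so the $\delta_k^{3/4}$ decrease is valid, and (ii) fixing $\bar\delta$ so that both the far-phase decrease and the near-phase contraction hold on their respective sides; both become routine once the threshold is pinned down.
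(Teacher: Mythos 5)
Your near-phase argument is sound, and the interpolation inequality $(\star)$ is a correct consequence of \eqref{eq:key_inequality_krylov} together with Proposition~\ref{prop:taylor}; but the far phase has a genuine gap, and it is exactly the point you flag as ``delicate point (i)''. From the reduced optimality condition you get $\sqrt{2\mu\delta_k}\le \|\tilde{\mH}_k\|\,\|\vs_k\|+\tfrac M2\|\vs_k\|^2$, and which term dominates is governed by whether $\|\vs_k\|\gtrless 2\|\tilde{\mH}_k\|/M$ --- a condition on the \emph{step length}, not on $\delta_k$. One can only force the quadratic term to dominate when $\delta_k\gtrsim \|\tilde{\mH}_k\|^4/(M^2\mu)$, which is a threshold involving the Hessian's largest eigenvalue $L_1$ and is in general far above your $\bar\delta\asymp\mu^3/(L_2+M)^2$. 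In the intermediate regime the linear term may dominate, and then the best decrease you can extract is $\delta_k-\delta_{k+1}\gtrsim M\mu^{3/2}\delta_k^{3/2}/L_1^3$, whose telescoped iteration count is of order $L_1^3(L_2+M)/(M\mu^3)$ --- an additive term depending on $L_1$ that does not appear in the theorem. The inequality $(\star)$ cannot rescue this regime either, since for $\delta_k>\bar\delta$ its cubic term exceeds $\alpha\delta_k$ for every $\alpha$. So as written your argument proves a weaker bound than claimed.

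The paper avoids this entirely by never touching gradient norms: it runs a restart argument on top of Theorem~\ref{thm:main}. The iterates are partitioned into stages; within stage $i$ strong convexity gives $D_i\le\sqrt{2(f(\vx_{i,0})-f(\vx^*))/\mu}$, and plugging this into the convex rate shows that a stage of length $k_i=\calO\bigl(\rho^{(m)}_{\max}/(m\mu)+(L_2+M)^{1/2}(f(\vx_{i,0})-f(\vx^*))^{1/4}\mu^{-3/4}\bigr)$ halves the gap. Summing the second term over stages gives a geometric series dominated by $\sqrt{L_2+M}\,\delta_0^{1/4}/\mu^{3/4}$, and the first term times the number of stages gives the $\bigl(\rho^{(m)}_{\max}/(m\mu)+1\bigr)\log(\delta_0/\epsilon)$ factor. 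If you want to keep a two-phase structure, the cleanest fix is to replace your far-phase gradient-norm argument with this restarted application of the convex rate (which is valid at every suboptimality level), rather than trying to lower-bound $\|\vs_k\|$ from the stationarity condition.
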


\begin{remark}
    {Similar to \citet{Hanzely2020}, Theorem~\ref{thm:strongly-convex} continues to hold if we replace the strong convexity of $f$ with the weaker assumption that $\frac{\mu}{2}\|\vx-\vx^*\|^2 \leq f(\vx) - f(\vx^*)$ for any $\vx \in \mathbb{R}^d$.}
\end{remark}
When the target accuracy $\epsilon$ is sufficiently small, Theorem~\ref{thm:strongly-convex} shows that the iteration complexity is dominated by $\frac{\rho_{\max}^{(m)}}{m \mu} \log \frac{1}{\epsilon}$, where the factor $\frac{\rho_{\max}^{(m)}}{m \mu}$ can be regarded as the effective condition number. 
In particular, consider the special case where 
 $\mu \mI \preceq \nabla^2 f(\vx) \preceq L_1 \mI$ for all $\vx\in \reals^d$. By Lemma~\ref{lem:L1_bound}, we always have $\frac{\rho_{\max}^{(m)}}{m \mu} \leq \frac{L_1}{m \mu}$. On the other hand, under the same setting as in \citep{Hanzely2020}, SSCN achieves an iteration complexity of $\calO \left(\left(\frac{d-m}{m}\frac{L_1}{\mu}+\frac{d}{m}\right) \log \frac{1}{\epsilon} \right)$. Thus, we shave a factor of $d$ from their iteration complexity bound when $m= \calO(1)$. Moreover, as we discuss in the next section, the quantity $\rho^{(m)}_{\max}$ can be much smaller than $L_1$ when the Hessian admits some specific spectral structure, leading to an even faster convergence rate. 

\subsection{Special Cases}\label{subsec:hessian_structure}
As shown in Theorems~\ref{thm:main} and~\ref{thm:strongly-convex}, the quantity $\rho^{(m)}_{\max}$ plays an important role in the convergence rate of our proposed method. {Moreover, by its definition in \eqref{eq:def_rho_max}, we can upper bound $\rho^{(m)}(\mathbf{H}_i,\bm{g}_i)$ for $i=0,1,\dots,k-1$ individually and then take the maximum to establish an upper bound on $\rho^{(m)}_{\max}$. }
In this section, we will show that $\rho^{(m)}(\mH_i,\vg_i)$ is closely related to the structure of the eigenspectrum of the Hessian matrix $\mH_i$, {and it can be much smaller than the worst-case upper bound in Lemma~\ref{lem:L1_bound} in some special cases.} The proofs in this section are deferred to Appendix~\ref{appen:rho_m}.

To begin with, we derive an alternative expression for $\rho^{(m)}(\mA,\vb)$ in \eqref{eq:def_rho} in terms of matrix polynomials. {
Specifically, for a polynomial of degree $m$ given by $p(x) = x^m+ \sum_{i=0}^{m-1} c_i x^i$, we define $p(\mA) :=  \mA^m+ \sum_{i=0}^{m-1} c_i \mA^i$, where we observe the convention that $\mA^0 = \mI$.  Moreover, we define $\mathcal{M}_m:=\{x^m+\sum_{i=0}^{m-1} c_i x^i \mid c_0,\dots,c_{m-1} \in \reals\}$ as the set of monic polynomials of degree $m$. With these notations, we are ready to state Lemma~\ref{lem:matrix_polynomial}.}  
\begin{lemma}\label{lem:matrix_polynomial}
    The quantity $\rho^{(m)}(\mA,\vb)$ in \eqref{eq:def_rho} can be equivalently expressed as 
    \begin{equation*}
        \rho^{(m)}(\mA,\vb) = \min_{c_0,\dots,c_{m-1} \in \mathbb{R}} \left\|\left(\mA^m + \sum_{i=0}^{m-1} c_i \mA^{i}\right)\frac{\vb}{\|\vb\|} \right\|^{\frac{1}{m}} = \min_{p \in \mathcal{M}_m } \left\|p(\mA) \frac{\vb}{\|\vb\|} \right\|^{\frac{1}{m}}. 
    \end{equation*}

\end{lemma}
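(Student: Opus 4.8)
The plan is to prove the identity by showing that both sides equal $\left(\prod_{j=1}^{m}\beta_{j+1}\right)^{1/m}$, where $\{\alpha_j\}$ and $\{\beta_j\}$ are the Lanczos coefficients produced by running Algorithm~\ref{alg:Lanczos} on the input $(\mA,\vb)$. Note first that the two displayed expressions in the lemma coincide by the very definitions of $\mathcal{M}_m$ and of $p(\mA)$, so only one equality has to be established. Throughout I work in the generic case where $\calK_m(\mA,\vb)$ has full dimension $m$, i.e.\ $\beta_{j+1}>0$ for $j=1,\dots,m-1$; the degenerate case of early Lanczos breakdown is addressed at the end.

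First I would compute $\sigma^{(j)}(\mA,\vb)$ exactly. Writing a unit vector $\vw\in\calK^{(j)}$ in the Lanczos basis as $\vw=\sum_{i=1}^{j} w_i\vv_i$ (Proposition~\ref{prop:Lanczos}, item (i)), the three-term recurrence $\mA\vv_i=\beta_{i+1}\vv_{i+1}+\alpha_i\vv_i+\beta_i\vv_{i-1}$ (item (ii)) shows that $\mA\vv_i\in\calK^{(j)}$ for every $i<j$, whereas $\mA\vv_j$ contributes the single out-of-subspace term $\beta_{j+1}\vv_{j+1}$. Hence the component of $\mA\vw$ orthogonal to $\calK^{(j)}$ equals $w_j\beta_{j+1}\vv_{j+1}$, so $\mathrm{dist}(\mA\vw,\calK^{(j)})=|w_j|\beta_{j+1}$. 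Maximizing over $\sum_i w_i^2=1$ gives $\sigma^{(j)}(\mA,\vb)=\beta_{j+1}$, and therefore $\rho^{(m)}(\mA,\vb)=\left(\prod_{j=1}^{m}\beta_{j+1}\right)^{1/m}$.

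Next I would evaluate the polynomial minimization. By induction on the recurrence one shows that each Lanczos vector satisfies $\vv_{j+1}=q_j(\mA)\vv_1$ for a polynomial $q_j$ of degree exactly $j$ whose leading coefficient is $\prod_{i=2}^{j+1}\beta_i^{-1}$; equivalently the monic polynomial $\hat q_j:=\bigl(\prod_{i=2}^{j+1}\beta_i\bigr)q_j\in\mathcal{M}_j$ obeys $\hat q_j(\mA)\vv_1=\bigl(\prod_{i=2}^{j+1}\beta_i\bigr)\vv_{j+1}$. Taking $j=m$ and recalling $\vv_1=\vb/\|\vb\|$, the vector $\hat q_m(\mA)\vv_1$ is a scalar multiple of $\vv_{m+1}$, hence orthogonal to $\calK_m(\mA,\vb)=\mathrm{span}\{\vv_1,\dots,\vv_m\}$, and $\|\hat q_m(\mA)\vv_1\|=\prod_{j=1}^{m}\beta_{j+1}$. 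For any other monic $p\in\mathcal{M}_m$ the difference $p-\hat q_m$ has degree at most $m-1$, so $(p-\hat q_m)(\mA)\vv_1\in\calK_m(\mA,\vb)$; since this lies in the subspace while $\hat q_m(\mA)\vv_1$ is orthogonal to it, the Pythagorean identity gives $\|p(\mA)\vv_1\|^2=\|\hat q_m(\mA)\vv_1\|^2+\|(p-\hat q_m)(\mA)\vv_1\|^2\ge\|\hat q_m(\mA)\vv_1\|^2$. Thus the minimum over $\mathcal{M}_m$ is attained at $\hat q_m$ and equals $\prod_{j=1}^{m}\beta_{j+1}$. Raising both computations to the power $1/m$ yields the claimed identity.

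I expect the main obstacle to be the bookkeeping in the induction that identifies $\vv_{j+1}$ with the normalized monic polynomial $\hat q_j$ applied to $\vv_1$ and pins down the leading coefficient $\prod_{i=2}^{j+1}\beta_i^{-1}$, since this is precisely the bridge coupling the spectral quantity $\rho^{(m)}$ to the polynomial minimum; once it is in hand, both the Pythagoras step and the evaluation of $\sigma^{(j)}$ are short. A secondary point needing care is the degenerate case $\beta_{j+1}=0$ for some $j\le m$: there $\calK_j(\mA,\vb)$ becomes $\mA$-invariant of dimension below $m$, forcing $\rho^{(m)}(\mA,\vb)=0$ and simultaneously admitting a monic degree-$m$ polynomial that annihilates $\vb$ (the minimal polynomial of $\mA$ with respect to $\vb$, times a suitable power of $x$), so both sides vanish and the identity persists.
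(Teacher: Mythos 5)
Your proof is correct. Both you and the paper first identify $\sigma^{(j)}(\mA,\vb)=\beta_{j+1}$ (you do this by expanding a unit vector in the Lanczos basis and isolating the single out-of-subspace term $\beta_{j+1}\vv_{j+1}$; the paper does the equivalent computation via the matrix identity $(\mI-\mP^{(j)})\mA\mP^{(j)}=\beta_{j+1}\vv_{j+1}\vv_j^\top$), so that $\rho^{(m)}(\mA,\vb)=\bigl(\prod_{j=1}^m\beta_{j+1}\bigr)^{1/m}$. Where you genuinely diverge is in connecting this product to the polynomial minimum. The paper proves (in its Lemma~\ref{lem:beta_product}(iii)) that each $\beta_{j+1}$ equals a ratio of successive distances $\mathrm{dist}(\mA^{j}\vv_1,\calK_j)/\mathrm{dist}(\mA^{j-1}\vv_1,\calK_{j-1})$, telescopes the product to get $\min_{\vu\in\calK_m(\mA,\vb)}\|\mA^m\vv_1-\vu\|$, and then reparametrizes $\vu$ by the coefficients of a monic polynomial. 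You instead prove by induction on the three-term recurrence that $\vv_{j+1}=q_j(\mA)\vv_1$ with leading coefficient $\prod_{i=2}^{j+1}\beta_i^{-1}$, so the normalized monic Lanczos polynomial $\hat q_m$ satisfies $\|\hat q_m(\mA)\vv_1\|=\prod_{j=1}^m\beta_{j+1}$ with $\hat q_m(\mA)\vv_1\perp\calK_m(\mA,\vb)$, and then a Pythagorean argument shows $\hat q_m$ is the minimizer over $\mathcal{M}_m$. The two routes encode the same fact (multiplicatively via distance ratios versus via leading coefficients of orthogonal polynomials), but yours is self-contained, exhibits the explicit minimizer, and handles the degenerate early-breakdown case ($\beta_{j+1}=0$ for some $j\le m$) more carefully than the paper does; the paper's telescoping is slightly more elementary in that it never needs the polynomial representation of the Lanczos vectors.
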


Lemma~\ref{lem:matrix_polynomial} implies that $\rho^{(m)}(\mA,\vb) \leq \min_{p \in \mathcal{M}_m } \left\|p(\mA) \right\|_{\op}^{\frac{1}{m}}$, where $\|\cdot\|_{\op}$ denotes the operator norm of the matrix. Furthermore, assume that $\mA$ has $r$ distinct eigenvalues $\lambda_1 > \lambda_2 > \dots > \lambda_r$, where $r\leq d$. Then, we have 
\begin{equation*}
    \rho^{(m)}(\mA,\vb) \leq \min_{{p \in \mathcal{M}_m} }\max_{i\in\{1,2,\dots,r\}} \; |p(\lambda_i) |^{\frac{1}{m}}.
\end{equation*}
Hence, by making suitable assumptions on the eigenvalues $\lambda_1,\lambda_2,\dots,\lambda_r$ and choosing the polynomial~$p$ accordingly, we can obtain different bounds on $\rho^{(m)}$.

\begin{remark}
    {It is worth noting that the matrix approximation problem $\min_{p \in \mathcal{M}_m} \|p(\mA)\|_{\op}$ has been studied before \citep{greenbaum1994gmres}, and the polynomial that achieves the minimum is also known as the \emph{Chebyshev polynomial of} $\mA$ \citep{toh1998chebyshev,faber2010chebyshev}. However, to the best of our knowledge, no explicit upper bound on its optimal value has been reported in the literature. }
\end{remark}

\noindent \textbf{Example I.} 
Our first result shows that $\rho^{(m)}$ can be upper bounded by the geometric mean of the largest $m$ distinct eigenvalues of the Hessian $\mH$.

\begin{lemma}\label{lem:eigen_decay}
   Assume that the Hessian $\mH$ has $r$ distinct eigenvalues in decreasing order: $\lambda_1 > \lambda_2 > \dots > \lambda_r$.
  For any $\vg\in \reals^d$, $\rho^{(m)}(\mH,\vg) \leq \left(\prod_{i=1}^m \lambda_i\right)^{\frac{1}{m}}$ when $m< r$ and $\rho^{(m)}(\mH,\vg) = 0$ when $m\geq r$.  
\end{lemma}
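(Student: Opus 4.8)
The plan is to work entirely through the polynomial reformulation of $\rho^{(m)}$. By Lemma~\ref{lem:matrix_polynomial} together with the eigenvalue estimate displayed immediately after it, namely $\rho^{(m)}(\mH,\vg) \leq \min_{p \in \mathcal{M}_m} \max_{i \in \{1,\dots,r\}} |p(\lambda_i)|^{1/m}$, it suffices to exhibit, for a single well-chosen monic polynomial $p \in \mathcal{M}_m$, a good upper bound on $\max_{i \in \{1,\dots,r\}} |p(\lambda_i)|$. I would split into the two regimes $m \geq r$ and $m < r$ and in each case choose $p$ by hand. The crucial structural fact I will use is that, since $f$ is convex (Assumption~\ref{assum:convex}), the Hessian $\mH = \nabla^2 f$ is positive semidefinite, so all its eigenvalues satisfy $\lambda_i \geq 0$; this nonnegativity is exactly what makes the termwise estimate below go through.

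For $m \geq r$, I would take $p(x) = x^{m-r}\prod_{i=1}^r (x-\lambda_i)$, which is monic of degree $m$ and vanishes at every eigenvalue $\lambda_1,\dots,\lambda_r$. Hence $\max_i |p(\lambda_i)| = 0$. Equivalently, since $\mH$ is symmetric and hence diagonalizable with spectrum $\{\lambda_1,\dots,\lambda_r\}$, this $p$ annihilates $\mH$, so $p(\mH)=0$ and the exact expression in Lemma~\ref{lem:matrix_polynomial} gives $\rho^{(m)}(\mH,\vg)=0$ directly.

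For $m < r$, the natural choice is the monic degree-$m$ polynomial $p(x) = \prod_{j=1}^m (x-\lambda_j)$ formed from the $m$ largest eigenvalues. Then $p(\lambda_i)=0$ for $i \leq m$, while for each $i > m$ we have $|p(\lambda_i)| = \prod_{j=1}^m (\lambda_j - \lambda_i)$, using that $\lambda_j > \lambda_i$ whenever $j \leq m < i$. Here is the one estimate that does the work: since $0 \leq \lambda_i$, each factor obeys $0 < \lambda_j - \lambda_i \leq \lambda_j$, and multiplying these termwise bounds over $j=1,\dots,m$ gives $\prod_{j=1}^m (\lambda_j - \lambda_i) \leq \prod_{j=1}^m \lambda_j$. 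Taking the maximum over $i > m$ yields $\max_i |p(\lambda_i)| \leq \prod_{j=1}^m \lambda_j$, and raising to the power $1/m$ gives $\rho^{(m)}(\mH,\vg) \leq \bigl(\prod_{j=1}^m \lambda_j\bigr)^{1/m}$, as claimed.

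The computation is short, so I do not anticipate a genuine obstacle; the only point requiring care is the reliance on $\lambda_i \geq 0$ (and hence on convexity) to pass from the termwise inequality $\lambda_j - \lambda_i \leq \lambda_j$ to the product bound — without nonnegativity the factors $\lambda_j - \lambda_i$ could exceed $\lambda_j$ and the estimate would break. A secondary bookkeeping point is to ensure the polynomial in the $m \geq r$ case is genuinely of degree $m$, which the extra $x^{m-r}$ factor guarantees, so that it indeed lies in $\mathcal{M}_m$.
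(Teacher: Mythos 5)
Your proposal is correct and follows essentially the same route as the paper's proof: both pass through the polynomial characterization of Lemma~\ref{lem:matrix_polynomial}, choose $\hat p(\lambda)=\prod_{j=1}^m(\lambda-\lambda_j)$ in the case $m<r$ and bound $|\hat p(\lambda_i)|\leq\prod_{j=1}^m\lambda_j$ for $i>m$ using $\lambda_i\geq 0$, and annihilate the full spectrum with a degree-$m$ monic polynomial when $m\geq r$ (the paper pads with $(\lambda-\lambda_1)^{m-r}$ rather than your $x^{m-r}$, an immaterial difference). Your explicit flagging of where nonnegativity of the eigenvalues enters matches the paper's implicit use of $0\leq\lambda_i\leq\lambda_m$.
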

Hence, if the eigenvalues of $\mH$ decay fast, then the bound in Lemma~\ref{lem:eigen_decay} can be much smaller than the one in Lemma~\ref{lem:L1_bound}. 
Moreover, as an important corollary, 
if the number of nonzero eigenvalues of the Hessian is at most $m-1$, then $\rho^{(m)}(\mH,\vg)=0$ and the convergence rate of our algorithm becomes $\calO(1/k^2)$. This means that, as a special case, if the rank of Hessian is at most $m-1$, the convergence rate of our Krylov CRN method with dimension $m$ is as good as the full-dimensional CRN method. On the other hand, random subspace methods such as SSCN are unable to recover such a result when the Hessian has a low-rank structure. 

\begin{remark}\label{rem:maximal_Krylov}
    {The second result in Lemma~\ref{lem:eigen_decay} can be further strengthened: we have $\rho^{(m)}(\mH,\vg)=0$ \emph{if and only if} $m \geq r_0$, where $r_0$ denotes the dimension of the maximal Krylov subspace generated by $\mH$ and $\vg$.
    The proof can be found in Appendix~\ref{appen:eigen_decay}. %
    } 
\end{remark}

\noindent \textbf{Example II.}
The second case we study is when the eigenvalues of the Hessian $\mH$ are concentrated in two clusters $[0,\Delta]$ and $[L_1-\Delta,L_1]$, where $\Delta$ is a small constant (see \citet{goujaud2022super}).
\begin{lemma}\label{lem:cluster}
  Assume that all the eigenvalues of the Hessian $\mH$ lie within the two intervals $[0,\Delta]$ and $[L_1-\Delta, L_1]$ for some $L_{1} > \Delta > 0$. Then, when $m$ is even, for any $\vg\in \reals^d$ we have $\rho^{(m)}(\mH,\vg) \leq 2^{1/m}\sqrt{\Delta(L_1-\Delta)}/2$. 
\end{lemma}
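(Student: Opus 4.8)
The plan is to reduce the claim to an explicit polynomial approximation problem and then construct a near-optimal monic polynomial by hand. By Lemma~\ref{lem:matrix_polynomial} and the discussion immediately following it, since all eigenvalues of $\mH$ lie in the set $S := [0,\Delta] \cup [L_1-\Delta, L_1]$, we have the bound
\begin{equation*}
  \rho^{(m)}(\mH,\vg) \;\leq\; \min_{p \in \mathcal{M}_m}\, \max_{x \in S}\, |p(x)|^{\frac{1}{m}}.
\end{equation*}
Thus it suffices to exhibit a single monic polynomial $p$ of degree $m$ for which $\max_{x\in S}|p(x)| \leq 2\bigl(\Delta(L_1-\Delta)/4\bigr)^{m/2}$; taking the $m$-th root then yields exactly $2^{1/m}\sqrt{\Delta(L_1-\Delta)}/2$.

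The key structural observation I would exploit is that the two clusters are reflections of one another about the midpoint $L_1/2$, since the map $x \mapsto L_1 - x$ sends $[0,\Delta]$ onto $[L_1-\Delta,L_1]$. This symmetry suggests the substitution $u = (x - L_1/2)^2$, which folds both clusters onto a \emph{single} interval in the $u$ variable. First I would verify that for $x\in S$ one has $u \in [\alpha,\beta]$ with $\alpha = (L_1/2 - \Delta)^2$ and $\beta = (L_1/2)^2$, and compute the crucial identity $\beta - \alpha = (L_1/2)^2 - (L_1/2-\Delta)^2 = \Delta(L_1-\Delta)$. I would then let $\tilde p$ be the degree-$(m/2)$ monic polynomial of minimal sup-norm on $[\alpha,\beta]$, namely the suitably shifted and scaled Chebyshev polynomial, whose sup-norm on $[\alpha,\beta]$ equals $2\bigl((\beta-\alpha)/4\bigr)^{m/2} = 2\bigl(\Delta(L_1-\Delta)/4\bigr)^{m/2}$.

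Finally I would set $p(x) := \tilde p\bigl((x - L_1/2)^2\bigr)$ and verify two things: that $p$ is monic of degree exactly $m$, and that its sup-norm over $S$ inherits the bound above. Monicity follows because the leading term $u^{m/2}$ of $\tilde p$ becomes $(x-L_1/2)^m$, whose top-degree coefficient in $x$ is $1$; note this step is precisely where the hypothesis that $m$ is even enters, ensuring $m/2$ is an integer and $p$ a genuine polynomial in $\mathcal{M}_m$. Since $\max_{x\in S}|p(x)| = \max_{u\in[\alpha,\beta]}|\tilde p(u)|$ by the range computation, the bound transfers directly, and raising to the $1/m$ power gives the stated estimate. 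The main obstacle is identifying the symmetrizing substitution $u=(x-L_1/2)^2$ and confirming it maps both clusters onto a common interval of length exactly $\Delta(L_1-\Delta)$; once this reduction is in place, the remaining Chebyshev sup-norm estimate and the monicity/degree bookkeeping are routine.
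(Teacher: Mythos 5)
Your proposal is correct and follows essentially the same route as the paper: bound $\rho^{(m)}$ via Lemma~\ref{lem:matrix_polynomial} and exhibit a monic degree-$m$ polynomial built from the Chebyshev polynomial $T_{m/2}$. Your folding substitution $u=(x-L_1/2)^2$ produces, after unwinding the shifted Chebyshev on $[\alpha,\beta]$, exactly the paper's polynomial $2\bigl(\Delta(L_1-\Delta)/4\bigr)^{m/2}T_{m/2}\bigl(1-\tfrac{2\lambda(L_1-\lambda)}{\Delta(L_1-\Delta)}\bigr)$, so the two arguments coincide up to how the polynomial is motivated.
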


Lemma~\ref{lem:cluster} shows that if the eigenvalues of the Hessian are close to either $0$ or $L_1$ with an error of size~$\Delta$, then the convergence rate of our method becomes $\calO\left(\frac{\Delta(L_1-\Delta)}{mk}+\frac{1}{k^2}\right)$. As a result, to reach $\epsilon$-accuracy, the overall iteration complexity of our method in the convex setting becomes $\calO\left( \frac{\Delta(L_1-\Delta) }{\epsilon }+\frac{1}{\sqrt{\epsilon}} \right) $. Thus, if the size of the cluster satisfies $\Delta=\calO(\sqrt{\epsilon})$, the overall iteration complexity of our method becomes $\calO\left( \frac{1}{\sqrt{\epsilon}} \right)$, matching the one for the classic CRN method.

\section{Numerical Experiments}\label{sec:numerical}

In this section, we compare the numerical performance of our Krylov CRN method with full-dimensional CRN \citep{nesterov2006cubic} and SSCN \citep{Hanzely2020}. We focus on logistic regression problems on LIBSVM datasets~\citep{chang2011libsvm}. In our experiments, we use three different datasets representing low, medium, and high-dimensional problems.  Specifically, we use ``w8a", ``rcv1", and ``news20", with dimensions $d=300$, $d=47,236$, and $d=1,355,191$, respectively. Note that the sample sizes $n$ of these datasets are almost comparable with each other. 
 We set the subspace dimension to be $m = 10$ in our method, while we vary the subspace dimension in SSCN from $10$ to $1,000$ to ensure its best performance. Moreover, since the Lipschitz constant of the Hessian is unknown in advance,  
we use a backtracking line search scheme to select the regularization parameters. All experiments are run on a MacBook Pro with an Apple M1 chip and 16GB RAM.  

\begin{figure}[t!]
    \vspace{-2mm}
    \centering
    \subfloat[]
    {\includegraphics[width=0.32\linewidth]{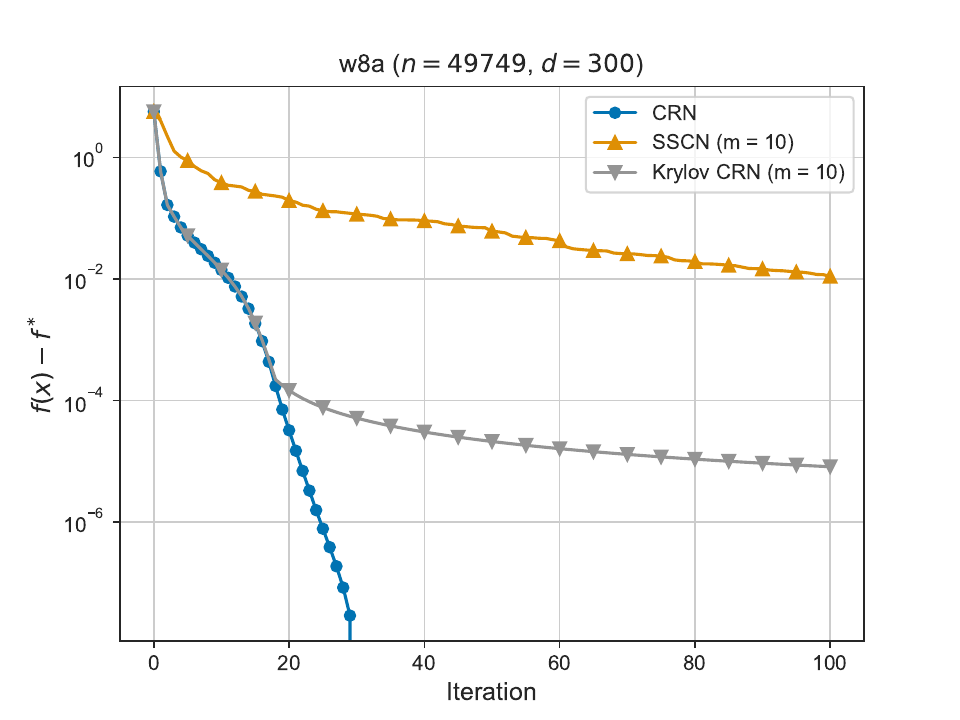}\label{fig:iteration_w8a}}
  \subfloat[]
  {\includegraphics[width=0.32\linewidth]{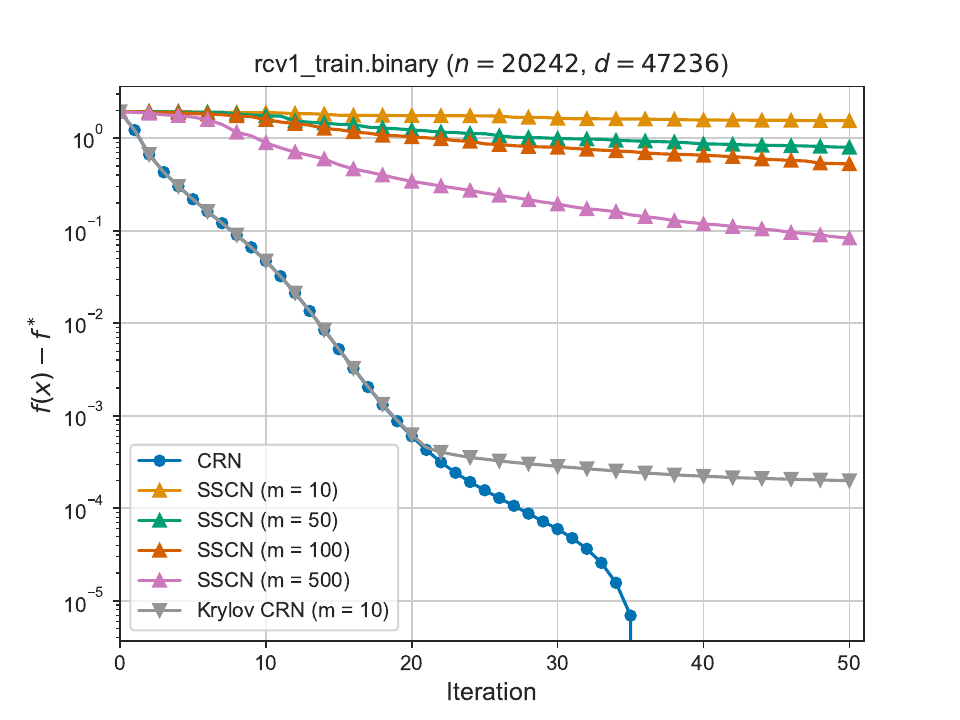}\label{fig:iteration_rcv1_train}}
  \subfloat[]
  {\includegraphics[width=0.32\linewidth]{iteration_news20.binary.pdf}\label{fig:iteration_news20}} \\[-.3em]
    \subfloat[]
    {\includegraphics[width=0.32\linewidth]{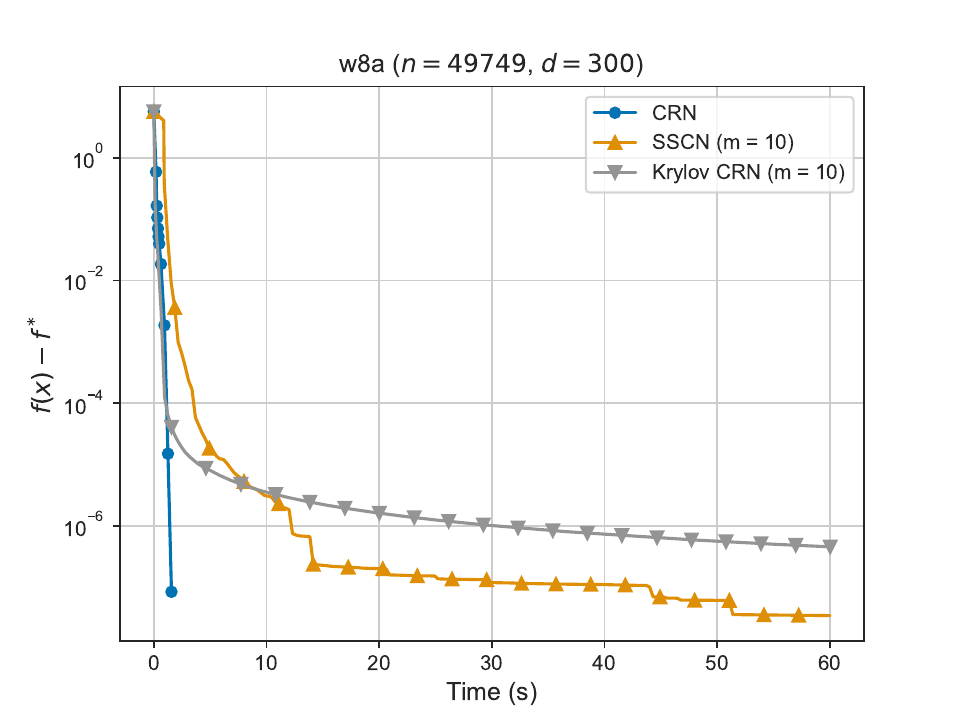}\label{fig:time_w8a}}
    \subfloat[]
    {\includegraphics[width=0.32\linewidth]{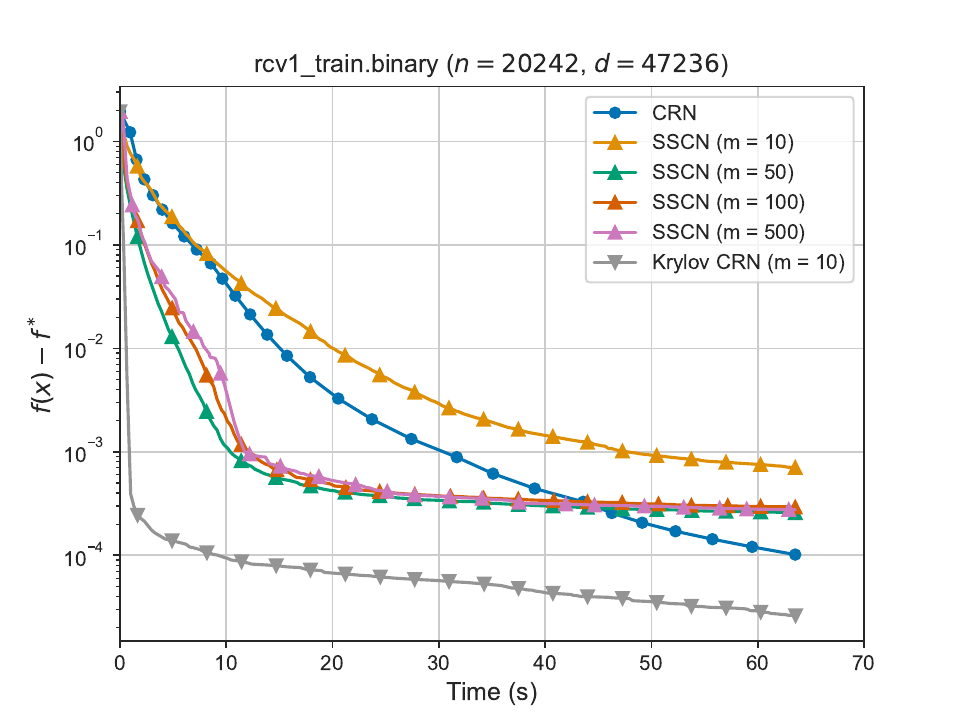}\label{fig:time_rcv1}}
    \subfloat[]
    {\includegraphics[width=0.32\linewidth]{intro_news20.binary.pdf}\label{fig:time_news20}}
    \caption{Numerical comparisons of CRN, SSCN, and our proposed Krylov CRN method for logistic regression on LIBSVM datasets. Here, $m$ is the subspace dimension, $n$ is the number of samples, and $d$ is the number of parameters. {The first row depicts the loss curves in terms of the number of iterations, while the second row depicts these curves in terms of the overall computation time.} We observe that Krylov CRN converges much faster than the others when the dimension $d$ is large.}
    \label{fig:logistic}
\end{figure}

In Figures~\subref{fig:iteration_w8a}-\subref{fig:iteration_news20}, we plot the suboptimality gap of CRN, SSCN, and our proposed Krylov CRN method in terms of the number of iterations. We observe that our method performs similarly to the full-dimensional CRN method, %
even though the cubic subproblem is solved over a subspace of dimension $m=10$. Also, compared with SSCN, our Krylov CRN method is able to make much more progress in each iteration, and the gap between these two methods becomes more significant as the problem's dimension $d$ increases. We should add that the convergence behaviors of these methods in Figures~\subref{fig:iteration_w8a}-\subref{fig:iteration_news20} in terms of iteration complexity are consistent with the theoretical results discussed in this paper. Specifically, we observe that SSCN converges slower as $d$ increases, which is expected given the fact that its convergence rate scales linearly with $d$. Conversely, the convergence path of our proposed method remains almost unchanged as $d$ increases. This consistency aligns with our theoretical findings that the iteration complexity of our method is dimension-free. 

Since the per-iteration costs of these methods are different, in Figures~\subref{fig:time_w8a}-\subref{fig:time_news20} we also compare their performance in terms of the overall computation time.
As shown in Figure~\subref{fig:time_w8a}, the full-dimensional CRN method converges very fast and outperforms all the other methods on a problem with a low-dimensional dataset, such as ``w8a''. However, as the dimension of the problem increases, the per-iteration cost of the CRN method rises rapidly and completes much fewer iterations in the given amount of time, resulting in its poor performance. 
{Moreover, while SSCN typically incurs a smaller per-iteration cost than our method,}  Figures~\subref{fig:time_rcv1} and~\subref{fig:time_news20} demonstrate that our method still overall outperforms SSCN due to its faster convergence rate. 

\section{Conclusion}
In this paper, we proposed the Krylov subspace cubic regularized Newton method, where we perform the cubic regularized Newton update over the Krylov subspace associated with the Hessian and the gradient at the current iterate. We proved that our proposed method converges at a rate of $\bigO\left(\frac{1}{mk}+\frac{1}{k^2}\right)$ in the convex setting, where $m$ is the subspace dimension and $k$ is the iteration counter. Furthermore, we characterized how the spectral structure of the Hessian matrices impacts its convergence, showing that the convergence rate can improve when the Hessian admits favorable structures. Finally, our experiments demonstrated the superior performance of our proposed method.

\section*{Acknowledgements}
{This research of R. Jiang and A. Mokhtari is supported in part by NSF Grants 2007668, 2019844, and  2112471,  ARO  Grant  W911NF2110226,  the  Machine  Learning  Lab  (MLL)  at  UT  Austin, and the Wireless Networking and Communications Group (WNCG) Industrial Affiliates Program.}
\printbibliography

\newpage
\appendix
\section*{Appendix}
\section{Proof of Proposition~\ref{prop:maximal_krylov}}\label{appen:maximal_krylov}

To prove item (i), we note that Condition (A) is equivalent to $(\mP_k \vg_k -  \vg_k)^\top \vs \leq 0$ for any $\vs \in \mathbb{R}^d$. In particular, by choosing $\vs = \mP_k \vg_k - \vg_k$, we can deduce that $\vg_k = \mP_k \vg_k$. Since $\mP_k$ is an orthogonal projection matrix associated with the subspace $\mathcal{V}_k$, this holds if and only if $\vg_k \in \mathcal{V}_k$. 

To prove item (ii), we consider the following decomposition $\vs = \hat{\vs} + \vs^\perp$, where $\hat{\vs} \in \mathcal{V}_k$ and $\vs^\perp \in \mathcal{V}_k^{\perp}$. Here, $\mathcal{V}_k^{\perp}$ denotes the orthogonal complement of $\mathcal{V}_k$. Since $\hat{\vs} = \mP_k \vs$, we have 
\begin{align*}
  \vs^\top \mH_k \vs - (\mP_k \vs)^\top \mH_k (\mP_k \vs) 
 = \vs^\top \mH_k \vs - \hat{\vs}^\top \mH_k \hat{\vs} &= (\hat{\vs} + \vs^{\perp})^\top \mH_k (\hat{\vs} + \vs^{\perp}) - \hat{\vs}^\top \mH_k \hat{\vs} \\
  &= 2 (\vs^{\perp})^\top \mH_k \hat{\vs} + (\vs^{\perp})^\top  \mH_k \vs^{\perp}.
\end{align*}
Thus, Condition (B) is equivalent to 
\begin{equation}\label{eq:semidefinite_test}
    2 (\vs^{\perp})^\top \mH_k \hat{\vs} + (\vs^{\perp})^\top  \mH_k \vs^{\perp} \geq 0, \quad \forall  \hat{\vs} \in \mathcal{V}_k \; \text{and} \; \vs^\perp \in \mathcal{V}_k^{\perp}.
\end{equation}
Moreover, we claim that \eqref{eq:semidefinite_test} further implies
\begin{equation}\label{eq:zero_inner_product}
    (\vs^{\perp})^\top \mH_k \hat{\vs} = 0, \quad \forall \hat{\vs} \in \mathcal{V}_k \;\text{and} \;\vs^\perp \in \mathcal{V}_k^{\perp}.
\end{equation}
Otherwise, suppose there exist $\hat{\vs}_0 \in \mathcal{V}_k$ and $\vs^\perp_0 \in \mathcal{V}_k^{\perp}$ with $(\vs_0^{\perp})^\top \mH_k \hat{\vs}_0 \neq 0$. Without loss of generality, assume $(\vs_0^{\perp})^\top \mH_k \hat{\vs}_0 > 0$. 
Since $\calV_k$ is a linear subspace, we have $\lambda \hat{\vs}_0 \in \calV_k$ for any $\lambda \in \mathbb{R}$. However, when $\lambda \rightarrow -\infty$, we have 
\begin{equation*}
     2 (\vs_0^{\perp})^\top \mH_k (\lambda\hat{\vs}_0) + (\vs_0^{\perp})^\top  \mH_k \vs_0^{\perp} = 2\lambda (\vs_0^{\perp})^\top \mH_k \hat{\vs}_0 + (\vs_0^{\perp})^\top  \mH_k \vs_0^{\perp} \rightarrow -\infty,
\end{equation*}
which leads to a contradiction with \eqref{eq:semidefinite_test}. This proves that the statement in \eqref{eq:zero_inner_product}. 
Finally, since ${\vs}^\perp$ can be any vector in $\calV_k^{\perp}$, \eqref{eq:zero_inner_product} holds if and only if $\mH_k \hat{\vs} \in \mathcal{V}_k$. Since $\hat{\vs}$ is an arbitrary vector in $\calV_k$, by definition, this shows that $\calV_k$ is an invariant subspace.

\section{The Lanczos Method}
First, we provide the proof of Proposition~\ref{prop:Lanczos} in Section~\ref{subsec:lanczos}. Then in Section~\ref{subsec:technical}, we present additional technical lemmas that we will use in the convergence proof. 

\subsection{Proof of Proposition~\ref{prop:Lanczos}}\label{subsec:lanczos}
Item (i) is a standard property of the Lanczos method; for instance, see \citet[Proposition 6.5]{saad2011numerical}.  Item (ii) follows from Lines~4, 6, and 8 in the update rule of Algorithm~\ref{alg:Lanczos}. To prove item (iii), we first rewrite the recursive formula in item (ii) in a matrix form, as shown in the following lemma. 

\begin{lemma}\label{lem:recursive_matrix_form}
We have $\mA \mV^{(j)} = \mV^{(j)} \tilde{\mA}^{(j)} +\beta_{j+1} \vv_{j+1} (\ve_{j}^{(j)})^\top$, where $\ve_j^{(j)}$ is the $j$-th standard basis vector in $\mathbb{R}^j$.  
\end{lemma}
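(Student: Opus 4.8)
The plan is to establish the identity one column at a time, invoking the three-term recurrence from item (ii) of Proposition~\ref{prop:Lanczos}. The $l$-th column of $\mA \mV^{(j)}$ is exactly $\mA \vv_l$, so by item (ii) it equals $\beta_{l+1} \vv_{l+1} + \alpha_l \vv_l + \beta_l \vv_{l-1}$ for each $l \in \{1,\dots,j\}$, where the conventions $\vv_0 = \vzero$ and $\beta_1 = 0$ (as initialized in Algorithm~\ref{alg:Lanczos}) take care of the first column.

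Next I would compute the $l$-th column of $\mV^{(j)} \tilde{\mA}^{(j)}$. Since $\tilde{\mA}^{(j)}$ is tridiagonal, its $l$-th column has nonzero entries $\beta_l$, $\alpha_l$, $\beta_{l+1}$ in rows $l-1$, $l$, $l+1$, respectively, so that $\mV^{(j)} \tilde{\mA}^{(j)} \ve_l^{(j)} = \beta_l \vv_{l-1} + \alpha_l \vv_l + \beta_{l+1} \vv_{l+1}$. For every $l \leq j-1$, all three vectors $\vv_{l-1}, \vv_l, \vv_{l+1}$ are genuine columns of $\mV^{(j)}$, and this expression agrees term by term with $\mA \vv_l$ from the previous paragraph. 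Hence the first $j-1$ columns of $\mA \mV^{(j)}$ and $\mV^{(j)} \tilde{\mA}^{(j)}$ coincide.

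The only place the two sides differ is the last column $l = j$: here the tridiagonal matrix $\tilde{\mA}^{(j)}$ has no $(j+1)$-th row, so $\mV^{(j)} \tilde{\mA}^{(j)} \ve_j^{(j)} = \beta_j \vv_{j-1} + \alpha_j \vv_j$, which is missing precisely the term $\beta_{j+1} \vv_{j+1}$ present in $\mA \vv_j$. Reinserting this single vector in only the last column is exactly what the rank-one correction $\beta_{j+1} \vv_{j+1} (\ve_j^{(j)})^\top$ accomplishes, since $(\ve_j^{(j)})^\top$ selects the $j$-th standard coordinate and annihilates all others. Collecting the columns then yields the claimed matrix identity. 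I expect no genuine obstacle here; the only point requiring care is the boundary bookkeeping, namely confirming that $\vv_0$ never enters $\mV^{(j)}$ (harmless because $\beta_1 = 0$) and that the $\vv_{j+1}$ contribution at the final column is captured by the rank-one term rather than by $\mV^{(j)} \tilde{\mA}^{(j)}$, which cannot reference the out-of-range vector $\vv_{j+1}$.
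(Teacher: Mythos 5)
Your proof is correct and follows essentially the same route as the paper's: compare the two sides column by column and invoke the three-term recurrence from item (ii) of Proposition~\ref{prop:Lanczos}. You are in fact slightly more careful than the paper about the boundary columns ($l=1$ via $\beta_1=0$, and $l=j$ where the rank-one term supplies the missing $\beta_{j+1}\vv_{j+1}$), which the paper's one-line verification glosses over.
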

\begin{proof}
  It suffices to verify that the two matrices in the identity have the exactly same columns. For any $l \in \{1,\dots,j\}$, the $l$-th column of $\mA \mV^{(j)}$ is given by $\mA \vv_l$. On the other hand, by the definition of $\tilde{\mA}^{(j)}$, we can observe that the $l$-th column of the the right-hand side matrix  is given by $\beta_{l} \vv_{l-1} + \alpha_{l} \vv_l + \beta_{l+1} \vv_{l+1}$. Hence, the equality holds due to Item (ii) in Proposition~\ref{prop:Lanczos}.    
\end{proof}
Now we are ready to prove item (iii) in Proposition~\ref{prop:Lanczos}. 
Since the vectors $\{\vv_j\}_{j=1}^m$ are orthonormal, it holds that $(\mV^{(j)})^\top \mV^{(j)} = \mI$ and $\left(\mV^{(j)}\right)^\top \vv_{j+1} = 0$. Thus, by Lemma~\ref{lem:recursive_matrix_form}, we have $ (\mV^{(j)})^\top \mA \mV^{(j)} = (\mV^{(j)})^\top \mV^{(j)} \tilde{\mA}^{(j)} +\beta_{j+1} (\mV^{(j)})^\top \vv_{j+1} (\ve_{j}^{(j)})^\top  = \tilde{\mA}^{(j)}$. Moreover, note that we set $\vv_1 = \frac{\vb}{\|\vb\|}$ in Algorithm~\ref{alg:Lanczos}. Since $(\mV^{(j)})^\top \vv_1 = \ve_1^{(j)}$, we further obtain that $(\mV^{(j)})^\top \vb = \|\vb\|\ve_1^{(j)}$. This completes the proof. 

\subsection{Technical Lemmas}\label{subsec:technical}
In this section, we provide some additional useful results regarding the outputs of the Lanczos method in Algorithm~\ref{alg:Lanczos}. For the convenience of the reader, we briefly recap our notations.  
Given the input matrix $\mA$ and the input vector $\vb$, we let $\{\vv_j\}_{j=1}^m$, $\{\alpha_j\}_{j=1}^m$, $\{\beta_j\}_{j=2}^{m+1}$ be the sequences generated during the execution of Algorithm~\ref{alg:Lanczos}. Also recall that $\mV^{(j)} = [\vv_1, \vv_2, \dots, \vv_j]\in \mathbb{R}^{d\times j}$ and $\mP^{(j)} = \mV^{(j)} (\mV^{(j)})^\top$. 
\begin{lemma}\label{lem:beta_product}
The following statement holds true. 
\begin{enumerate}[(i),leftmargin=1cm]
  \item $(\mI-\mP^{(j)}) \mA \mP^{(j)} = \beta_{j+1} \vv_{j+1}\vv_j^\top$ for any $j \geq 1$. 
  \item  For any $j \geq 1$, we have
  \begin{equation}\label{eq:beta_and_sigma}
    \beta_{j+1} = \sigma^{(j)}(\mA,\vb) := \max_{\vw \in \calK^{(j)}, \|\vw\|=1} {\mathrm{dist}(\mA \vw, \calK^{(j)})},
  \end{equation}
  where $\calK^{(j)}$ denotes $ \calK_j(\mA,\vb)$
and $\mathrm{dist}(\vu, \calV) := \min_{\vv \in \calV} \|\vu-\vv\|$ is the distance between a vector $\vu$ and a subspace $\calV$.
  \item  We have  
  \begin{equation}\label{eq:product_betas}
    \prod_{j=2}^{m+1} \beta_j = \min_{\vu \in \mathcal{K}_m(\mA,\vb)} \|\mA^{m} \vv_1 -\vu\|. 
\end{equation}
\end{enumerate}

\end{lemma}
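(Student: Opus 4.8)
The plan is to prove the three items in order, since each one feeds into the next. Throughout I will rely on Lemma~\ref{lem:recursive_matrix_form}, the orthonormality of the Lanczos vectors, and the fact that $\mP^{(j)}$ is the orthogonal projector onto $\calK_j(\mA,\vb)=\mathrm{span}\{\vv_1,\dots,\vv_j\}$ (item (i) of Proposition~\ref{prop:Lanczos}), so that $\mP^{(j)}\vv_{j+1}=0$ and $\mP^{(j)}\mV^{(j)}=\mV^{(j)}$.

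For item (i), I start from Lemma~\ref{lem:recursive_matrix_form}, which reads $\mA\mV^{(j)}=\mV^{(j)}\tilde{\mA}^{(j)}+\beta_{j+1}\vv_{j+1}(\ve_j^{(j)})^\top$. Left-multiplying by $\mI-\mP^{(j)}$ annihilates the first term, because $(\mI-\mP^{(j)})\mV^{(j)}=\mV^{(j)}-\mV^{(j)}=\0$, while $(\mI-\mP^{(j)})\vv_{j+1}=\vv_{j+1}$ since $\vv_{j+1}\perp\calK_j$. Thus $(\mI-\mP^{(j)})\mA\mV^{(j)}=\beta_{j+1}\vv_{j+1}(\ve_j^{(j)})^\top$. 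Right-multiplying by $(\mV^{(j)})^\top$ turns the left side into $(\mI-\mP^{(j)})\mA\mP^{(j)}$ and produces $(\ve_j^{(j)})^\top(\mV^{(j)})^\top=\vv_j^\top$ on the right, giving the claimed identity $(\mI-\mP^{(j)})\mA\mP^{(j)}=\beta_{j+1}\vv_{j+1}\vv_j^\top$. This step is purely mechanical.

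For item (ii), I note that for any unit $\vw\in\calK^{(j)}$ the best approximation in the subspace is its orthogonal projection, so $\mathrm{dist}(\mA\vw,\calK^{(j)})=\|(\mI-\mP^{(j)})\mA\vw\|$; since $\mP^{(j)}\vw=\vw$, I may insert a projector and write this as $\|(\mI-\mP^{(j)})\mA\mP^{(j)}\vw\|$. Applying item (i) gives $\|\beta_{j+1}(\vv_j^\top\vw)\vv_{j+1}\|=\beta_{j+1}|\vv_j^\top\vw|$, using $\|\vv_{j+1}\|=1$. Maximizing over unit $\vw\in\calK^{(j)}$, Cauchy--Schwarz yields $|\vv_j^\top\vw|\le 1$ with equality at $\vw=\vv_j\in\calK^{(j)}$, so $\sigma^{(j)}(\mA,\vb)=\beta_{j+1}$ (recalling $\beta_{j+1}\ge 0$).

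Item (iii) is the main obstacle and the only nonmechanical part. The key claim, which I will establish by induction on $j$ using the three-term recurrence $\beta_{j+1}\vv_{j+1}=\mA\vv_j-\alpha_j\vv_j-\beta_j\vv_{j-1}$ (item (ii) of Proposition~\ref{prop:Lanczos}), is that $\vv_{j+1}=q_j(\mA)\vv_1$ for a polynomial $q_j$ of degree exactly $j$ whose leading coefficient equals $1/\prod_{i=2}^{j+1}\beta_i$; the inductive step multiplies the degree-$(j-1)$ polynomial for $\vv_j$ by $\mA$ and divides by $\beta_{j+1}$, which raises the degree by one and scales the leading coefficient correctly. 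Granting this for $j=m$, I rearrange to obtain $\mA^m\vv_1=\bigl(\prod_{i=2}^{m+1}\beta_i\bigr)\vv_{m+1}+\vr$, where $\vr$ collects the lower-order contributions $\mA^i\vv_1$ with $i<m$, all of which lie in $\calK_m(\mA,\vb)$. Because $\vv_{m+1}\perp\calK_m$ and $\|\vv_{m+1}\|=1$, the orthogonal-complement component of $\mA^m\vv_1$ is exactly $(\prod_{i=2}^{m+1}\beta_i)\vv_{m+1}$, so the distance $\min_{\vu\in\calK_m}\|\mA^m\vv_1-\vu\|=\|(\mI-\mP^{(m)})\mA^m\vv_1\|$ equals $\prod_{i=2}^{m+1}\beta_i$ (again using $\beta_i\ge 0$). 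I will note the implicit nondegeneracy assumption that the Lanczos process does not break down before step $m$, so that all $\beta_i$ and $\vv_{m+1}$ are well-defined; in the breakdown case $\calK_m$ is $\mA$-invariant and both sides of the identity vanish.
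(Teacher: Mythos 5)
Your proofs of items (i) and (ii) follow the paper's own route almost verbatim: both start from Lemma~\ref{lem:recursive_matrix_form}, kill the tridiagonal term with $(\mI-\mP^{(j)})\mV^{(j)}=\0$, and then read off item (ii) by inserting $\mP^{(j)}\vw=\vw$ and maximizing $\beta_{j+1}|\vv_j^\top\vw|$ at $\vw=\pm\vv_j$. Item (iii) is where you genuinely diverge, and your argument is correct. The paper stays inside the variational characterization of item (ii): it rewrites $\beta_{j+1}=\max_{\vw\in\calK_j}\mathrm{dist}(\mA\vw,\calK_j)/\|\vw\|$, decomposes $\vw=c_0\mA^{j-1}\vv_1-\vu$ with $\vu\in\calK_{j-1}$, uses $\mA\vu\in\calK_j$ to make the numerator independent of $\vu$, and thereby obtains the ratio identity
\begin{equation*}
\beta_{j+1}=\frac{\min_{\vu\in\calK_j}\|\mA^{j}\vv_1-\vu\|}{\min_{\vu\in\calK_{j-1}}\|\mA^{j-1}\vv_1-\vu\|},
\end{equation*}
which telescopes to \eqref{eq:product_betas}. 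You instead prove by induction on the three-term recurrence that $\vv_{m+1}=q_m(\mA)\vv_1$ for a degree-$m$ polynomial with leading coefficient $1/\prod_{i=2}^{m+1}\beta_i$, so that $\mA^m\vv_1-(\prod_{i=2}^{m+1}\beta_i)\vv_{m+1}\in\calK_m$ and the orthogonal-complement component of $\mA^m\vv_1$ is exactly $(\prod_{i=2}^{m+1}\beta_i)\vv_{m+1}$; the product identity then drops out in one line. Your route has two advantages: it exposes the structural reason the product of the $\beta_j$'s equals the distance (it is literally the coefficient of $\vv_{m+1}$ in the monic expansion of $\mA^m\vv_1$), and it dovetails directly with Lemma~\ref{lem:matrix_polynomial}, which the paper has to re-derive from \eqref{eq:product_betas} afterwards. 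The paper's route avoids the polynomial induction and gets the per-step ratio formula as a byproduct. You are also right to flag the breakdown case $\beta_{j+1}=0$ for some $j\le m$ (where $\vv_{j+1}$ is undefined and both sides of \eqref{eq:product_betas} vanish because $\calK_j$ is $\mA$-invariant); the paper glosses over this, so your remark is a small improvement rather than a gap.
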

{We note that some results in Lemma~\ref{lem:beta_product} are likely known in the literature (see, e.g., \citet{parlett1998symmetric}), but for completeness we provide their proofs below.}
\begin{proof}
 To prove item (i), by using Lemma~\ref{lem:recursive_matrix_form} and the fact that $\mP^{(j)} = \mV^{(j)} (\mV^{(j)})^\top$, we obtain 
  \begin{equation*}
    \mA \mP^{(j)} \!=\!\mA \mV^{(j)} (\mV^{(j)})^\top \!\!= \!\mV^{(j)} \tilde{\mA}^{(j)}(\mV^{(j)})^\top +\beta_{j+1} \vv_{j+1} (\ve_{j}^{(j)})^\top (\mV^{(j)})^\top \!=\! \mV^{(j)} \tilde{\mA}^{(j)}(\mV^{(j)})^\top +\beta_{j+1} \vv_{j+1} \vv_j^\top.\! 
  \end{equation*}
  Moreover, since the vectors $\{\vv_j\}_{j=1}^m$ are orthonormal, it holds that $(\mI-\mP^{(j)}) \mV^{(j)} = \mV^{(j)} - \mV^{(j)}(\mV^{(j)})^\top \mV^{(j)} = 0$ and $(\mI-\mP^{(j)})\vv_{j+1} = \vv_{j+1} - (\mV^{(j)})^\top \mV^{(j)} \vv_{j+1} = \vv_{j+1}$. Hence, we conclude that $(\mI-\mP^{(j)})\mA \mP^{(j)} = \beta_{j+1} \vv_{j+1} \vv_j^\top$. 

  To prove item (ii), we will use the result we just proved in item (i). Since $\mP^{(j)}$ is the orthogonal projection matrix associated to $\calK_j(\mA,\vb)$, we have $\mathrm{dist}(\mA \vw, \calK^{(j)}) = \|(\mI - \mP^{(j)}) \mA \vw\|$ by the distance-minimizing property of orthogonal projection. Moreover, note that for any $\vw \in \calK_j(\mA,\vb)$, we have $\mP^{(j)} \vw = \vw$. Thus, we can write 
  \begin{equation*}
    \mathrm{dist}(\mA \vw, \calK^{(j)}) = \|(\mI - \mP^{(j)}) \mA \vw\| = \|(\mI - \mP^{(j)}) \mA \mP^{(j)}\vw\| = \beta_{j+1}\|\vv_{j+1} (\vv_j^\top \vw)\| = \beta_{j+1} |\vv_j^\top \vw|.
  \end{equation*} 
  Thus, the maximum in \eqref{eq:beta_and_sigma} is achieved when $\vw = \pm \vv_j$, which proves the desired result. 

  To prove item (iii), we will first use \eqref{eq:beta_and_sigma} to derive the following alternative expression for $\beta_{j+1}$:   
  \begin{equation*}
    \beta_{2} = {\min_{\vu \in \mathcal{K}_1(\mA,\vb)} \frac{}{} \|\mA \vv_1 -\vu\|} \quad \text{and}\quad \beta_{j+1} = \frac{\min_{\vu \in \mathcal{K}_j(\mA,\vb)} \|\mA^{j} \vv_1 -\vu\|}{\min_{\vu \in \mathcal{K}_{j-1}(\mA,\vb)} \|\mA^{j-1} \vv_1 -\vu\|}\; \text{for }j\geq 2. 
  \end{equation*}
  If this is the case, then \eqref{eq:product_betas} directly follows from telescoping. 
  To prove this, we first note that $\calK_1(\mA,\vb) = \mathrm{span}(\vv_1)$ and thus 
  \begin{equation*}
    \beta_2 = \max_{\vw \in \calK_1(\mA,\vb), \|\vw\|=1} {\mathrm{dist}(\mA \vw, \calK_1(\mA,\vb))} = \mathrm{dist}(\mA \vv_1, \calK_1(\mA,\vb)) = \min_{\vu \in \mathcal{K}_1(\mA,\vb)} \|\mA \vv_1 -\vu\|.
  \end{equation*} 
  Moreover, since $\calK_j(\mA,\vb)$ is a linear subspace, we have $\mathrm{dist}(\lambda \mA \vw, \calK_j(\mA,\vb)) = \lambda \mathrm{dist}(\mA \vw, \calK_j(\mA,\vb))$ for any $\lambda > 0$. Thus, we can equivalently write \eqref{eq:beta_and_sigma} as 
  \begin{equation}\label{eq:beta_unnormalized}
    \beta_{j+1} =\max_{\vw \in \calK_j(\mA,\vb)}\frac{\mathrm{dist}(\mA \vw, \calK_j(\mA,\vb))}{\|\vw\|}.
  \end{equation}
  Note that for any $\vw \in \calK_j(\mA,\vb)$, it can be decomposed as $\vw = c_0 \mA^{j-1}\vv_1 - \vu$, where $c_0 \in \mathbb{R}$ is some {scalar} and $\vu \in \calK_{j-1}(\mA,\vb)$. If $c_0 = 0$, then $\vw \in \calK_{j-1}(\mA,\vb)$ and further we have $\mA\vw \in \calK_{j}(\mA,\vb)$, which implies that $\mathrm{dist}(\mA \vw, \calK^{(j)}) = 0$. Hence, for the maximization problem in \eqref{eq:beta_unnormalized},  without loss of generality, we can assume that $\vw = \mA^{j-1}\vv_1 - \vu$ with $\vu \in \calK_{j-1}(\mA,\vb)$ and rewrite \eqref{eq:beta_unnormalized} as  
  \begin{equation}\label{eq:beta_ratio}
    \beta_{j+1} = \max_{\vu \in \calK_{j-1}(\mA,\vb)}\frac{\mathrm{dist}(\mA^{j}\vv_1 - \mA\vu, \calK_j(\mA,\vb))}{\|\mA^{j-1}\vv_1 - \vu\|} = \max_{\vu \in \calK_{j-1}(\mA,\vb)}\frac{\mathrm{dist}(\mA^{j}\vv_1, \calK_j(\mA,\vb))}{\|\mA^{j-1}\vv_1 - \vu\|},
  \end{equation}
  where we used the fact that $\mA \vu \in \calK_j(\mA,\vb)$ in the second equality. Moreover, the nominator in \eqref{eq:beta_ratio} can be written as $\mathrm{dist}(\mA^{j}\vv_1, \calK_j(\mA,\vb)) = \min_{\vu \in \mathcal{K}_j(\mA,\vb)} \|\mA^{j} \vv_1 -\vu\|$, while the denominator is equal to $\min_{\vu \in \mathcal{K}_{j-1}(\mA,\vb)} \|\mA^{j-1} \vv_1 -\vu\|$. This completes the proof.   
\end{proof}

As a corollary of item (ii) in Lemma~\ref{lem:beta_product}, the quantity $\rho^{(m)}(\mA,\vb)$ defined in \eqref{eq:def_rho} can also be written as $\rho^{(m)}(\mA,\vb) = \left(\prod_{j=1}^m \beta_{j+1}\right)^{\frac{1}{m}}$. 
This observation will be used in the following lemma.  
\begin{lemma}\label{lem:bessel}
Assume that $\mA \succeq 0$. 
Then, for any $\vs\in \mathbb{R}^d$, we have 
  \begin{equation}\label{eq:bound_on_min}
      \min_{j \in \{1,2,\dots, m\}} \; \left\{(\mP^{(j)}\vs)^\top \mA \mP^{(j)}\vs -\vs^\top \mA \vs \right\} \leq  \frac{2\rho^{(m)}(\mA,\vb)}{m}\|\vs\|^2.
  \end{equation}
\end{lemma}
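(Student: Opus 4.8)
The plan is to reduce the quadratic-form difference, for each fixed $j$, to a single cross term whose value is made explicit by the off-diagonal structure of $\mA$ relative to the projection $\mP^{(j)}$, and then to control the minimum over $j$ by a geometric mean so that the product of the $\beta_{j+1}$'s reassembles $\rho^{(m)}(\mA,\vb)$. Concretely, I would first fix $j$ and write $\vs = \mP^{(j)}\vs + (\mI - \mP^{(j)})\vs$. Using symmetry of $\mA$ and of the projection $\mP^{(j)}$, the difference expands as
\begin{equation*}
(\mP^{(j)}\vs)^\top \mA \mP^{(j)}\vs - \vs^\top\mA\vs = -2\,\vs^\top \mP^{(j)}\mA(\mI - \mP^{(j)})\vs - \bigl((\mI - \mP^{(j)})\vs\bigr)^\top \mA (\mI - \mP^{(j)})\vs.
\end{equation*}
Since $\mA\succeq 0$, the final quadratic term is nonnegative and can be dropped, leaving the upper bound $-2\,\vs^\top \mP^{(j)}\mA(\mI - \mP^{(j)})\vs$. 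Transposing item (i) of Lemma~\ref{lem:beta_product} yields $\mP^{(j)}\mA(\mI-\mP^{(j)}) = \beta_{j+1}\vv_j\vv_{j+1}^\top$, so the cross term equals $\beta_{j+1}(\vv_j^\top\vs)(\vv_{j+1}^\top\vs)$. Abbreviating $c_i := \vv_i^\top \vs$, this gives the pointwise bound $(\mP^{(j)}\vs)^\top \mA \mP^{(j)}\vs - \vs^\top\mA\vs \leq 2\beta_{j+1}|c_j||c_{j+1}|$ for every $j$.

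Next I would take the minimum over $j\in\{1,\dots,m\}$ and bound it by the geometric mean:
\begin{equation*}
\min_{1\le j\le m} 2\beta_{j+1}|c_j||c_{j+1}| \leq 2\Bigl(\prod_{j=1}^m \beta_{j+1}\Bigr)^{\!1/m}\Bigl(\prod_{j=1}^m |c_j||c_{j+1}|\Bigr)^{\!1/m} = 2\rho^{(m)}(\mA,\vb)\Bigl(\prod_{j=1}^m |c_j||c_{j+1}|\Bigr)^{\!1/m},
\end{equation*}
where the equality uses the corollary $\rho^{(m)}(\mA,\vb) = (\prod_{j=1}^m \beta_{j+1})^{1/m}$ following from item (ii) of Lemma~\ref{lem:beta_product}. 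A second application of the AM--GM inequality gives $(\prod_{j=1}^m|c_j||c_{j+1}|)^{1/m} \leq \frac{1}{m}\sum_{j=1}^m |c_j||c_{j+1}|$; then $2|c_j||c_{j+1}|\leq c_j^2 + c_{j+1}^2$ together with the orthonormality of $\{\vv_i\}$ and Bessel's inequality $\sum_{i=1}^{m+1} c_i^2 \leq \|\vs\|^2$ yields $\sum_{j=1}^m |c_j||c_{j+1}| \leq \|\vs\|^2$. Chaining these inequalities produces the claimed bound $\frac{2\rho^{(m)}(\mA,\vb)}{m}\|\vs\|^2$.

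The expansions and the two AM--GM steps are mechanical; the part that needs the right idea is recognizing that the minimum over $j$ must be controlled by the \emph{geometric} mean rather than by any single term, since it is exactly the geometric mean of the $\beta_{j+1}$'s that reconstructs $\rho^{(m)}(\mA,\vb)$. After that, the only remaining point is to verify that the coordinate product $\prod_{j=1}^m |c_j||c_{j+1}|$ can be matched against $\|\vs\|^2/m$, which is precisely where the factor $1/m$ and Bessel's inequality enter. (The degenerate case in which some $\beta_{j+1}=0$ for $j\le m$ is harmless, since then $\rho^{(m)}(\mA,\vb)=0$ and the corresponding cross term vanishes.)
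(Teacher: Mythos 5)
Your proof is correct and follows essentially the same route as the paper's: the same orthogonal decomposition of $\vs$, dropping the nonnegative term via $\mA\succeq 0$, identifying the cross term as $\beta_{j+1}(\vv_j^\top\vs)(\vv_{j+1}^\top\vs)$ through item (i) of Lemma~\ref{lem:beta_product}, and then the identical chain of geometric mean, AM--GM, and Bessel's inequality. The only cosmetic difference is that you transpose the projection identity before applying it; the remark about vanishing $\beta_{j+1}$ is harmless and not needed.
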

\begin{proof}
  To begin with, fix $j\in \{1,2,\dots,m\}$. By using $\vs = \mP^{(j)}\vs + (\mI-\mP^{(j)})\vs$, we can write 
  \begin{equation*}
      \vs^\top \mA \vs =  (\mP^{(j)}\vs)^\top \mA \mP^{(j)}\vs + 2 \left((\mI-\mP^{(j)})\vs\right)^{\top} \mA \mP^{(j)}\vs + \left((\mI-\mP^{(j)})\vs\right)^\top \mA (\mI-\mP^{(j)})\vs . 
  \end{equation*}
  Since $\mA \succeq 0$, we have $\left((\mI-\mP^{(j)})\vs\right)^\top \mA (\mI-\mP^{(j)})\vs  \geq 0$, which implies that 
  \begin{equation}\label{eq:crossover_bound}
       (\mP^{(j)}\vs)^\top \mA \mP^{(j)}\vs - \vs^\top \mA \vs  \leq  -  2 \left((\mI-\mP^{(j)})\vs\right)^{\top} \mA \mP^{(j)}\vs.  
  \end{equation}
  To bound the right-hand side of \eqref{eq:crossover_bound}, we first note that 
  $\left((\mI-\mP^{(j)})\vs\right)^{\top} \mA \mP^{(j)}\vs = \vs^\top (\mI-\mP^{(j)})\mA \mP^{(j)}\vs = \beta_{j+1} (\vv_{j+1}^\top \vs) (\vv_j^\top \vs)$, where we used  item (i) in Lemma~\ref{lem:beta_product} to obtain the last equality. Hence, \eqref{eq:crossover_bound} becomes 
  \begin{equation*}
    (\mP^{(j)}\vs)^\top \mA \mP^{(j)}\vs - \vs^\top \mA \vs  \leq -2 \beta_{j+1}(\vv_{j+1}^\top \vs) (\vv_j^\top \vs) \leq 2 \beta_{j+1} |\vv_j^\top \vs| |\vv_{j+1}^\top \vs|. 
  \end{equation*}
  Since the inequality above holds for any $j$, by taking the minimum over $j=1,2,\dots,m$, it further leads to 
  \begin{equation}\label{eq:min_both_sides}
    \min_{j \in \{1,2,\dots, m\}} \; \left\{(\mP^{(j)}\vs)^\top \mA \mP^{(j)}\vs -\vs^\top \mA \vs \right\} \leq  2 \min_{j \in \{1,2,\dots, m\}} \beta_{j+1} |\vv_j^\top \vs| |\vv_{j+1}^\top \vs|. 
  \end{equation}
  Since the minimum is upper bounded by the geometric mean, we also have 
  \begin{equation}\label{eq:min_geometric}
      \min_{j \in \{1,2,\dots, m\}} \beta_{j+1} |\vv_j^\top \vs| |\vv_{j+1}^\top \vs| \leq \Biggl(\prod_{j=1}^m \beta_{j+1} |\vv_j^\top \vs| |\vv_{j+1}^\top \vs|\Biggr)^{\frac{1}{m}} \leq \Biggl(\prod_{j=1}^m \beta_{j+1}\Biggr)^{\frac{1}{m}} \Biggl(\prod_{j=1}^m |\vv_j^\top \vs| |\vv_{j+1}^\top \vs|\Biggr)^{\frac{1}{m}}. 
  \end{equation}
  Recall that we have $\rho^{(m)}(\mA,\vb) = \left(\prod_{j=1}^m \beta_{j+1}\right)^{\frac{1}{m}}$. 
  Furthermore, note that $\{\vv_j\}_{j=1}^m$ are orthonormal and by Bessel's inequality we have 
  \begin{equation*}
      \sum_{j=1}^{m+1} |\vv_j^\top \vs|^2 \leq  \|\vs\|^2. 
  \end{equation*}
  Using the inequality of arithmetic and geometric means, we obtain 
  \begin{equation}\label{eq:AM_GM}
      \Biggl(\prod_{j=1}^m |\vv_j^\top \vs | |\vv_{j+1}^\top \vs |\Biggr)^{\frac{1}{m}} \!\!\leq \frac{1}{m}\sum_{i=1}^m |\vv_j^\top \vs | |\vv_{j+1}^\top \vs | \leq \frac{1}{m}\sum_{i=1}^m \frac{1}{2} (|\vv_j^\top \vs | ^2 + |\vv_{j+1}^\top \vs|^2 ) 
      \leq \frac{1}{m} \sum_{j=1}^{m+1} | \vv_j^\top \vs |^2 \leq \frac{1}{m} \|\vs\|^2. 
  \end{equation}
  Finally \eqref{eq:bound_on_min} follows from \eqref{eq:min_both_sides}, \eqref{eq:min_geometric} and \eqref{eq:AM_GM}.  
  This completes the proof. 
\end{proof}
\section{Proofs of The Main Theorems}\label{appen:main}

To begin with, recall that the columns of $\mV_k$ form an orthogonal basis for the Krylov subspace $\calK_m(\mH_k,\vg_k)$. Hence, for any $\vs \in \calK_m(\mH_k,\vg_k)$, there exists $\vz \in \mathbb{R}^m$ such that $\vs = \mV_k \vz$ and $\|\vz\| = \|\vs\|$.  Thus, by substituting $\mV_k \vz$ for $\vs$ in \eqref{eq:general_subspace}, the update rule of Algorithm~\ref{alg:subspace_cubic} can be equivalently written as 
\begin{align}
  \vs_{k} & := \argmin_{\vs \in \calK_m(\mH_k,\vg_k)} \Bigl\{ \vg_k^\top \vs + \frac{1}{2}\vs^\top \mH_k \vs + \frac{M}{6}\|\vs\|^3 \Bigr\} \label{eq:Krylov_cubic_newton} \\ 
  \vx_{k+1} & = \vx_k + \vs_k. \label{eq:Krylov_cubic_newton_2}  
\end{align} 
This alternative form of Algorithm~\ref{alg:subspace_cubic} will be useful in the subsequent proofs.  
\subsection{Proof of Lemma~\ref{lem:intermediate_step}}\label{appen:intermediate_step}

As the first step of proving Lemma~\ref{lem:intermediate_step}, we first present the following lemma, which shows a similar inequality as in \eqref{eq:key_inequality} but only holds for vector $\vs$ in the subspace $\mathcal{K}_m(\mH_k, \vg_k)$.  
\begin{lemma}\label{lem:one_step_krylov}
  For any $\vs \in \mathcal{K}_m(\mH_k, \vg_k)$, we have 
  \begin{equation*}
      f(\vx_{k+1}) \leq f(\vx_k) + \vg_k^\top \vs + \frac{1}{2}\vs^\top \mH_k\vs  + \frac{M}{6}\| \vs\|^3.
  \end{equation*}
\end{lemma}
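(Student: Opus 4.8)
The plan is to combine two facts: the local quadratic approximation guarantee from Proposition~\ref{prop:taylor}, and the optimality of the subspace update $\vs_k$ as the minimizer of the cubic model restricted to $\calK_m(\mH_k,\vg_k)$. The entire statement falls out of chaining these two together, so the proof will be short; the work is mainly in setting up the two inequalities correctly and in invoking the equivalent formulation of the update rule given in \eqref{eq:Krylov_cubic_newton}--\eqref{eq:Krylov_cubic_newton_2}.

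First I would instantiate Proposition~\ref{prop:taylor} at the pair $\vx' = \vx_k$ and $\vx = \vx_{k+1} = \vx_k + \vs_k$. Writing $\vx_{k+1}-\vx_k = \vs_k$ and recalling that $\vg_k = \nabla f(\vx_k)$ and $\mH_k = \nabla^2 f(\vx_k)$, the proposition yields
\begin{equation*}
    f(\vx_{k+1}) \leq f(\vx_k) + \vg_k^\top \vs_k + \frac{1}{2}\vs_k^\top \mH_k \vs_k + \frac{L_2}{6}\|\vs_k\|^3.
\end{equation*}
Since by hypothesis $M \geq L_2$ and $\|\vs_k\|^3 \geq 0$, I can replace $L_2$ by $M$ in the cubic term, obtaining an upper bound on $f(\vx_{k+1})$ in terms of the cubic model evaluated at $\vs_k$ itself.

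Next I would use the defining optimality of $\vs_k$. By the equivalent form of Algorithm~\ref{alg:subspace_cubic} recorded in \eqref{eq:Krylov_cubic_newton}, the vector $\vs_k$ minimizes the objective $\vg_k^\top \vs + \frac{1}{2}\vs^\top \mH_k \vs + \frac{M}{6}\|\vs\|^3$ over all $\vs \in \calK_m(\mH_k,\vg_k)$. Hence for every $\vs \in \calK_m(\mH_k,\vg_k)$,
\begin{equation*}
    \vg_k^\top \vs_k + \frac{1}{2}\vs_k^\top \mH_k \vs_k + \frac{M}{6}\|\vs_k\|^3 \leq \vg_k^\top \vs + \frac{1}{2}\vs^\top \mH_k \vs + \frac{M}{6}\|\vs\|^3.
\end{equation*}
Combining this with the bound from the previous step gives exactly the claimed inequality for all $\vs$ in the subspace.

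The main thing to be careful about is not a deep obstacle but a bookkeeping point: the minimization in \eqref{eq:Krylov_cubic_newton} is over the subspace $\calK_m(\mH_k,\vg_k)$ rather than over $\reals^m$, so I must justify that the algorithm's coordinate-level update in terms of $\vz_k$ and $\mV_k$ coincides with this subspace minimization. This is precisely the correspondence $\vs = \mV_k\vz$ with $\|\vz\| = \|\vs\|$ (from orthonormality of the columns of $\mV_k$) noted just before Appendix~\ref{appen:intermediate_step}, which I would cite directly. Once that identification is in place, the comparison inequality holds for every competitor $\vs$ in the subspace, and no further estimates are needed.
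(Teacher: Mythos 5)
Your proof is correct and follows essentially the same route as the paper: apply Proposition~\ref{prop:taylor} at $\vx'=\vx_k$, $\vx=\vx_{k+1}$, use $M\geq L_2$ to pass to the $M$-regularized cubic model at $\vs_k$, and then invoke the optimality of $\vs_k$ over $\calK_m(\mH_k,\vg_k)$ via the reformulation \eqref{eq:Krylov_cubic_newton}. The bookkeeping point you flag about identifying the $\vz$-level update with the subspace minimization is precisely the justification the paper records at the start of its appendix, so nothing is missing.
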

\begin{proof}
  Since $M \geq L_2$ and $\vs_k = \vx_{k+1}-\vx_k$, 
  by applying Proposition~\ref{prop:taylor} with $\vx'=\vx_k$ and $\vx = \vx_{k+1}$ we have 
  \begin{equation*}
      f(\vx_{k+1}) \leq f(\vx_k) + \vg_k^\top \vs_k + \frac{1}{2}\vs_k^\top \mH_k\vs_k  + \frac{M}{6}\| \vs_k\|^3.
  \end{equation*}
  According to \eqref{eq:Krylov_cubic_newton}, $\vs_{k}$ is chosen as the minimizer of the cubic subproblem over the subspace $\mathcal{K}_m(\mH_k, \vg_k)$, which means that 
  $\vg_k^\top \vs_k + \frac{1}{2}\vs_k^\top \mH_k \vs_k + \frac{M}{6}\|\vs_k\|^3 \leq \vg_k^\top \vs + \frac{1}{2}\vs^\top \mH_k \vs + \frac{M}{6}\|\vs\|^3 $ for any $\vs \in \mathcal{K}_m(\mH_k, \vg_k)$. Hence, the result immediately follows. 
\end{proof}

Now we are ready to prove Lemma~\ref{lem:intermediate_step}. 
\begin{proof}[Proof of Lemma~\ref{lem:intermediate_step}] 
  Recall that $\mP_k^{(j)} = \mV_k^{(j)} \mV_k^{(j)\top}$ is the orthogonal projection matrix of the subspace $\mathcal{K}_j(\mH_k,\vg_k)$.
  For any given $\vs \in \mathbb{R}^d$, define $\vs^{(j)} = \mP_k^{(j)} \vs$ for $j=1,2,\dots,m$. Since we have $\vs^{(j)} \in \mathcal{K}_j(\mH_k, \vg_k) \subset \mathcal{K}_m(\mH_k, \vg_k)$ for any $j\leq m$, 
  by applying Lemma~\ref{lem:one_step_krylov} with $\vs = \vs^{(j)}$, we get 
  \begin{align*}
    f(\vx_{k+1}) &\leq f(\vx_k) +  \vg_k^\top \vs^{(j)} + \frac{1}{2}(\vs^{(j)})^\top \mH_k \vs^{(j)} + \frac{M}{6}\| \vs^{(j)}\|^3 \\
    &= f(\vx_k) + \vg_k^\top \mP_k^{(j)}\vs + \frac{1}{2}(\mP_k^{(j)} \vs )^\top\mH_k \mP_k^{(j)} \vs + \frac{M}{6}\| \mP_k^{(j)} \vs\|^3.
\end{align*}
  Moreover, since $\vg_k \in \calK_{j}(\mH_k,\vg_k)$ for all $j\geq 0$, we have $ \mP_k^{(j)}\vg_k = \vg_k$. Also, since $\mP_k^{(j)}$ is an orthogonal projection matrix, we have $\|\mP_k^{(j)} \vs\| \leq \|\vs\|$ for any $\vs\in \mathbb{R}^d$. 
  Thus, we get 
  \begin{align}
    f(\vx_{k+1}) &\leq f(\vx_k) +  \vg_k^\top \vs + \frac{1}{2}(\mP_k^{(j)} \vs )^\top\mH_k \mP_k^{(j)} \vs + \frac{M}{6}\| \vs\|^3 \nonumber \\
    &=  f(\vx_k) +  \vg_k^\top \vs + \frac{1}{2}\vs^\top \mH_k \vs + \frac{M}{6}\| \vs\|^3 +\frac{1}{2}\left((\mP_k^{(j)} \vs )^\top\mH_k \mP_k^{(j)} \vs - \vs^\top \mH_k \vs \right). \label{eq:intermediate}
  \end{align}
  Since \eqref{eq:intermediate} holds for any $j\in \{1,2,\dots,m\}$, we obtain the desired result by taking the minimum of the right-hand side over $j=1,\dots,m$.  
\end{proof}

\subsection{Proof of Lemma~\texorpdfstring{\ref{lem:bessel_main}}{3}}
Since $f$ is convex by Assumption~\ref{assum:convex}, we have $\mH_k \succeq 0$. Thus, Lemma~\ref{lem:bessel_main} immediately follows from Lemma~\ref{lem:bessel} by setting $\mA = \mH_k$ and $\vb = \vg_k$.  

\subsection{Proof of Theorem~\ref{thm:main}}\label{appen:main_theorem}
Our starting point is the inequality in \eqref{eq:key_inequality_krylov}. By choosing $\vs = 0$, we immediately obtain that $f(\vx_{k+1}) \leq f(\vx_k)$, which implies $f(\vx_k) \leq f(\vx_0)$ for all $k\geq 0$. Thus, $\vx_{k}$ is in the level-set $\{\vx\in \mathbb{R}^d: f(\vx) \leq f(\vx_0)\}$ and hence $\|\vx_k-\vx^*\| \leq D$ according to \eqref{eq:def_D}. Moreover,
by Proposition~\ref{prop:taylor}, it holds that $
    |f(\vx_k +\vs)- f(\vx_k) - \vg_k^\top \vs - \frac{1}{2}  \vs^\top \mH_k \vs| \leq \frac{L_2}{6}\|\vs\|^3$. 
Thus, by denoting $\rho^{(m)}(\mH_k,\vg_k)$ by $\rho^{(m)}_k$, together with \eqref{eq:key_inequality_krylov} we can get 
\begin{equation}\label{eq:proximal_type_bound}
    f(\vx_{k+1}) \leq f(\vx_k+ \vs) + \frac{\rho^{(m)}_k}{m}\|\vs\|^2 + \frac{L_2 + M}{6}\|\vs\|^3, \quad \forall \vs \in \mathbb{R}^d.  
\end{equation}
Now define two auxiliary sequences $\{a_k\}_{k\geq 0}$ and $\{A_k\}_{k\geq 0}$ by $a_k = k^2$, $A_0 = 0$ and $A_k = A_{k-1} + a_k$. By choosing $\vs = \frac{a_{k+1}}{A_{k+1}}(\vx^*-\vx_k)$ in \eqref{eq:proximal_type_bound}, we get 
\begin{align}
    f(\vx_{k+1}) &\leq f\left(\frac{A_k}{A_{k+1}}\vx_k + \frac{a_{k+1}}{A_{k+1}}\vx^*\right) + \frac{\rho^{(m)}_k}{m} \frac{a_{k+1}^2}{A_{k+1}^2}\|\vx^*-\vx_k\|^2 + \frac{L_2 + M}{6} \frac{a_{k+1}^3}{A_{k+1}^3}\|\vx^*-\vx_k\|^3 \\
    &\leq \frac{A_k}{A_{k+1}}f(\vx_k) + \frac{a_{k+1}}{A_{k+1}} f(\vx^*) + \frac{\rho^{(m)}_k}{m} \frac{a_{k+1}^2}{A_{k+1}^2}\|\vx^*-\vx_k\|^2 + \frac{L_2 + M}{6} \frac{a_{k+1}^3}{A_{k+1}^3}\|\vx^*-\vx_k\|^3,
\end{align}
where we used Jensen's inequality in the last inequality. By multiplying both sides by $A_{k+1}$ and {using the upper bound $\|\vx_k - \vx^*\| \leq D$}, we obtain 
\begin{equation*}
    A_{k+1}\left(f(\vx_{k+1}) - f(\vx^*)\right) \leq A_k(f(\vx_k)-f(\vx^*)) + \frac{\rho^{(m)}_k}{m} \cdot \frac{a_{k+1}^2}{A_{k+1}} D^2 + \frac{L_2 + M}{6} \cdot\frac{a_{k+1}^3}{A_{k+1}^2}D^3. 
\end{equation*}
Note that $A_{k+1} = \sum_{j=1}^{k+1} a_j = \sum_{j=1}^{k+1} j^2 \geq (k+1)^3/3$ and $a_{k+1} = (k+1)^2$. Hence, we further have 
\begin{equation*}
    A_{k+1}\left(f(\vx_{k+1}) - f(\vx^*)\right) \leq A_k(f(\vx_k)-f(\vx^*)) + \frac{3\rho^{(m)}_k}{m} (k+1) D^2 + \frac{3}{2}(L_2+M)D^3. 
\end{equation*}
By unrolling the above inequality, we obtain 
\begin{equation*}
    A_k(f(\vx_k)-f(\vx^*)) \leq \frac{3D^2}{m} \sum_{j=0}^{k-1} (j+1)\rho^{(m)}_j + \frac{3}{2}(L_2+M) D^3 k.
\end{equation*}
Finally, by using $\rho^{(m)}_j \leq \rho^{(m)}_{\max}$ and $A_k \geq k^3/3$, this implies that
\begin{equation*}
    f(\vx_k)-f(\vx^*) \leq \frac{9\rho^{(m)}_{\max}D^2}{2m}\left(\frac{1}{k}+\frac{1}{k^2}\right) + \frac{9(L_2+M)D^3}{2k^2}.
\end{equation*}

\subsection{Proof of Theorem~\ref{thm:strongly-convex}}\label{appen:strongly-convex}
 To simplify the notation, we define  $T^{(m)}_M:\;\mathbb{R}^d \rightarrow \mathbb{R}^d$ as the operator that maps the current iterate $\vx_k$ to the next iterate $\vx_{k+1}$ in Algorithm~\ref{alg:subspace_cubic}. Formally, from \eqref{eq:Krylov_cubic_newton} and \eqref{eq:Krylov_cubic_newton_2} we have  
\begin{equation*}
    T^{(m)}_M(\vx) =  \vx + \argmin_{\vs \in \calK_m(\nabla^2 f(\vx),\nabla f(\vx))} \Bigl\{ \langle \nabla f(\vx), \vs \rangle + \frac{1}{2}\vs^\top  \nabla^2 f(\vx) \vs + \frac{M}{6}\|\vs\|^3 \Bigr\}.
\end{equation*}
Thus, Algorithm~\ref{alg:subspace_cubic} can be equivalently written as $\vx_{k+1} = T^{(m)}_M(\vx_k)$ for $k\geq 0$. 

To prove Theorem~\ref{thm:strongly-convex}, we artificially divide the iterates of Algorithm~\ref{alg:subspace_cubic} into different stages and use $\vx_{i,t}$ to denote the $t$-th iterate in the $i$-th stage. Also, the $i$-th stage has length $k_i$ which we will specify later. Formally, in the first stage, we set $\vx_{1,0} = \vx_0$ and let $\vx_{1,t+1} = T_M^{(m)}(\vx_{1,t})$ for $0\leq t \leq {k_1-1}$. Subsequently, in the $i$-th stage ($i\geq 2$), we set the initial iterate $\vx_{i,0}$ as the last iterate of the previous stage $\vx_{i-1,k_{i-1}}$ and let $\vx_{i,t+1} = T_M^{(m)}(\vx_{i,t})$ for $0\leq t \leq {k_i-1}$.
Note that we have not altered the algorithm and the above procedure is exactly equivalent to Algorithm~\ref{alg:subspace_cubic}; The different stages are introduced solely for the purpose of analysis.   

We choose the length $k_i$ of the $i$-th stage such that the suboptimality gap halves after every stage, that is, $f(\vx_{i+1,0})-f(\vx^*) \leq \frac{1}{2}(f(\vx_{i,0})-f(\vx^*))$ for $i\geq 0$. In particular, we claim that it is sufficient to choose 
\begin{equation*}
  k_i \!=\!\! \left\lceil \frac{72 \rho^{(m)}_{\max}}{m \mu} + \frac{6 \cdot 2^{\frac{1}{4}}(L_2+M)^{\frac{1}{2}} \left( {f(\vx_{i,0}) - f(\vx^*)}\right)^{\frac{1}{4}}}{\mu^{\frac{3}{4}}} \right\rceil \!\leq\! \frac{72 \rho^{(m)}_{\max}}{m \mu} + \frac{8(L_2+M)^{\frac{1}{2}} \left( {f(\vx_{i,0}) - f(\vx^*)}\right)^{\frac{1}{4}}}{\mu^{\frac{3}{4}}} +1. \!
\end{equation*}

Let $D_i = \sup \{\|\vx-\vx^*\|:\; \vx\in \reals^d, \;f(\vx) \leq f(\vx_{i,0})\}$.  
Note that for any $\vx$ satisfying $f(\vx) \leq f(\vx_{i,0})$, by using strong convexity we can bound $ \frac{\mu}{2}\|\vx-\vx^*\|^2 \leq f(\vx)-f(\vx^*) \leq f(\vx_{i,0})-f(\vx^*)$. This implies that  $\|\vx-\vx^*\| \leq \sqrt{\frac{2(f(\vx_{i,0})-f(\vx^*))}{\mu}}$ and thus we have $D_i \leq  \sqrt{\frac{2(f(\vx_{i,0})-f(\vx^*))}{\mu}}$. Hence, by applying Theorem~\ref{thm:main}, we can bound that 
\begin{align*}
  f(\vx_{i,k_i})-f(\vx^*) &\leq \frac{9 \rho^{(m)}_{\max{}} D_i^2}{2m} \left(\frac{1}{k_i}+\frac{1}{k_i^2}\right) + \frac{9(L_2+M)D_i^3}{2k_i^2} \\
   &\leq \frac{9\rho^{(m)}_{\max}}{m \mu}\left( f(\vx_{i,0}) - f(\vx^*) \right) \left(\frac{1}{k_i}+\frac{1}{k_i^2}\right) + \frac{9\sqrt{2}(L_2+M)}{k_i^2} \left( \frac{f(\vx_{k,0}) - f(\vx^*)}{\mu} \right)^{\frac{3}{2}}.
\end{align*}
Since we choose $k_i$ such that $k_i \geq \frac{72 \rho^{(m)}_{\max}}{m \mu}$, we have  
\begin{equation*}
  \frac{9\rho^{(m)}_{\max}}{m \mu}\left( f(\vx_{i,0}) - f(\vx^*) \right) \left(\frac{1}{k_i}+\frac{1}{k_i^2}\right) \leq  \frac{18\rho^{(m)}_{\max}}{m \mu k_i}\left( f(\vx_{i,0}) - f(\vx^*) \right) \leq \frac{1}{4}\left( f(\vx_{i,0}) - f(\vx^*) \right).
\end{equation*}
Moreover, since $k_i \geq 6 \cdot 2^{\frac{1}{4}}\frac{(L_2+M)^{\frac{1}{2}} \left( {f(\vx_{i,0}) - f(\vx^*)}\right)^{\frac{1}{4}}}{\mu^{\frac{3}{4}}}$, we also have 
\begin{equation*}
  \frac{9\sqrt{2}(L_2+M)}{k_i^2} \left( \frac{f(\vx_{i,0}) - f(\vx^*)}{\mu} \right)^{\frac{3}{2}} \leq \frac{1}{4}\left( f(\vx_{i,0}) - f(\vx^*) \right).
\end{equation*}
By combining the two, we conclude that $f(\vx_{i+1,0})-f(\vx^*) = f(\vx_{i,k_i})-f(\vx^*) \leq \frac{1}{2}\left( f(\vx_{i,0}) - f(\vx^*) \right)$. 
By induction, we have $ f(\vx_{i,0})-f(\vx^*) \leq (f(\vx_0)-f(\vx^*))/2^{i-1} = {\delta_0}/2^{i-1}$. Hence, after at most $i^* = \lceil \log_2(\frac{\delta_0}{\epsilon})+1 \rceil$ stages, we have $f(\vx_{i^*,0}) \leq \epsilon$. Furthermore, the total number of iterations can be bounded by 
\begin{align*}
  \sum_{i=1}^{i^*-1} k_i &\leq \frac{72 \rho^{(m)}_{\max}}{m \mu} \left(\log_2\left(\frac{\delta_0}{\epsilon}\right)+1\right) + \log_2\left(\frac{\delta_0}{\epsilon}\right)+1 + \sum_{i=1}^{\infty}\frac{8(L_2+M)^{\frac{1}{2}} \left( {f(\vx_{i,0}) - f(\vx^*)}\right)^{\frac{1}{4}}}{\mu^{\frac{3}{4}}} \\ 
  &\leq \frac{72 \rho^{(m)}_{\max}}{m \mu} \left(\log_2\left(\frac{\delta_0}{\epsilon}\right)+1\right) + \log_2\left(\frac{\delta_0}{\epsilon}\right)+1 + \frac{8(L_2+M)^{\frac{1}{2}} \left( {f(\vx_{0}) - f(\vx^*)}\right)^{\frac{1}{4}}}{(1-2^{-\frac{1}{4}})\mu^{\frac{3}{4}}}. 
\end{align*}

\section{Upper Bounds on \texorpdfstring{$\rho^{(m)}$}{rho}} \label{appen:rho_m}

In this section, we first present the proof of Lemma~\ref{lem:matrix_polynomial}, which relates $\rho^{(m)}(\mA,\vb)$ to matrix polynomials. Then in the subsequent sections, we will use this lemma to derive different bounds on $\rho^{(m)}(\mA,\vb)$ by choosing different polynomials. 
As it will play an important role in our proof, we first briefly recap the definition of Chebyshev polynomials and present some useful properties \citep[Section 4.4]{saad2011numerical}. 

The Chebyshev polynomial of the first kind of degree $k$ can be defined by 
\begin{equation*}
  T_k(x) = \cos(k \cos^{-1}x).
\end{equation*}
Equivalently, it can also be defined via the following recurrence relation: 
\begin{equation*}
  T_0(x) = 1, \; T_1(x) = x, \; T_{k+1}(x) = 2x T_k(x) - T_{k-1}(x) \quad \forall k \geq 1.
\end{equation*}
For convenience, we list some useful properties of the Chebyshev polynomials in the following proposition.  
\begin{proposition}\label{prop:chebyshev}
  The following statements hold true. 
  \begin{enumerate}[(i),leftmargin=1cm]
    \item $|T_k(x)| \leq 1$ for all $x\in [-1,1]$ and $k \geq 0$.
    \item For $k\geq 1$, the leading coefficient of $T_k(x)$ is $2^{k-1}$. 
  \end{enumerate}
\end{proposition}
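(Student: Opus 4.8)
The plan is to establish the two parts independently, using the trigonometric definition for (i) and the recurrence for (ii). For part (i), I would take any $x \in [-1,1]$ and write $x = \cos\theta$ with $\theta = \cos^{-1}x \in [0,\pi]$, which is legitimate precisely because the argument lies in $[-1,1]$. Then by definition $T_k(x) = \cos(k\theta)$, and since $|\cos(\cdot)| \le 1$ holds for every real argument, we get $|T_k(x)| = |\cos(k\theta)| \le 1$ for all $k \ge 0$. This is the entire content of (i); no induction is needed.

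For part (ii), I would proceed by induction on $k$, carrying the joint hypothesis that $T_k$ is a polynomial of degree exactly $k$ whose leading coefficient equals $2^{k-1}$ (for $k \ge 1$). The base cases $T_1(x) = x$ and $T_2(x) = 2xT_1(x) - T_0(x) = 2x^2 - 1$ are verified directly, giving leading coefficients $2^0$ and $2^1$ respectively. For the inductive step, assuming the claim for $T_{k-1}$ and $T_k$, I would apply the recurrence $T_{k+1}(x) = 2x\,T_k(x) - T_{k-1}(x)$: the product $2x\,T_k(x)$ contributes a term of degree $k+1$ with coefficient $2 \cdot 2^{k-1} = 2^{k}$, while $T_{k-1}$ has degree only $k-1$ and therefore cannot affect the top-degree term. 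Hence $T_{k+1}$ has degree $k+1$ and leading coefficient $2^{k} = 2^{(k+1)-1}$, closing the induction.

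The only real subtlety — and it is pure bookkeeping rather than a genuine obstacle — is that to guarantee the subtracted term $T_{k-1}$ does not interfere with the leading coefficient of $T_{k+1}$, I must track the \emph{degree} of each Chebyshev polynomial alongside its leading coefficient. This is why the induction hypothesis should bundle both statements together; tracking the coefficient alone would not suffice to rule out cancellation. Note that neither part invokes the equivalence between the trigonometric and recursive definitions beyond what the proposition already takes for granted: part (i) uses only the closed form $T_k(x) = \cos(k\cos^{-1}x)$, and part (ii) uses only the three-term recurrence.
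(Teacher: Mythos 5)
Your proof is correct: part (i) follows immediately from the trigonometric definition since $|\cos(k\theta)|\leq 1$, and part (ii) is a clean two-base-case induction on the three-term recurrence in which you rightly bundle the degree claim with the leading-coefficient claim so that the subtracted $T_{k-1}$ term cannot interfere. The paper itself offers no proof of this proposition---it records these as standard facts with a citation to \citet[Section 4.4]{saad2011numerical}---so your argument simply supplies the routine verification the paper omits, and there is nothing to reconcile between the two.
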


\subsection{Proof of Lemma~\texorpdfstring{\ref{lem:matrix_polynomial}}{4}}

By items (i) and (iii) in Proposition~\ref{lem:beta_product}, we can write  
\begin{equation*}
  \rho^{(m)}(\mA,\vb) = \left(\prod_{j=1}^m \beta_{j+1}\right)^{\frac{1}{m}} = \min_{\vu \in \mathcal{K}_m(\mA,\vb)} \|\mA^{m} \vv_1 -\vu\|^{\frac{1}{m}},
\end{equation*}
where we recall that $\vv_1 = \vb/\|\vb\|$. 
Moreover, for any $\vu \in \calK_m(\mA,\vb)$, by definition, there exist $c_0,\dots,c_{m-1} \in \reals$ such that 
$$\vu = \sum_{j=0}^{m-1} c_j {\mA^j\vb} = \left(\sum_{j=0}^{m-1} c_j \mA^j\right){\vb} = \left(-\sum_{j=0}^{m-1} \tilde{c}_j \mA^j\right) \frac{\vb}{\|\vb\|},$$
where we let $\tilde{c}_j = -c_j \|\vb\|$.  
Hence, we have 
$$\mA^{m}\vv_1 -\vu  = \left(\mA^{m} + \sum_{j=0}^{m-1} \tilde{c}_j \mA^j\right)\frac{\vb}{\|\vb\|} = \frac{p(\mA)\vb}{\|\vb\|},$$ 
where $p$ is a monic polynomial of degree $m$ as defined in Lemma~\ref{lem:matrix_polynomial}. 
Thus, minimizing $\vu$ over the subspace $\calK_m(\mA,\vb)$ is equivalent to minimizing $p$ over the set $\mathcal{M}_m$,  which leads to   
\begin{equation*}
    \rho^{(m)}(\mA,\vb) = \min_{p \in \mathcal{M}_m } \left\|p(\mA) \frac{\vb}{\|\vb\|} \right\|^{\frac{1}{m}} .
\end{equation*}
This completes the proof. 
\subsection{Proof of Lemma~\texorpdfstring{\ref{lem:L1_bound}}{2}}\label{appen:chebyshev_1}
Assume that $\mA$ has $r$ distinct eigenvalues $\lambda_1 > \lambda_2 > \dots > \lambda_r$, where $r\leq d$. As discussed in Section~\ref{subsec:hessian_structure}, for any monic polynomial $\hat{p} \in \mathcal{M}_m$, by using Lemma~\ref{lem:matrix_polynomial} we can bound 
\begin{equation}\label{eq:polynomial_boun d_rho}
  \rho^{(m)}(\mA,\vb) \leq \min_{p \in \mathcal{M}_m } \left\|p(\mA) \right\|_{\op}^{\frac{1}{m}} \leq \min_{{p \in \mathcal{M}_m} }\max_{i\in\{1,2,\dots,r\}} \; |p(\lambda_i) |^{\frac{1}{m}} \leq \max_{i\in\{1,2,\dots,r\}} \; |\hat{p}(\lambda_i) |^{\frac{1}{m}}.
\end{equation} 
Since $0 \preceq \mA \preceq L_1 \mI$, we have $0\leq \lambda_i \leq L_1$ for all $i=1,\dots,r$. Now we will choose the polynomial $\hat{p}$ as 
\begin{equation*}
  \hat{p}(\lambda) =  2^{-m+1}\left(\frac{L_1}{2}\right)^m T_m\left(\frac{2\lambda}{L_1}-1\right) = 2\left(\frac{L_1}{4}\right)^m T_m\left(\frac{2\lambda}{L_1}-1\right). 
\end{equation*}
By item (ii) in Proposition~\ref{prop:chebyshev}, we can verify that $\hat{p}$ is indeed a monic polynomial of degree $m$. Moreover, for any $\lambda \in [0,L_1]$, we can obtain from item (i) in Proposition~\ref{prop:chebyshev} that $|\hat{p}(\lambda)| \leq 2\left({L_1}/{4}\right)^m$. hence, we conclude that $\rho^{(m)}(\mA,\vb) \leq 2^{1/m} L_1/4$. 
\subsection{Proof of Lemma~\texorpdfstring{\ref{lem:eigen_decay}}{5}}
\label{appen:eigen_decay}
Assume that $\mH$ has $r$ distinct eigenvalues $\lambda_1 > \lambda_2 > \dots > \lambda_r$, where $r\leq d$.
We follow similar arguments as in Section~\ref{appen:chebyshev_1} but choose the polynomial $\hat{p}$ differently.  Specifically, in the first case when $m <r$, we let 
\begin{equation*}
  \hat{p}(\lambda) = 
    \prod_{j=1}^{m} (\lambda - \lambda_j).
\end{equation*}
It is easy to verify that $\hat{p}$ is a monic polynomial of degree $m$ with its zeros at $\lambda_1,\dots,\lambda_m$. Thus, we immediately get $\hat{p}(\lambda_i) = 0$ for $i\in\{1,2,\dots,m\}$. On the other hand, for any $i\in\{m+1,m+2,\dots, r\}$, we have $0 \leq \lambda_i \leq \lambda_{m}$ and thus $\hat{p}(\lambda_i) \leq \prod_{j=1}^{m} |\lambda_i - \lambda_j| \leq \prod_{j=1}^{m} \lambda_j$. Hence, by \eqref{eq:polynomial_boun d_rho} we conclude that $\rho^{(m)}(\mH,\vg) \leq \left(\prod_{j=1}^{m} \lambda_j \right)^{\frac{1}{m}}$. 

In the second case when $m \geq r$, we can let 
\begin{equation*}
  \hat{p}(\lambda) = 
    (\lambda - \lambda_1)^{m-r}\prod_{j=1}^{r} (\lambda - \lambda_j).
\end{equation*} 
We can observe that $\hat{p}$ is a monic polynomial of degree $m$ and it vanishes at all $\lambda_i$ for $i=1,2,\dots,r$. Thus, we conclude that $\rho^{(m)}(\mH,\vg) = 0$. 

{Finally, we prove our claim in Remark~\ref{rem:maximal_Krylov}. We first make a simple observation that $\mH^m \vg \in \mathcal{K}_m(\mH,\vg)$ if and only if $m \geq r_0$, where we recall $r_0$ denotes the dimension of the maximal Krylov subspace. This is because, by definition, we have $\mathcal{K}_m(\mH,\vg) \subsetneq \mathcal{K}_{m+1}(\mH,\vg)$ for $m < r_0$ and $\mathcal{K}_m(\mH,\vg) = \mathcal{K}_{m+1}(\mH,\vg)$ for $m \geq r_0$. To complete the proof,  Lemma~\ref{lem:matrix_polynomial} implies that $\rho^{(m)}(\mH,\vg)=0$ if and only if there exists a polynomial $p(x) = x^m+\sum_{i=0}^{m-1} c_i x^i \in \mathcal{M}_m$ such that $p(\mH)\vg = 0$, i.e., $\mH^m \vg = -\sum_{i=0}^{m-1} c_i\mH^i \vg$. Moreover, this is exactly equivalent to the condition that $\mH^m\vg \in \mathcal{K}_m(\mH,\vg)$, and hence by our initial observation above we have $\rho^{(m)}(\mH,\vg)=0$ if and only if $m\geq r_0$.   }

\subsection{Proof of Lemma~\texorpdfstring{\ref{lem:cluster}}{6}}
Inspired by \citet{goujaud2022super}, we choose the polynomial $\hat{p}$ as   
\begin{equation*}
  \hat{p}(\lambda) = 2^{1-\frac{m}{2}}\left(\frac{\Delta(L_1-\Delta)}{2}\right)^{\frac{m}{2}} T_{m/2}(\sigma(\lambda)), \quad \text{where }\sigma(\lambda) = 1-\frac{2}{\Delta (L_1-\Delta)}\lambda (L_1-\lambda).
\end{equation*} 
We can verify that this is a monic polynomial of degree $m$. Moreover, for any $\lambda \in [0,\Delta] \cup [L_1-\Delta, L_1]$, it holds that $\sigma(\lambda) \in [-1,1]$. Hence, using item (i) in Proposition~\ref{prop:chebyshev}, for any $i=1,2,\dots,r$ we obtain that $|T_{m/2}(\sigma(\lambda_i))| \leq 1$,  which further implies that $|\hat{p}(\lambda)| \leq 2^{1-\frac{m}{2}}\left(\frac{\Delta(L_1-\Delta)}{2}\right)^{\frac{m}{2}}$. Thus, by \eqref{eq:polynomial_boun d_rho} we get $\rho^{(m)}(\mH,\vg) \leq 2^{1/m}\sqrt{\Delta(L_1-\Delta)}/2$.

\section{Experiment Details}

In the experiments, we focus on the logistic regression problem with LIBSVM datasets \citep{chang2011libsvm}. Specifically, consider a dataset $\{(\va_j,b_j)\}_{j=1}^{n}$ of $n$ points, where $\va_j \in \reals^d$ is the $j$-th feature vector and $b_j \in \{0,1\}$ is the $j$-th binary label. 
Then, the logistic loss function is given by 
$f(\vx)= \frac{1}{n}\sum_{j=1}^n \left((1-b_j) \va_j^\top \vx + \log(1+e^{- \va_j^\top \vx}) \right)$. We tested the CRN method \citep{nesterov2006cubic}, the SSCN method \citep{Hanzely2020}, and our proposed Krylov CRN method (Algorithm~\ref{alg:subspace_cubic}). 
In the following sections, we further describe their implementation details.

\subsection{Cubic Regularized Newton}

\begin{algorithm}[t!]
  \caption{Cubic regularized Newton with line search}\label{alg:cubic_LS}
  \begin{algorithmic}[1]
  \State \textbf{Input:}  Initial point $\vx_0 \in \mathbb{R}^d$, initial regularization parameter $R_0>0$, line search parameter $\beta \in (0,1)$
  \For{$k=0,1,\dots,$}
  \State Let $M_k$ be the smallest number in $\{R_k \beta^{-i}:i\geq 0\}$ such that 
  \begin{align*}
    \vs_{k} & = \argmin_{\vs \in \mathbb{R}^d} \Bigl\{ \vg_k^\top \vs + \frac{1}{2}\vs^\top \mH_k \vs + \frac{M_k}{6}\|\vs\|^3 \Bigr\}, \\ 
    \vx_{k+1} & = \vx_k + \vs_k, \\
     f(\vx_{k+1}) &\leq f(\vx_k) +  \vg_k^\top \vs  + \frac{1}{2}\vs^\top \mH_k \vs + \frac{M_k}{6}\|\vs\|^3. 
\end{align*}
\vspace{-1em}
  \State Set $R_{k+1} \leftarrow \beta M_k$
  \EndFor
  \end{algorithmic}
  \end{algorithm}

  \begin{algorithm}[t!]
    \caption{Stochastic Subspace Cubic Newton with line search}\label{alg:SSCN_LS}
    \begin{algorithmic}[1]
    \State \textbf{Input:}  Initial point $\vx_0 \in \mathbb{R}^d$, subspace dimension $m$, initial regularization parameter $R_0>0$, line search parameter $\beta \in (0,1)$
    \For{$k=0,1,\dots,$}
    \State Sample $m$ coordinates uniformly and randomly and let $I_k$ be the set of sampled indices
    \State Compute the subspace gradient $\tilde{\vg}_k$ and the subspace Hessian $\tilde{\mH}_k$
    \State  
    Let $M_k$ be the smallest number in $\{R_k \beta^{-i}:i\geq 0\}$ such that 
      \begin{align*}
          \vz_{k} &= \argmin_{\vz \in \mathbb{R}^m}\; \Bigl\{ \tilde{\vg}_k^\top \vz  + \frac{1}{2} \vz^\top \tilde{\mH}_k \vz  + \frac{M_k}{6}\|\vz\|^3 \Bigr\}, \\
          \vx_{k+1}[I_k] &= \vx_k[I_k] +  \vz_k, \\
          f(\vx_{k+1}) &\leq f(\vx_k) +  \vg_k^\top \vs  + \frac{1}{2}\vs^\top \mH_k \vs + \frac{M_k}{6}\|\vs\|^3.
      \end{align*}
      \vspace{-1em}
      \State Set $R_{k+1} \leftarrow \beta M_k$
    \EndFor
    \end{algorithmic}
    \end{algorithm}

    \begin{algorithm}[t!]
      \caption{Krylov cubic regularized Newton with line search}\label{alg:krylov_cubic_LS}
      \begin{algorithmic}[1]
      \State \textbf{Input:}  Initial point $\vx_0 \in \mathbb{R}^d$, subspace dimension $m$, initial regularization parameter $R_0>0$, line search parameter $\beta \in (0,1)$
      \For{$k=0,1,\dots,$}
      \State Set $(\mV_k, \tilde{\vg}_k, \tilde{\mH}_k) = \textsc{Lanczos}(\mH_k, \vg_k;m)$ %
      \State 
      Let $M_k$ be the smallest number in $\{R_k \beta^{-i}:i\geq 0\}$ such that 
      \begin{align*}
          \vz_{k} &= \argmin_{\vz \in \mathbb{R}^m}\; \Bigl\{ \tilde{\vg}_k^\top \vz  + \frac{1}{2} \vz^\top \tilde{\mH}_k \vz  + \frac{M_k}{6}\|\vz\|^3 \Bigr\}, \\
          \vx_{k+1} &= \vx_k + \mV_k \vz_k, \\
          f(\vx_{k+1}) &\leq f(\vx_k) +  \vg_k^\top \vs  + \frac{1}{2}\vs^\top \mH_k \vs + \frac{M_k}{6}\|\vs\|^3.
      \end{align*}
      \vspace{-1em}
      \State Set $R_{k+1} \leftarrow \beta M_k$
      \EndFor
      \end{algorithmic}
      \end{algorithm}
Recall that in the analysis of CRN, the regularization parameter $M$ should satisfy $M \geq L_2$. Since the Lipschitz constant $L_2$ is unknown in practice, we use a backtracking line search scheme to select $M_k$ at the $k$-th iteration, as described in Algorithm~\ref{alg:cubic_LS}. Specifically, at the $k$-th iteration, we iteratively increase the value of the regularization parameter $M_k$ until the key inequality in \eqref{eq:key_inequality} is satisfied. 

To solve the cubic subproblem in \eqref{eq:cubic_newton}, we follow the standard approach in \citet[Section 6.1]{cartis2011adaptive}. As we mentioned in Section~\ref{subsec:subspace_newton}, by using the first-order optimality condition, the cubic subproblem can be reformulated as the nonlinear equation: 
\begin{equation*}
  \lambda = \frac{M}{2}\|(\mH_k + \lambda \mI)^{-1} \vg_k\|.
\end{equation*} 
Thus, we define the univariate function $\phi(\lambda) = \lambda^2 - \frac{M^2}{4}\|(\mH_k + \lambda \mI)^{-1} \vg_k\|^2$ and use the Newton's method to find its unique root $\lambda^*$. Then, we can compute the solution by $\vs_k = -(\mH_k + \lambda^* \mI)^{-1} \vg_k$. Note that at each iteration of Newton's method, we need to solve a linear system of equations in the form of $(\mH_k+\lambda \mI)\vs = -\vg_k$ with a different $\lambda$. When the problem dimension $d$ is small (less than $500$ in our experiment), we can store the Hessian matrix $\mH_k$ and solve the linear system directly by computing the matrix inverse. On the other hand, when~$d$ is large, we need to use the conjugate gradient method to solve the linear system relying on Hessian-vector products. 
\subsection{Stochastic Subspace Cubic Newton}
We implemented the coordinate version of SSCN, where we sample $m$ coordinates uniformly and randomly at each iteration.  
We use a similar backtracking line search scheme in SSCN to select the regularization parameter $M_k$ as shown in Algorithm~\ref{alg:SSCN_LS}. Given an index set $I$, we use $\vx[I]$ to denote the subvector indexed by $I$. 
Moreover, to achieve its best performance, we follow the strategy in \citep[Section 7.1]{Hanzely2020} and store the residuals $\va_j^\top \vx_k$ for each data point $j=1,2,\dots,n$ and at each iteration $k$. As shown in \citet{Hanzely2020}, in this case the computational cost of the subspace gradient and the subspace Hessian is $\bigO(nm)$ and $\bigO(nm^2)$, respectively.  
\subsection{Krylov Cubic Regularized Newton}
To implement our proposed method, we also use a backtracking line search scheme to select the regularization parameter. The full algorithm is given in Algorithm~\ref{alg:krylov_cubic_LS}.

\end{document}